\theoremstyle{plain}
\newtheorem{thm}{Theorem}
\numberwithin{thm}{section}
\newtheorem{prp}[thm]{Proposition}
\newtheorem{crl}[thm]{Corollary}
\theoremstyle{definition}
\newtheorem{dfn}[thm]{Definition}
\newtheorem{rmk}[thm]{Remark}
\numberwithin{equation}{section}
\def\calS{\mathcal{S}}
\newcommand{\Id}{\textup{Id}}
\newcommand{\lap}{\ensuremath{\Delta}}
\newcommand\ep{\epsilon}
\newcommand{\pa}				{\ensuremath{\partial}}
\newcommand{\eq}				{\begin{equation}}
\newcommand{\eqq}				{\end{equation}}
\newcommand{\hw}
			{\sqrt{- \lap}}
\newcommand{\oc}{\mathrm{1c}}
\newcommand{\ocul}{\oc}
\newcommand{\bl}{{\mathrm b}}
\newcommand{\scl}{{\mathrm{sc}}}
\newcommand{\cul}{{\mathrm{cu}}}
\newcommand{\para}{{\mathrm{para}}}
\newcommand{\scH}{{}^\scl H}
\newcommand{\CI}{C^\infty}
\newcommand{\dCI}{\dot C^\infty}
\newcommand{\Tsc}{{}^{\scl}T}
\newcommand{\cF}{\mathcal F}
\newcommand{\cS}{\mathcal S}
\newcommand{\cL}{\mathcal L}
\newcommand{\RR}{\mathbb{R}}
\newcommand{\NN}{\mathbb{N}}
\newcommand{\sphere}{\mathbb{S}}
\newcommand{\supp}{\operatorname{supp}}
\newcommand{\Psisc}{\Psi_\scl}
\newcommand{\Psicu}{\Psi_\cul}
\newcommand{\Psiocu}{\Psi_\ocul}
\newcommand{\Psiocucu}{\Psi_{\ocul,\cul}}
\newcommand{\Psischh}{\Psi_{\scl,\semi,\cF}}
\newcommand{\Psiocuhh}{\Psi_{\ocul,\semi,\cF}}
\newcommand{\Psiscocuhh}{\Psi_{\scl,\ocul,\semi,\cF}}
\newcommand{\semi}{\hbar}
\newcommand{\Psipara}{\Psi_\para}
\newcommand{\Vb}{{\mathcal V}_{\bl}}
\newcommand{\Vcu}{{\mathcal V}_{\cul}}
\newcommand{\Vsc}{{\mathcal V}_{\scl}}
\newcommand{\Vocu}{{\mathcal V}_{\ocul}}
\newcommand{\Vocuh}{{\mathcal V}_{\ocul,\semi}}
\newcommand{\Vocuhh}{{\mathcal V}_{\ocul,\semi,\cF}}
\newcommand{\Hsc}{H_{\scl}}
\newcommand{\Hcu}{H_{\cul}}
\newcommand{\Hocu}{H_{\ocul}}
\newcommand{\Hocuh}{H_{\ocul,\semi}}
\newcommand{\Hocuhh}{H_{\ocul,\semi,\cF}}
\newcommand{\xisc}{\xi_{\scl}}
\newcommand{\etasc}{\eta_{\scl}}
\newcommand{\xiocu}{\xi_{\ocul}}
\newcommand{\etaocu}{\eta_{\ocul}}
\newcommand{\xicu}{\xi_{\cul}}
\newcommand{\etacu}{\eta_{\cul}}
\title{The X-ray transform on asymptotically conic spaces}
\author{Andr\'as Vasy and Evangelie Zachos}
\thanks{The authors gratefully acknowledge support from the National
  Science Foundation under grant number  DMS-1664683 and DMS-1953987.}
\date{\today}
\address{Department of Mathematics, Stanford University, Stanford, CA
94305-2125, U.S.A.}
\email{andras@math.stanford.edu}
\email{ezachos@alumni.stanford.edu}
\subjclass{53C65, 35S05}
\begin{document}
\begin{abstract}
In this paper, partly based on Zachos' PhD thesis, we show that the
geodesic X-ray transform is stably invertible near infinity on a class of
asymptotically conic manifolds which includes perturbations of
Euclidean space. In particular certain kinds of conjugate points are
allowed. Further, under a global convex foliation condition, the
transform is globally invertible.

The key analytic tool, beyond the approach introduced by Uhlmann and Vasy, is the introduction of a new pseudodifferential
operator algebra, which we name the 1-cusp algebra, and its
semiclassical version.
  \end{abstract}

\maketitle

\section{Introduction}

The geodesic X-ray transform $I$ on a 
Riemannian manifold (often with boundary) $(M,g)$, of dimension $n\geq
2$, is a map from a class of functions, such as continuous functions
on $M$, to a corresponding class functions on the space of geodesics:
if $\gamma$ is a geodesic, then
$$
(If)(\gamma) = \int f(\gamma(s))\, ds.
$$
Here one needs to make some assumption on the geometry and the
function $f$ so that the integral makes sense, for instance
ensuring that one integrates over a finite interval or that $f$ decays
sufficiently fast along geodesics.

An important and well-studied question is whether the X-ray transform
is (left-)
invertible. In other words, if $If$ is known, can $f$ be determined?
The answer, as one might expect, depends on $(M,g)$ and the class of
$f$ to be considered. In addition, this problem, or its tensorial version, is the linearization
of the boundary rigidity problem which asks whether
the restriction of the distance function $d_g$ to $\pa M\times\pa M$
determines $g$ up to diffeomorphisms, or if $g$ is in a fixed
conformal class, $g=c^{-2}g_0$, with $g_0$ fixed, whether the same
information determines the conformal factor $c$. (There are also some slightly different
versions of these questions with some additional data.)

A version of this problem was studied already
over a century ago by Herglotz \cite{Herglotz} and Wiechert and
Zoeppritz \cite{Wiechert1907}, in the special case when $M$ is a ball
with a rotationally symmetric metric $g$ and $f$ is also rotationally
invariant. These assumptions make the problem effectively
one-dimensional, yet there is actual geometry involved: they proved
the injectivity of $I$ under an additional assumption which is the
special case of the convex foliation assumption described
below.

The `standard' conjecture in the field is Michel's, namely that
boundary rigidity holds on simple manifolds
\cite{MR636880}. Recall that 
a Riemannian manifold with boundary $(M, g)$ is simple if for any $p \in M$, the
exponential map $\exp_p$ is a diffeomorphism from a neighborhood
of the origin of $T_pM$ and if $\pa M$ is
strictly convex with respect to $g$. There has been much work on this
problem, primarily on compact manifolds. As we shall see below, there
is a significant difference in the two vs.\ higher dimensional cases.
Croke and Otal independently established boundary rigidity in the
two-dimensional non-positively curved case \cite{MR1057248}, \cite{MR1036134},
before Pestov and Uhlmann proved Michel's conjecture in general in two-dimensions \cite{MR2153407}.
In the higher-dimensional setting, 
Stefanov and Uhlmann showed rigidity for metrics close to Euclidean
ones \cite{MR1618347}. Mukhometov showed rigidity for metrics
conformal to the Euclidean metric \cite{MR621466}. There are also
results under symmetry assumptions, while in
\cite{Lassas-Sharafutdinov-Uhlmann:Semiglobal} and
\cite{Burago-Ivanov:Boundary} boundary rigidity is shown when one of
the metrics is close to the Euclidean one, while
\cite{Stefanov-Uhlmann:Boundary} proves a generic result. Newer
developments will be described below.

As a motivation for the current work we recall a result
of Uhlmann and Vasy \cite{Uhlmann-Vasy:X-ray} concerning the local
X-ray transform which introduced a new approach to this inverse
problem. For an open set $O$ in a manifold with boundary, the local transform is the X-ray
transform restricted geodesic segments which are completely in $O$ with endpoints on $\pa M$.

\begin{thm}[Uhlmann and Vasy \cite{Uhlmann-Vasy:X-ray}]\label{thm:U-V:X-ray}
 For compact Riemannian manifolds $(M,g)$ with strictly convex
 boundary, the local geodesic X-ray transform is left-invertible on small
 enough collar neighborhoods of the boundary, and is globally
 left-invertible under a convex foliation assumption.	
\end{thm}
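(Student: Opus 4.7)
The plan is to reproduce Uhlmann--Vasy's microlocal approach. Given $p \in \pa M$, pick a local function $\tilde x$ with $\tilde x(p)=0$, $\{\tilde x = 0\}$ tangent to $\pa M$ at $p$, and with level sets $\{\tilde x = -c\}$ for small $c>0$ forming a strictly convex foliation. Let $\Omega_c := M \cap \{\tilde x > -c\}$ be the resulting lens-shaped region, bounded by a piece of $\pa M$ and by the artificial boundary $\{\tilde x = -c\}$, and set $x := \tilde x + c$, a defining function of this artificial boundary. The goal is to show that, for $c$ small enough, vanishing of $If$ on all geodesic segments lying in $\Omega_c$ with endpoints on $\pa M$ forces $f \equiv 0$ on $\Omega_c$.

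For a parameter $F > 0$ to be taken large, I would introduce an exponentially weighted normal operator schematically of the form
\[
A_F f := e^{-F/x}\, x^{-1} I^* \chi\, I\, \bigl(e^{-F/x} f\bigr),
\]
where $\chi$ is a smooth cutoff supported near the short geodesics tangent to a given level set of $\tilde x$ and $I^*$ is an $L^2$-type backprojection. The central claim to establish is that $x A_F$ lies in Melrose's scattering pseudodifferential algebra $\Psisc(\Omega_c)$ for the boundary defining function $x$. The plan for this identification is via a stationary-phase analysis of the Schwartz kernel in scattering coordinates near the artificial boundary: strict convexity of the $\tilde x$-level sets implies that $\tilde x$ restricted to any short geodesic has a non-degenerate local minimum at the tangential point, which is precisely the non-degeneracy needed to write the kernel as an oscillatory integral of scattering type.

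A principal symbol computation should then show that $x A_F$ is elliptic in $\Psisc$. A scattering parametrix $B_F$ yields $B_F (x A_F) = \Id + R_F$, with $R_F$ a smoothing scattering operator whose norm on weighted Sobolev spaces is $O(F^{-1})$, since the off-tangential errors from the cutoff $\chi$ inherit exponential decay in $F$ from the weights. Taking $F$ large and $c$ correspondingly small absorbs $R_F$ into the identity and yields the left-invertibility estimate $\|f\|_{L^2(\Omega_c)} \leq C \|If\|$, which is the local statement; by compactness of $\pa M$ the smallness of $c$ can be made uniform. The hard part is this symbol computation and membership-in-$\Psisc$ step, which requires a delicate stationary-phase analysis tracking how the cutoff, the exponential weight, and the tangency geometry combine to yield a non-degenerate elliptic scattering symbol.

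For the global statement under a strictly convex foliation $\{\Sigma_t\}_{t\in[0,T]}$ with $\Sigma_0 = \pa M$, I would argue by an open--closed iteration. Let $S$ be the set of $\tau \in [0,T]$ such that $f \equiv 0$ on $\bigcup_{t\le\tau} \Sigma_t$. Then $0 \in S$ trivially, $S$ is closed by a limiting argument, and openness at any $\tau \in S$ follows from the local invertibility result applied to a thin one-sided collar of $\Sigma_\tau$ (which now plays the role of $\pa M$), using that the known vanishing of $f$ on the far side of $\Sigma_\tau$ reduces integrals along short geodesics crossing $\Sigma_\tau$ to the X-ray data of $f$ on the $\Sigma_\tau$-positive side. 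Hence $S = [0,T]$ and $f \equiv 0$ on $M$.
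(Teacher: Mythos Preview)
The paper does not prove this theorem; it is cited from \cite{Uhlmann-Vasy:X-ray}, with the strategy summarized in the introduction. Your outline tracks that strategy (artificial boundary, exponential weight, scattering calculus, stationary phase for the kernel, layer stripping for the global statement), and the global layer-stripping paragraph is correct. Two points in the local argument are substantively wrong, however.

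First, the modified normal operator must be a \emph{conjugation}: it is $e^{-F/x}L\chi I\,e^{F/x}$ (up to sign conventions), not $e^{-F/x}(\cdots)e^{-F/x}$ as you write. The conjugation produces in the Schwartz kernel the factor $\exp\bigl(-F/x(z)+F/x(\gamma(t))\bigr)$, whose interplay with the quadratic behaviour of $x\circ\gamma$ near the tangency point is exactly what makes the kernel conormal in $\Psisc$ and gives an elliptic boundary symbol. With the same sign on both sides the weights multiply to roughly $e^{-2F/x}$ near the diagonal, which vanishes to infinite order at the artificial boundary $x=0$; the resulting operator then has vanishing boundary principal symbol and cannot be elliptic in the scattering sense.

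Second, in Uhlmann--Vasy the parameter $F$ is fixed; the smallness of the parametrix error does \emph{not} come from $F\to\infty$. Ellipticity in $\Psisc$ gives a parametrix with error in $\Psisc^{-\infty,-\infty}$, which is compact but has no reason to be $O(F^{-1})$. The actual mechanism is $c\to 0$: the family of conjugated normal operators extends smoothly down to $c=0$, where the operator is explicitly computable and invertible, and invertibility then persists for small $c$ by continuity. Your $F\to\infty$ heuristic is closer to the semiclassical variant of \cite{Vasy:Semiclassical-X-ray} (which is the route taken in the present paper), but that requires an $h$-dependent cutoff scaling like $\chi(\lambda/h^{1/2})$ and a semiclassical scattering calculus, neither of which you set up.
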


Here the convex foliation assumption is a replacement for the
simplicity condition; at this point the precise relationship between
these is not completely clear. We recall the precise definition: the
assumption is the existence of a $\CI$ function $x$ with
non-vanishing differential which is strictly concave from the side of
the super-level sets, i.e.\ for all geodesics $\gamma$,
$$
\frac{d(x\circ\gamma)}{ds}(s_0)=0\Rightarrow \frac{d^2(x\circ\gamma)}{ds^2}(s_0)>0.
$$
This assumption is satisfied, for instance, on domains in simply connected
negatively (non-positively) curved manifolds, with $x$ being the distance from a point
outside the domain, as well as on manifolds without conic points. A
simple modification of the proof allows some singular level sets, like
the radius function from the center of a ball, and then manifolds with
non-negative curvature are also covered, as shown in
\cite{Paternain-Salo-Uhlmann-Zhou:Matrix}. Indeed, the setting of Herglotz \cite{Herglotz} and Wiechert and
Zoeppritz \cite{Wiechert1907} becomes a special case of this setup.

The approach to this theorem was by adding an artificial boundary to
create a collar neighborhood of the actual boundary and showing that
the local geodesic X-ray
transform on this collar neighborhood was an invertible operator in a
particular operator class defined via microlocal analysis, as we explain below. These two authors, along with Stefanov, used this linear result to prove a nonlinear result about metric rigidity:

\begin{thm}[Stefanov, Uhlmann and Vasy \cite{Stefanov-Uhlmann-Vasy:Rigidity-Normal}, see also
  \cite{MR3454376} in the conformal case]
If $(M,g)$ is an $n$-dimensional Riemannian manifold with boundary,
where $n \ge 3$, with strictly convex boundary and a convex foliation,
then if there is another Riemannian metric $\hat g$ on $M$ such that
$\partial M$ is still strictly convex with respect to $\hat g$, and if
$g$ and $\hat g$ have identical boundary distance functions, then they
are the same up to a diffeomorphism fixing the boundary pointwise. 
\end{thm}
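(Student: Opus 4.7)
The plan is to reduce the nonlinear boundary rigidity statement to the injectivity of the geodesic X-ray transform on symmetric 2-tensors and then invoke the tensorial refinement of Theorem \ref{thm:U-V:X-ray}. First I would use strict convexity of $\pa M$ with respect to both metrics to upgrade the equality of boundary distance functions to equality of the lens data near $\pa M$: for sufficiently close boundary points, $d_g$ and $d_{\hat g}$ are both realized by unique short transverse geodesics whose initial and final tangent vectors can be read off from the tangential differentials of the distance function on $\pa M\times\pa M$. A standard boundary determination argument then produces a boundary-fixing diffeomorphism $\phi$ such that after replacing $\hat g$ by $\phi^*\hat g$ one has $\hat g = g$ to infinite order along $\pa M$; both metrics may then be extended smoothly across $\pa M$ so that they agree outside of $M$.

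Second, following the pseudo-linearization introduced by Stefanov and Uhlmann, I would compare the Hamiltonian flows of $g$ and $\hat g$ and show that the difference $f := g - \hat g$, viewed as a symmetric 2-tensor, satisfies
$$
\int_\gamma f_{ij}(\gamma(s))\,\dot\gamma^i(s)\dot\gamma^j(s)\,ds = R(f)(\gamma)
$$
for every short $g$-geodesic $\gamma$ with endpoints on $\pa M$, where $R(f)$ is quadratic in $f$ and controllable in the relevant Sobolev norms. After imposing the solenoidal gauge with respect to $g$, which is a choice that does not change the isometry class, this identity places the solenoidal part $f^s$ in the kernel of the geodesic X-ray transform on symmetric 2-tensors $I_2$, modulo an absorbable nonlinear remainder.

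Third, I would invoke the symmetric 2-tensor version of Theorem \ref{thm:U-V:X-ray}: under the convex foliation assumption and $n\ge 3$, $I_2$ is $s$-injective on thin collar neighborhoods of each hypersurface in the foliation of $M$ defined by the level sets of $x$, and a layer-stripping argument along this foliation yields global $s$-injectivity on $M$. A fixed-point iteration then upgrades the quadratic identity to the conclusion $f^s = 0$, so that $f = d^s v$ is a potential tensor with $v$ vanishing on $\pa M$. Integrating the flow of $v$ produces a boundary-fixing diffeomorphism $\psi$ with $\psi^*g = \hat g$, completing the argument. The decisive difficulty is the tensorial microlocal step hidden inside this reference to Theorem \ref{thm:U-V:X-ray}: one must build an artificial-boundary operator elliptic on solenoidal 2-tensors in the appropriate pseudodifferential algebra, which requires solving an auxiliary elliptic gauge problem uniformly as the artificial boundary sweeps through the foliation, and it is precisely here that the hypothesis $n\ge 3$ enters, since only then are there enough transverse tensor modes for the relevant principal symbol to be invertible.
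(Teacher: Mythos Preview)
The paper does not give its own proof of this theorem; it is quoted as a background result from \cite{Stefanov-Uhlmann-Vasy:Rigidity-Normal}, so there is nothing in the present paper to compare your argument against. What follows is a comparison with the actual Stefanov--Uhlmann--Vasy proof.

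Your overall architecture (boundary determination, pseudo-linearization, tensorial local injectivity of the Uhlmann--Vasy type, layer-stripping along the convex foliation) matches the cited work. However, the description of the pseudo-linearization step and the handling of the nonlinearity are not accurate. The Stefanov--Uhlmann identity does \emph{not} yield $I_2 f = R(f)$ with a quadratic remainder to be absorbed by iteration. It gives an \emph{exact} vanishing statement: if the lens data of $g$ and $\hat g$ agree, then a weighted geodesic ray transform of $f=g-\hat g$ (integrating $f$ along $g$-geodesics but evaluated at points along the $\hat g$-geodesic with the same initial data) is identically zero. The nonlinearity sits in the weight and in the evaluation points, not in a remainder term. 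The crucial observation in \cite{Stefanov-Uhlmann-Vasy:Rigidity-Normal} is that this nonlinear operator has the \emph{same principal symbol} in the scattering calculus as the linear transform $I_2$, so the modified-normal-operator ellipticity argument of \cite{Uhlmann-Vasy:X-ray} applies directly to it without any fixed-point or Nash--Moser scheme. One then obtains that $f$ is a potential tensor in a thin layer, constructs the local diffeomorphism, and advances the layer. Your proposed ``quadratic remainder plus fixed-point iteration'' route would require a separate loss-of-derivatives analysis that the actual proof avoids entirely, and this is where your sketch has a genuine gap relative to the published argument.
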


In this paper we extend the first result, Theorem~\ref{thm:U-V:X-ray}, to a class of asymptotically
conic manifolds. Recall that a conic metric, on a manifold
$(0,\infty)_r\times Y$, with $Y$ the cross section or link, which we
always assume is compact and without boundary, is one of the form
$$
g_\infty=dr^2+r^2 h,
$$
where $h$ is a Riemannian metric on $Y$. An asymptotically conic
metric is one on a manifold which outside a compact set is identified
with $(r_0,\infty)_r\times Y$, with a metric that on this conic end
tends to $g_\infty$ as $r\to\infty$ in a specified way. To be
concrete, for our purposes, it is useful to `bring in' infinity, i.e.\
let $x=r^{-1}$, so $r\to\infty$ corresponds to $x\to 0$, and add a
boundary $\{0\}_x\times Y$ to the manifold, thus compactifying it to
$\overline{M}$. An
asymptotically conic metric then, as introduced by Melrose
\cite{RBMSpec}, is a Riemannian metric on $M$ which is of the form
$$
g=\frac{dx^2}{x^4}+\frac{h}{x^2}
$$
near $\pa\overline{M}$, where $h$ is a smooth symmetric 2-cotensor on
$\overline{M}$; $g$ is thus asymptotic to $g_\infty$ given by
$h|_{x=0}$ on the cross section $Y$.

\begin{thm}[See Theorem~\ref{thm:main-conic-ops}
  and Corollary~\ref{cor:main-conic}]\label{thm:main-conic}
Suppose that $M$ is a manifold of dimension $\geq 3$, $g$ is an
asymptotically conic metric on $M$ for which the cone's
cross section (link) has no conjugate points within distance $\leq\pi/2$. Then on a collar
neighborhood of infinity the geodesic X-ray transform is injective on
the restriction to the collar neighborhood of
sufficiently rapidly decaying
exponential-power type weighted function spaces, i.e.\ for all $p>0$
there is $C>0$ such that injectivity on spaces such as
$e^{-C/x^{2p}}L^2_g$ holds.
\end{thm}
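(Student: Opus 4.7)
The plan is to adapt the Uhlmann--Vasy normal operator strategy behind Theorem~\ref{thm:U-V:X-ray} to the asymptotically conic end, with the weight $e^{-C/x^{2p}}$ playing the role of an effective semiclassical parameter and with all analysis carried out in the 1-cusp pseudodifferential calculus $\Psiocu$ and its semiclassical refinement $\Psiocuh$ promised in the abstract.

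First I would work on the compactification $\overline{M}$, in which the conic end becomes a collar of $\pa\overline{M}=\{x=0\}$, and insert an artificial boundary at $\{x=c\}$ for small $c$. The level sets $\{x=\mathrm{const}\}$ are strictly concave from the side of $\{x\geq\mathrm{const}\}$ for the asymptotically conic $g$, so the localized X-ray transform---$I$ applied to maximal geodesic segments contained in $\{0<x<c\}$ with endpoints on the artificial boundary---is the right object, directly parallel to the local transform of Theorem~\ref{thm:U-V:X-ray}. For each $p>0$ one then builds a normal-type operator
$$
N_C = e^{-C/x^{2p}}\,L\,I\,e^{C/x^{2p}},
$$
where $L$ is a weighted backprojection supported near $\pa\overline{M}$, designed so that, after setting $\semi$ equal to an appropriate negative power of $C$, $N_C$ is an element of the semiclassical 1-cusp calculus $\Psiocuh$ of suitable orders.

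The core step is to verify that $N_C$ is elliptic in $\Psiocuh$. Its principal symbol on the boundary face $\{x=0\}$ is determined by the model exact cone over the link $(Y,h|_{x=0})$; on such a cone the geodesics can be parametrized explicitly in terms of geodesics on $Y$, and the piece of a cone geodesic that travels from infinity back to infinity while remaining in the collar has angular projection of length at most $\pi/2$ on $Y$. The hypothesis that $(Y,h|_{x=0})$ has no conjugate points within distance $\pi/2$ is precisely what forces the associated oscillatory integral---the cone-model analogue of the Uhlmann--Vasy symbol computation---to be non-degenerate, yielding ellipticity of $N_C$ at the boundary. A standard semiclassical elliptic parametrix construction in $\Psiocuh$ then produces a left inverse of $N_C$ modulo an $\semi$-small remainder; for $C$ sufficiently large this remainder is absorbable, and left-invertibility of $N_C$ on $\Psiocuh$-based Sobolev spaces translates into left-invertibility of the localized $I$ on the weighted space $e^{-C/x^{2p}}L^2_g$, giving the theorem.

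The main obstacle is not the geometric hypothesis but the analytic scaffolding: one must build the 1-cusp calculus $\Psiocu$ and its semiclassical refinement $\Psiocuh$ with enough structure---vector fields $\Vocu,\Vocuh$, a symbol map with the correct composition and mapping properties on $\Hocuhh$-type spaces, and an elliptic parametrix theorem---and then check that the conjugated integral operator $e^{-C/x^{2p}}(L\circ I)e^{C/x^{2p}}$ genuinely belongs to $\Psiocuh$ with the principal symbol computed above. Once the calculus is in place, the link hypothesis enters only at the very last moment, to certify non-vanishing of the principal symbol on the boundary face, and the remainder of the proof follows the Uhlmann--Vasy template.
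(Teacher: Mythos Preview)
Your high-level strategy matches the paper's, but two of your concrete claims are wrong in ways that would derail the argument. First, the operator $e^{-C/x^{2p}}LIe^{C/x^{2p}}$ as you have written it is \emph{not} in the 1-cusp calculus: geodesics through $(x,y)$ whose direction is not nearly tangent to the $x$-level sets climb to $x$-values far exceeding $x$, and the exponential conjugation then produces an unbounded factor along them. The paper inserts an angular localizer $\tilde\chi$ on the sphere bundle between $L$ and $I$, supported where $|\lambda|\lesssim h^{1/2}x^p$ (with $\lambda$ the coefficient of $x\pa_x$ in the tangent direction); this is not a spatial cutoff ``near $\pa\overline{M}$'' but a cutoff in angle, and it is exactly what forces the conjugated operator into $\Psiocuhh$. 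Relatedly, your geodesic picture is inverted: the geodesics used do not have endpoints on the artificial boundary $\{x=c\}$ but go from $x=0$ (infinity) to a maximal $x$-value below $c$ and back to $x=0$, sweeping out a link-geodesic arc of length $\pi$ with the tangency point at link-distance $\pi/2$ from each end---this is the geometric origin of the $\pi/2$ in the hypothesis.

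Second, you have misplaced where the no-conjugate-points assumption enters. It is \emph{not} what yields ellipticity; it is what makes the operator pseudodifferential at all, by guaranteeing (via non-stationary phase) that the Schwartz kernel is rapidly decaying in the off-diagonal regions where $t$ is bounded away from zero. Ellipticity is proved separately by an explicit Gaussian/stationary-phase computation at the diagonal and depends on the specific choice of $\tilde\chi$. Two further refinements you will need: the semiclassical structure is a \emph{foliation} one (tangential derivatives scale as $h^{1/2}$, transversal as $h$), with $h$ entering both the weight $\Phi/h$ and the localizer $\tilde\chi(\lambda/(h^{1/2}x^p))$, so it is not merely a relabeling of $C$; and at the artificial boundary $\{x=c\}$ one needs the scattering (not 1-cusp) structure, so the paper works in a combined scattering/1-cusp semiclassical foliation algebra on the slab---without this you recover only injectivity on functions supported in the collar, not on restrictions to it.
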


\begin{rmk}\label{rmk:general-p}
While $\pi/2$ in the statement of the theorem might look peculiar, it
is purely geometric, and is explained in
Section~\ref{sec:conic-geometry}.

In addition, the function $x$ plays a dual role in the present discussion, as we
explain below: one is connected to the asymptotically conic geometry,
and is thus fixed, and other to the
analysis of the X-ray transform inversion; the latter determines the
exponential weight in the theorem. The arguments below will be
given in detail for $p=1$, in which case the decay assumption is
sufficiently fast Gaussian decay, i.e.\ $e^{-C/x^2}L^2_g$, with $C>0$
large. Working with general $p>0$ only requires minor changes, and we
will place these in remarks throughout the paper.
See Remark~\ref{rmk:p-1-cusp-ispsdo}.
\end{rmk}

Note that the assumption holds in particular on perturbations of
asymptotically Euclidean metrics (for which the link has conjugate
points at distance $\pi$), even though these typically have
conjugate points, indeed this is necessarily the case if the metric
keeps being asymptotic to Euclidean space but is not flat, as shown
recently by Guillarmou, Mazzucchelli and Tzou
\cite{Guillarmou-Mazzucchelli-Tzou:Conjugate}. This result thus partially
strengthens the injectivity result of Guillarmou, Lassas and Tzou
\cite{Guillarmou-Lassas-Tzou:Conic}, in that that work requires the
absence of conjugate points; however, this strengthening comes at the
cost of imposing faster decay conditions in our case.

If $\overline{M}$ has a global convex foliation, our Theorem combined
with the result of
\cite{Uhlmann-Vasy:X-ray} immediately implies the full invertibility
on sufficiently fast Gaussian decaying functions on $M$: first the
restriction to a collar neighborhood of the boundary is determined,
and thus if two functions have the same X-ray transforms, they are
supported away from $x=0$, so \cite{Uhlmann-Vasy:X-ray}  applies.

To explain the context of these results,
we note that it has been known for quite some time that under appropriate geometric
assumptions, namely the absence of conjugate points, $I^*I$ is an
elliptic pseudodifferential operator. For our purposes it is best to
consider $I$ as a map from (say, continuous) functions on $M$ to
functions on the sphere bundle $SM$, or equivalently (via the
Riemannian metric) the cosphere
bundle $S^*M$, as
$$
(If)(z,v) = \int f(\gamma_{z,v}(s))\, ds,
$$
where $\gamma_{z,v}$ is the geodesic through $z\in M$ with tangent
vector $v\in S_z M$. We then replace $I^*$ by the map $L$ from
functions on $SM$ to functions on $M$ defined by
$$
(Lw)(z)=\int_{S_z M} w(z,v) |d\sigma|,
$$
where $\sigma$ is a positive smooth density (e.g.\ the Riemannian one) on $S_zM$, smoothly dependent
on $z$; $LI$ is then an elliptic pseudodifferential operator of order $-1$.
This
gives that in the context of compact manifolds with boundary
satisfying these geometric conditions, $LI$, thus $I$, has a
finite, but potentially large, dimensional nullspace. The advance in the just mentioned papers
was to exclude the possibility of such a nullspace as well as to
localize the problem, thus eliminating the need for conjugate point assumptions. This was done by
introducing an artificial boundary, and recovering $f$ from $If$ from
information on geodesics that stay on one side of this boundary;
moving the artificial boundary sufficiently close to the original
boundary gave a small parameter in which asymptotic analysis
techniques could be used. Technically, this involved a localizer
$\tilde\chi$ on $SM$ which becomes singular at the artificial
boundary, localizing to geodesics that remain on the desired
side via the consideration of $L\tilde\chi I$. Based on the precise nature of the singularity, one gets a
different kind of an operator; with the particular choice made in
these papers, the approach relied on Melrose's
scattering algebra, associated to the new artificial boundary,
effectively pushing it to infinity analytically; we describe this
below.

In \cite{Vasy:Semiclassical-X-ray} a modified approach was introduced
where the artificial boundary was replaced by a semiclassical scaling
under somewhat more stringent geometric hypotheses; indeed, the two
approaches could even be combined, thus eliminating the extra
conditions and making sure that the only necessity for a combined approach is
purely geometric (as opposed to analytic). This new semiclassical approach is
more suited to our problem as otherwise it would be
harder to keep track of the behavior of the combined
pseudodifferential operator algebra when moving the artificial
boundary in this case as had been done in \cite{Uhlmann-Vasy:X-ray}: one has both an
artificial boundary, with the scattering algebra behavior, as well as a new
algebra at infinity, called the 1-cusp algebra. The
semiclassicalization of this joint algebra, on the other hand, easily
gives the full invertibility (rather than mere ellipticity) results
once the neighborhood of infinity is sufficiently small to control the
geometry, allowing for fixed artificial boundary; this is the key tool in the proof of Theorem~\ref{thm:main-conic}.

We prove new results of two different types. First, we develop a
new operator algebra, called the 1-cusp algebra, which is related to
the scattering pseudodifferential algebra but involves one more
blow-up, and its semiclassical version. Next, we show, similarly to
Uhlmann and Vasy \cite{Uhlmann-Vasy:X-ray}, that the X-ray
transform in the asymptotically conic setting can be modified to, via
composition with other operators,
an elliptic operator in this new algebra.  In the remainder of the
introduction in the two subsections we discuss each of these briefly.

\subsection{Analytic ingredients}
Before introducing the new 1-cusp algebra, we recall the scattering
algebra with which it shares many similarities. The scattering
pseudodifferential algebra was defined by Melrose in \cite{RBMSpec} in
the general geometric setting,
but his work had many predecessors. Indeed,  this algebra actually can be locally
reduced to a standard H\"ormander algebra, which in turn was studied earlier by
Parenti \cite{Parenti:Operatori} and Shubin
\cite{Shubin:Pseudodifferential}. Concretely then, on $\RR^n$, this
algebra arises by the standard quantization,
\begin{equation}\label{eq:sc-quant-non-compact}
(q_L(a)u)(z)=(2\pi)^{-n}\int_{\RR^n} e^{i(z-z')\cdot\zeta}a(z,\zeta)
u(z')\,d\zeta\,dz',\qquad u\in \calS(\RR^n),
\end{equation}
of symbols which are
separately symbolic, or symbolic of `product type', in the position and momentum variables
$(z,\zeta)$; symbols $a\in S^{m,l}$ of order $(m,l)$ satisfy
\begin{equation}\label{eq:sc-symbol-non-compact}
|(D_z^\alpha D_\zeta^\beta a)(z,\zeta)|\leq C_{\alpha\beta} \langle z\rangle^{l-|\alpha|}\langle\zeta\rangle^{m-|\beta|}
\end{equation}
for all multiindicies $\alpha,\beta\in\NN^n$. Thus, the Schwartz
kernel is the oscillatory integral (intepreted as a tempered distribution)
$$
K_A(z,z')=(2\pi)^{-n}\int_{\RR^n} e^{i(z-z')\cdot\zeta}a(z,\zeta)\,d\zeta
$$
with respect to the density $|dz'|$.
In Melrose's geometric
version one works on a compact manifold with boundary; the
correspondence is via the compactification $\overline{M}$ which we
have already described in the general context
of asymptotically conic spaces.

On a
manifold with boundary $M$ (we drop the bar over $M$ when we discuss
the general analytic structure on a manifold with boundary), scattering
vector fields $V\in\Vsc(M)$ are vector fields of the form $V=xV'$,
where $V'\in\Vb(M)$ is a vector field tangent to $\pa M$ (by the definition
of $\Vb(M)$), and where $x$ is a
boundary defining function. This notion is independent of the choice
of $x$ since any two choices differ by a positive factor.
In local coordinates near $\pa M$, with $y$ coordinates on $\pa M$,
scattering vector fields are thus of the
form
$$
a_0(x,y)x^2D_x+\sum_{j=1}^{n-1} a_j(x,y)xD_{y_j}.
$$
Scattering differential operators are finite sums of products of such
vector fields.
Scattering pseudodifferential operators are a generalization of the
latter, formally replacing polynomials in the vector fields by more
general functions. More precisely, their Schwartz kernels $K$ are locally given by oscillatory integrals
of the form
$$
(2\pi)^{-n}\int e^{i\Big(\frac{x-x'}{x^2}\xisc+\frac{y-y'}{x}\cdot\etasc\Big)}a(x,y,\xisc,\etasc)\,d\xisc\,d\etasc
$$
with respect to the density $\frac{|dx'\,dy'|}{(x')^{n+1}}$, see
\cite{RBMSpec}. Here the symbolic estimates of \eqref{eq:sc-symbol-non-compact} become conormal estimates
for $a$
$$
|((x\pa_x)^j\pa_y^\alpha \pa_{\xisc}^k\pa_{\etasc}^\beta a)(x,y,\xisc,\etasc)|\leq
C_{jk\alpha\beta}\langle\xisc,\etasc\rangle^{m-k-|\beta|}x^{-l}.
$$
The Schwartz kernel $K$ can in turn be regarded as a well-behaved,
namely conormal, distribution on a resolution (double blow-up) of the double space
$M^2=M\times M$. That is, in variables
$x,y,X=\frac{x-x'}{x^2},Y=\frac{y-y'}{x}$, $K$ is conormal to the
(lifted) diagonal $\{X=0,\ Y=0\}$. We discuss this resolution in some
detail in Section~\ref{sec:scattering}. One advantage of the geometric
approach, making the definition via a resolution, is that it is
automatically invariantly defined, i.e.\ from the local perspective it is
diffeomorphism invariant.

One reason that the scattering
algebra is a useful object to work with is that it is not only a
bi-filtered $*$-algebra, thus closed under composition, but the
composition, to leading order, modulo $\Psisc^{m-1,l-1}$, is symbolic, i.e.\ it can be 
expressed algebraically in terms
of the principal symbols, meaning the class $[a]$ of $a$ in \eqref{eq:sc-quant-non-compact} modulo $S^{m-1,l-1}$. Furthermore, the mapping properties are also very
well behaved. Defining weighted Sobolev spaces $H^{s,r}$ by adding a  weight,
$H^{s,r}= \langle z \rangle^{-r} H^s$, where in the case of $\RR^n$
$H^s$ is the standard Sobolev space (and in general is transported to
the asymptotically conic space via local coordinates), any scattering pseudodifferential operator $A\in
\Psisc^{m,\ell}$, maps weighted Sobolev spaces to weighted Sobolev
spaces $A: H^{s,r} \to H^{s - m, r - \ell}$. The residual class
$\Psisc^{-\infty, -\infty}$ maps any $H^{s,r} \to H^{s',r'}$, so
that they are all compact operators on any $H^{s,r}$.

The 1-cusp algebra shares many of the useful properties.
In order to define the 1-cusp algebra, it is helpful to first consider
the corresponding vector fields. Recall that cusp
vector fields are defined on a manifold with boundary equipped with a
boundary defining function $x$ with a given differential at the boundary
(so the boundary defining function is determined up to $O(x^2))$;
$V\in\Vcu(M)$ are smooth vector fields tangent to $\pa M$ with the
property that $Vx=O(x^2)$.
In local coordinates thus they are of the
form
$$
a_0(x,y)x^2D_x+\sum a_j(x,y)D_{y_j}.
$$
We then define the 1-cusp vector fields as $\Vocu(M)=x\Vcu(M)$,
so in local coordinates thus they are of the
form
$$
a_0(x,y)x^3D_x+\sum a_j(x,y)xD_{y_j}.
$$
These are thus also scattering vector fields, but with an additional
order of vanishing in the $D_x$ component.

When turning to the 1-cusp pseudodifferential operators, again defined
on manifolds with a preferred boundary defining function $x$ fixed up
to $O(x^2)$, corresponding to this
additional vanishing, we blow up the Schwartz kernel double space from
the scattering coordinates
$(x,y,X, Y)$ to $(x,y, V = \frac{X}{x}, Y )$. 
As $(x^2 \pa_x, x \pa_y)$ and $(X, Y)$ corresponded to $(\xisc, \etasc)$
in the scattering case, $(x^3 \pa_x, x \pa_y)$  and $(V, Y)$
correspond to $(\xiocu, \etaocu)$ in this new class when we write an
oscillatory integral to define an operator using the symbol. We define
our new class of symbols as smooth functions $a^{\oc}(x,y,\xiocu, \etaocu)$
which satisfy the inequalities
\[
  \left| (x \pa_x)^{j}  \pa_y^{\alpha}\pa^k_{\xiocu} \pa^\beta_{\etaocu}
    a^{\oc}(x,y,\xiocu,\etaocu)\right| \le C_{ab} \langle \xiocu, \etaocu \rangle^{m
    - k - |\beta|} x^{-\ell} .
\]
While in coordinates this is the same definition as a scattering class symbol (conormal
symbols), invariantly these are symbols (of the same type) on a
different (scaled)
cotangent bundle, and correspondingly we use a
different quantization map to turn them into operators. Concretely, we
use the
resolved space, assuring that they specify conormal distributions with
respect to $\{V=0,\ Y=0\}$:
\begin{equation*}\begin{aligned}
  K_A(x,y,V, Y) &= \frac{1}{(2\pi)^n}\int a(x,y,\xiocu, \etaocu)
  e^{i V \xiocu + Y \cdot \etaocu} \, d\xiocu \, d\etaocu\\
  &= \frac{1}{(2\pi)^n}\int a(x,y,\xiocu, \etaocu)
  e^{i \Big(\frac{x-x'}{x^3}\xiocu + \frac{y-y'}{x}\cdot \etaocu\Big)} \, d\xiocu \, d\etaocu,
  \end{aligned}\end{equation*}
with respect to the density $\frac{|dx'\,dy'|}{(x')^{n+2}}$.

Then, we show that we can describe composition of operators in this new algebra symbolically, and that we can define ellipticity similarly and construct parametrices for elliptic operators in this class with residual errors.  As in the scattering algebra, these residual errors are compact operators.

\begin{prp}[\cite{Zachos:Thesis}, see Proposition~\ref{prop:1c-comp}]
If $A \in \Psiocu^{m, \ell}$ with principal symbol, modulo
$\Psiocu^{m-1,\ell-1}$, $[a]\in S^{m,\ell}/S^{m-1,\ell-1}$ and $B\in \Psiocu^{m',\ell'}$ with principal symbol $[b]$ then $A \circ B \in \Psiocu^{m + m', \ell + \ell'}$  with principal symbol $[a][b]=[ab]$. 
\end{prp}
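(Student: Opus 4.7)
The plan is to follow the standard approach to composition in boundary-fibered pseudodifferential calculi, as developed for the scattering calculus by Melrose in \cite{RBMSpec}, adapted to the 1-cusp blow-up. Away from $\pa\overline{M}$ the 1-cusp calculus coincides with the standard H\"ormander calculus on the interior, so after a partition of unity the content is a local computation near $\{x=0\}$ in a coordinate chart $(x,y)$.

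First I would express the composition as an oscillatory integral. Writing $A$ in its left quantization and, modulo $\Psiocu^{m'-1,\ell'-1}$, rewriting $B$ in right quantization (via the Kuranishi trick of Taylor-expanding the symbol in the base variables), the composed kernel becomes
\begin{equation*}\begin{aligned}
  K_{A\circ B}(x,y;x'',y'') &= \frac{1}{(2\pi)^{2n}}\iiiint e^{i\Phi}\,a(x,y,\xiocu,\etaocu)\,b(x'',y'',\xi',\eta')\\
  &\qquad\times d\xiocu\,d\etaocu\,d\xi'\,d\eta'\,\frac{|dx'\,dy'|}{(x')^{n+2}},
\end{aligned}\end{equation*}
with phase $\Phi=\tfrac{x-x'}{x^3}\xiocu+\tfrac{y-y'}{x}\cdot\etaocu+\tfrac{x'-x''}{(x'')^3}\xi'+\tfrac{y'-y''}{x''}\cdot\eta'$.

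Next I would integrate out the intermediate variable $(x',y')$ via the change of variables $(x',y')\mapsto(\widetilde V,\widetilde Y)=\big((x-x')/x^3,(y-y')/x\big)$. Its Jacobian combines with $(x')^{-n-2}|dx'\,dy'|$ (to leading order as $x\to 0$) to give standard Lebesgue measure in $(\widetilde V,\widetilde Y)$, so that the $(\widetilde V,\widetilde Y)$-integral is a Fourier inversion enforcing $\xi'=\xiocu$, $\eta'=\etaocu$ modulo lower-order ``mismatch'' factors from the ratios $(x''/x)^3$ and $x''/x$ between the two phases. Expanding these ratios in Taylor series in $x''-x$ and applying the standard stationary-phase asymptotic expansion yields a left 1-cusp symbol
\[
  a\# b \sim ab + \sum_{j\geq 1} c_j, \qquad c_j \in S^{m+m'-j,\ell+\ell'-j},
\]
where each $c_j$ is a finite sum of products $\pa_{\xiocu,\etaocu}^\gamma a\cdot V^\gamma b$ with $V$ ranging over the 1-cusp vector fields $x^3\pa_x$, $x\pa_{y_j}$. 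The symbol estimates for $a\# b$ then follow by Leibniz from those for $a$ and $b$ and the closure of the symbol class under 1-cusp differentiation; in particular the principal symbol of $A\circ B$ is $[ab]=[a][b]$.

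The main obstacle will be verifying, uniformly as $x\to 0$, that the Jacobian of the change of variables $(x',y')\mapsto(\widetilde V,\widetilde Y)$ combines cleanly with $(x')^{-n-2}|dx'\,dy'|$, and that the ``mismatch'' factors $(x''/x)^3$ and $x''/x$ expand into 1-cusp symbols of the expected orders on the resolved double space. This is precisely the reason for the density exponent $n+2$ in the definition of 1-cusp quantization (as opposed to $n+1$ in the scattering case): it is matched to the $x^{-3}$ scaling of the $x$-phase variable $V$ against the $x^{-1}$ scaling of the $y$-phase variable $Y$, ensuring closure of the calculus. Once this geometric bookkeeping is done, the remainder of the argument is a direct analogue of Melrose's composition law for $\Psisc$.
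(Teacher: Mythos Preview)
Your approach is correct in principle and would yield the result, but the paper takes a considerably simpler route that avoids precisely the ``main obstacle'' you identify. Rather than working directly with the 1-cusp coordinates $(V,Y)=\big(\tfrac{x-x'}{x^3},\tfrac{y-y'}{x}\big)$ and tracking the Jacobian and mismatch factors $(x''/x)^3$, $x''/x$ through the composition, the paper first establishes (in the paragraphs preceding the proposition) that the 1-cusp quantization is locally equivalent to a standard left quantization on $\RR^n$ under the change of variables $z_n=x^{-2}$, $z_j=y_j/x$. In these flat coordinates the phase is simply $(z-z')\cdot\zeta$, the density is $|dz'|$, and the 1-cusp symbol class becomes a ``parabolic'' H\"ormander class $S^{m,\ell/2}_\para$ on $\RR^n_z\times\RR^n_\zeta$ satisfying $|\pa_z^\alpha\pa_\zeta^\beta a|\leq C\langle\zeta\rangle^{m-|\beta|}\langle z\rangle^{\ell/2-|\alpha|/2-\alpha_n/2}$. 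Composition is then literally the standard $\RR^n$ asymptotic expansion $(a\#b)\sim\sum_\gamma \tfrac{1}{\gamma!}\pa_\zeta^\gamma a\,D_z^\gamma b$, and closure of the parabolic symbol class under this expansion is immediate from the estimates; the gain of one order in $\ell$ corresponds to a gain of $1/2$ in the parabolic weight, matching the $\langle z\rangle^{-1/2}$ gain per $z$-derivative.

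What each approach buys: the paper's reduction trades geometric transparency for algebraic triviality---once the $\RR^n$ identification is in hand (which requires checking that $(\tilde V,\tilde Y)=(x^{-2}-(x')^{-2},\,y/x-y'/x')$ are valid coordinates near the 1-cusp front face, done earlier in the section), there is essentially nothing left to prove. Your direct approach stays on the resolved double space and is closer in spirit to Melrose's triple-space composition proofs; it works, and has the advantage of being intrinsic, but the bookkeeping you flag is genuine and is exactly what the paper's change of variables is designed to eliminate. The paper also sketches a third route, viewing 1-cusp symbols as second-microlocalized cusp symbols (conormal on the corner blow-up of the fiber-compactified cusp cotangent bundle) and importing composition from the cusp algebra.
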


As usual, this implies that there is a parametrix for elliptic
operators:

\begin{prp}[\cite{Zachos:Thesis}, see Proposition~\ref{prop:1c-param}]
If $A\in \Psiocu^{m,\ell}$ with principal symbol $a$ is
elliptic, i.e.\ for some $c>0$,
\[ |a(x,y,\xiocu,\etaocu)| \ge C \langle \xiocu,\etaocu\rangle ^m x ^{-\ell} \text{ for } |(\xiocu,\etaocu)|\gg 1 \text{ or } x \ll 1\]
then there is a parametrix $B\in \Psiocu^{-m,-\ell}$ with error $AB-I,BA-I$ in $\Psiocu^{-\infty, -\infty}$.
\end{prp}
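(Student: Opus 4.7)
The plan is the standard elliptic parametrix construction, adapted to the 1-cusp setting, using the symbolic composition from the preceding proposition as the only non-trivial input. I would proceed in three stages: construct a first approximation on the symbol level using the ellipticity assumption, iterate to improve the error order, and then Borel-sum the resulting asymptotic series.

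First I would build the principal parametrix symbol. By the ellipticity hypothesis, there is a compact set $K$ in the 1-cusp cotangent bundle (concretely, a region with $|(\xiocu,\etaocu)| \le R$ and $x \ge \delta$) outside of which $|a| \ge \tfrac{1}{2} C \langle\xiocu,\etaocu\rangle^m x^{-\ell}$. Choose a cutoff $\chi(x,y,\xiocu,\etaocu)$ that is supported in a slightly larger neighborhood of $K$ and is $1$ on $K$, and set
\[
  b_0(x,y,\xiocu,\etaocu) = \frac{1-\chi(x,y,\xiocu,\etaocu)}{a(x,y,\xiocu,\etaocu)}.
\]
The 1-cusp symbol estimates for $b_0$, of order $(-m,-\ell)$, follow directly from the ellipticity lower bound combined with the symbol estimates on $a$ and the chain rule, in exactly the same way as in the scattering calculus. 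Quantizing with the 1-cusp quantization gives $B_0 \in \Psiocu^{-m,-\ell}$.

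Next I would iterate. Applying the composition proposition, $AB_0 \in \Psiocu^{0,0}$ has principal symbol $[ab_0] = [1-\chi]$, so $AB_0 = I - R_1$ with $R_1 \in \Psiocu^{-1,-1}$ (the $-\chi$ contribution is a residual element since $\chi$ is compactly supported in $(\xiocu,\etaocu)$ and bounded away from $x=0$, hence a Schwartz kernel in the residual class). Iterating, one defines $R_k \in \Psiocu^{-k,-k}$ by the usual recursion $R_{k+1} = R_1 R_k$, with $R_1^k \sim R_k$, so formally $B \sim B_0 \sum_{k \ge 0} R_1^k$. The key step is to assemble this formal Neumann sum into an honest operator: I would prove a Borel-type asymptotic summation lemma asserting that given $C_k \in \Psiocu^{m-k,\ell-k}$ there exists $C \in \Psiocu^{m,\ell}$ with $C - \sum_{k < N} C_k \in \Psiocu^{m-N,\ell-N}$ for every $N$. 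On the symbol side this is the standard asymptotic summation of symbols applied to the full symbols of the $C_k$ (localizing, using a partition of unity, and pulling the sum back through the quantization); the only point to check is that the 1-cusp symbol class is closed under such summation, which is immediate from the same convexity/Cantor-diagonal argument that works for scattering symbols. Applying this yields $B^{(R)} \in \Psiocu^{-m,-\ell}$ with $AB^{(R)} - I \in \Psiocu^{-\infty,-\infty}$.

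Running the analogous construction on the left produces $B^{(L)} \in \Psiocu^{-m,-\ell}$ with $B^{(L)} A - I \in \Psiocu^{-\infty,-\infty}$, and the standard two-sided trick
\[
  B^{(L)} - B^{(R)} = B^{(L)}(AB^{(R)} - I) - (B^{(L)} A - I) B^{(R)}
\]
together with the fact (established already in the development of the calculus) that $\Psiocu^{-\infty,-\infty}$ is a two-sided ideal and that elements of $\Psiocu^{-m,-\ell}$ compose the residual class back into the residual class shows $B^{(L)} \equiv B^{(R)}$ modulo $\Psiocu^{-\infty,-\infty}$. Either one is then the desired $B$. The main obstacle I anticipate is verifying the Borel-type asymptotic summation directly in the 1-cusp calculus: the local quantization near the boundary uses the coordinates $(x,y,V,Y)$ with the densities $|dx'\,dy'|/(x')^{n+2}$, and one must check that the summation can be performed so that the resulting symbol still satisfies the uniform 1-cusp conormal estimates globally, rather than only on coordinate patches. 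Once that ingredient is in place, every other step is a mechanical transcription of the scattering-calculus argument.
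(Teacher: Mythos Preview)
Your argument is correct: it is the classical symbolic parametrix construction, and once Proposition~\ref{prop:1c-comp} is in hand every step goes through. The minor phrasing issue in your first iteration step (calling the $-\chi$ term ``residual'') is harmless, since what you actually use is only that $ab_0-1=-\chi$ vanishes in $S^{0,0}/S^{-1,-1}$, hence $AB_0-I\in\Psiocu^{-1,-1}$.

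However, the paper takes a shorter route that sidesteps precisely the obstacle you flag. Rather than carrying out the iteration and Borel summation intrinsically in the 1-cusp calculus, the paper identifies $\Psiocu^{m,\ell}$ locally (via the change of variables $z_n=1/x^2$, $z_j=y_j/x$) with the parabolic pseudodifferential class $\Psipara^{m,\ell/2}$ on $\RR^n$, whose symbols satisfy \eqref{eq:Rn-symbol-estimates}. In that model the quantization is the standard left quantization on $\RR^n$, so the full composition expansion, asymptotic summation, and parametrix construction are literally the H\"ormander $\Psi_\infty$ results; nothing needs to be re-verified. Your direct approach has the virtue of being self-contained within the geometric 1-cusp framework, but it requires checking that Borel summation respects the 1-cusp conormal estimates globally across charts, whereas the paper's reduction inherits this for free from the Euclidean theory.
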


As already alluded to earlier in the introduction, a semiclassical
variant of the 1-cusp algebra plays a key role in this work. This is a
foliation semiclassical algebra, associated to the full foliation $\cF$ by the
level sets of $x$. The foliation semiclassical algebra was described in
\cite{Vasy:Semiclassical-X-ray} in both the standard (no boundary) and
scattering (artificial boundary) settings; here we thus focus on the
1-cusp aspects.
Near $x=0$, the foliation tangent 1c-vector
fields are locally
$$
\sum a_j(x,y)xD_{y_j};
$$
the collection of these is denoted by $\Vocu(M;\cF)$.
The semiclassical version of $\Vocu(M)$ is
simply $\Vocuh(M)=h\Vocu(M)$; the semiclassical foliation version is
$$
\Vocuhh(M;\cF)=h\Vocu(M)+h^{1/2}\Vocu(M;\cF).
$$
Thus, the semiclassical foliation 1-cusp differential operators
take the form
$$
\sum_{\alpha+|\beta|\leq m} a_{\alpha\beta}(x,y,h)(hx^3D_x)^{\alpha} (h^{1/2}xD_y)^{\beta}.
$$
The corresponding pseudodifferential operators $A\in\Psiocuhh^{m,l}(M,\cF)$
again arise by a modified semiclassical quantization of standard
semiclassical symbols $a$, i.e.\ ones satisfying (conormal in $x$)
symbol estimates
$$
|(xD_x)^\alpha D_y^\beta D_{\xiocu}^\gamma D_{\etaocu}^\delta
a(x,y,\xiocu,\etaocu,h)|\leq C_{\alpha\beta\gamma\delta} \langle
(\xiocu,\etaocu)\rangle^{m-\gamma-|\delta|} x^{-l},
$$
namely
\begin{equation}\begin{aligned}\label{eq:1c-quantize}
    A_h u(x,y)&=Au(x,y,h)\\
    &=(2\pi)^{-n} h^{-n/2-1/2}\int
    e^{i\Big(\frac{x-x'}{x^3}\frac{\xiocu}{h}+\frac{y-y'}{x}\frac{\etaocu}{h^{1/2}}\Big)}\\
    &\qquad\qquad\qquad\qquad\qquad a(x,y,\xiocu,\etaocu)\,u(x',y')\,\frac{dx'\,dy'}{(x')^{n+2}}\,d\xiocu\,d\etaocu.
  \end{aligned}\end{equation}
Thus, in $x>0$, these are just the standard semiclassical foliation
operators, in $h>0$ the standard 1-cusp pseudodifferential
operators, with the combined behavior near $x=h=0$. In particular we
have an elliptic theory as in the semiclassical foliation setting: if
$A$ is elliptic, meaning
$$
|a(x,y,\xiocu,\etaocu)|\geq cx^{-l}\langle(\xiocu,\etaocu)\rangle^m,\qquad c>0,
$$
then there is a parametrix $B\in\Psiocuhh^{-m,-l}(M,\cF)$ with
$$
AB-\Id,BA-\Id\in h^\infty\Psiocuhh^{-\infty,-\infty}(M,\cF),
$$
and there exists
$h_0>0$ such that for $h<h_0$, $A\in\cL(\Hocuh^{s,r},\Hocuh^{s-m,r-l})$
is invertible with uniform bounds. This is the real reason for the
usefulness of the semiclassical setting: the errors of a parametrix are not only
compact (or finite rank), but can be eliminated altogether.

\subsection{Conic geometry and inverse problems}\label{sec:conic-geometry}

Bicharacteristics are integral curves of the Hamilton vector field of
the dual metric function of $g$. For our asymptotically conic metrics
the dual metric function is naturally a function on the same bundle
$\Tsc^*M$, on which principal symbols of the scattering
pseudodifferential operators live. When discussing the geometry,
however, we will use $(\tau,\mu)$ rather than $(\xisc,\etasc)$ as
coordinates on the fibers of this bundle, i.e.\ we write covectors as
$$
\tau\frac{dx}{x^2}+\mu\cdot\frac{dy}{x}.
$$
This separate notation, in particular, serves to emphasize that these
geometric objects will be unchanged even if one uses a different
analytic scaling (when for analytic purposes $x$ is replaced by $x^p$), cf.\ Remark~\ref{rmk:general-p}.
A computation of Melrose
\cite{RBMSpec} gives that for asymptotically conic metrics $g$,
$$
\frac{1}{2}H_g=x\Big(\tau(x\pa_x+\mu\cdot\pa_\mu)-|\mu|^2\pa_\tau+\frac{1}{2}H_h+xV\Big),
  $$
  where $V$ is a vector field tangent to the boundary $x=0$; this
  gives the arclength parametrization of geodesics. In view
  of the overall $x$ factor, which makes this parameterization
  degenerate at the boundary, it is useful to work with
  $\scH_g=x^{-1}H_g$. While
  $$
\frac{1}{2}\scH_g=\tau(x\pa_x+\mu\cdot\pa_\mu)-|\mu|^2\pa_\tau+\frac{1}{2}H_h+xV
  $$
  is non-vanishing at $x=0$ in general, it still does vanish at
  $\{x=0,\ \mu=0\}$, which is called the radial set. When $g$ is conic, and thus
$V=0$, within the unit level set of
the dual metric function of $g$, $\mu=0$ means $\tau=\pm 1$, i.e.\ at
such a point $\scH_g$ is a (non-vanishing) multiple of the radial
vector field $x\pa_x$, and thus the geodesic is radial (the $Y$
component, $y$, is
constant along it). As long as one stays microlocally away from
these radial geodesics, as we do here, one can work with
$\frac{1}{2|\mu|}\scH_g$ in place of $\scH_g$; we explain this in
Section~\ref{sec:normal-op} in terms of a blow-up.
This amounts to a reparameterization of the
  integral curves $c=c(s)$ of $\frac{1}{2}H_g$ via $\frac{dr}{ds}=x(c(s))|\mu(c(s))|$, i.e.\ if the
  reparameterized bicharacteristics are $\gamma=\gamma(r)$, then $\frac{ds}{dr}=x(\gamma(r))^{-1}|\mu(\gamma(r))|^{-1}$.
Melrose and Zworski \cite{RBMZw} computed these reparameterized
bicharacteristics. Note that switching between $\frac{1}{2}\scH_g$ and
  $\frac{1}{2|\mu|}\scH_g$ is a smooth reparameterization away from
  $\mu=0$, so one can equally well use either of these in that region.
  To
leading order at $x=0$, so globally for actually conic metrics, the
interior, originally
unit speed (prior to reparameterization),
bicharacteristics can be written
as follows:
\begin{equation}\begin{aligned}\label{eq:conic-bichar}
    &    x=\frac{x_0}{\sin r_0}\sin (r+r_0),\ \tau=\cos (r+r_0),\ |\mu|=\sin (r+r_0),\\
    & (y,\hat\mu)=\exp(rH_{\frac{1}{2} h})(y_0,\hat\mu_0),\ r\in(-r_0,-r_0+\pi),
  \end{aligned}\end{equation}
with $(y,\hat\mu)$ thus following a unit speed lifted geodesic of
length $\pi$ in
$Y$. Note that the maximum of $x\circ \gamma$, which is the point of tangency to
level sets of the function $x$, occurs halfway in the domain of
$\gamma$, at (parameter) distance $\pi/2$ from either endpoint, at $r+r_0=\pi/2$, and
thus in terms of the boundary geodesic distance $\pi/2$ from either
endpoint. In particular, near this point, where most of the action
takes place for us, $r$ and $t$ (the parameterization for
$\frac{1}{2}\scH_g$) can be used equally well. This also explains the
$\pi/2$ in the statement of our main Theorem~\ref{thm:main-conic}.

With these geometric preliminaries and with the properties of the new algebra established, we turn to the
X-ray transform $I$ and the operator $L$ defined earlier as a
replacement for $I^*$.
 
It turns out that for asymptotically conic metrics, if one uses a
suitable localizer $\tilde\chi$, and conjugates $L\tilde\chi
I$ by suitable exponential weights $e^{\Phi}$ to define the modified
normal operator, the result is an element of our new algebra.
Here $\tilde\chi$ localizes to (has support near) points in the sphere
bundle which are almost tangent to level sets of the boundary function
$x$. More precisely, the angle to the level sets goes to $0$ as $x\to 0$ proportionally
to $x$, i.e.
$$
\tilde\chi=\tilde\chi(x,y,\lambda/x,\omega)
$$
with compact support in the third slot, where we write tangent vectors as $\lambda
(x\pa_x)+\omega\cdot\pa_y$ relative to a product decomposition near
$\pa M$ respecting the foliation, see
Section~\ref{sec:geodesic-structure} for detail.
The exponential weight $e^\Phi$, on the other hand, is
Gaussian decaying, concretely $\Phi=-\frac{1}{2x^2}$. As
$$
L\tilde\chi
If=e^{\Phi}(e^{-\Phi}L\tilde\chi
Ie^{\Phi})e^{-\Phi}f,
$$
this means that the results we obtain are for $e^{-\Phi}f$, with its
Gaussian growing weight, which means that on the one hand the
estimates are strong at infinity, but on the other hand they only
apply to Gaussian decaying functions $f$. The actual analytic result, with an
ellipticity statement, is:

\begin{thm}[cf.\ \cite{Zachos:Thesis}, and see Theorem~\ref{ispsdo}
  for the full semiclassical version]
For asymptotically conical metrics with cross sections without
conjugate points within distance $\pi/2$ and for suitable localizers $\tilde\chi$, the modified normal operator of the X-ray transform  is an elliptic operator (for sufficiently small $x$) in the 1-cusp algebra. 
\end{thm}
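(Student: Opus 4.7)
The strategy is to compute the Schwartz kernel of $L\tilde\chi I$ explicitly near the boundary using the known structure of conic bicharacteristics, then to show that after conjugation by $e^\Phi$ the result is a conormal distribution on the 1-cusp resolved double space with an elliptic symbol.

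I would start by writing the kernel of $L\tilde\chi I$ in the form
\[
K(z,z') = \int_{S_zM} \tilde\chi(z,v)\int \delta\bigl(z'-\exp_z(sv)\bigr)\,ds\, d\sigma(v),
\]
and then change variables from $(v,s)\in S_zM\times\R$ to $z'=\exp_z(sv)$. The no-conjugate-points-within-distance-$\pi/2$ hypothesis on the link is precisely what makes this a local diffeomorphism in the regime where $\tilde\chi$ is supported. The Jacobian of this change of variables is controlled by the Jacobi fields along the geodesic, smooth up to the boundary thanks to the conic form of $g$.

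Next, I would exploit the explicit bicharacteristic formula \eqref{eq:conic-bichar} to determine how the tangent direction and arc length parameter are expressed in terms of the displacement $z'-z$. For $(z,v)$ in the support of $\tilde\chi$, the tangent vector at $z=(x,y)$ makes an angle $O(x)$ with the level set of $x$, so for arc length of order one the endpoint $z'=(x',y')$ satisfies $x-x' = O(x^3)$ and $y-y' = O(x)$. This is exactly the scaling that defines the 1-cusp resolved coordinates $V=(x-x')/x^3$, $Y=(y-y')/x$, and expressing the kernel in these resolved variables (together with $(x,y)$) yields a smooth function times the appropriate 1-cusp density, that is, a distribution conormal to $\{V=0,\ Y=0\}$ on the blown-up double space. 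Partial Fourier transform in $(V,Y)$ then produces a symbol $a(x,y,\xiocu,\etaocu)$ in the required class, establishing membership in $\Psiocu^{m,\ell}$ for appropriate orders.

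The Gaussian conjugation factor expands in 1-cusp coordinates as $e^{\Phi(z)-\Phi(z')} = e^{-V+O(x^2V^2)}$. To leading order this is a constant-in-$(\xiocu,\etaocu)$ shift of the kernel profile, equivalently a translation of the dual variable $\xiocu$ by $-i$. This shift is the reason the conjugated operator is genuinely elliptic at $x=0$: it damps the integrand enough to make the boundary principal symbol a convergent, non-zero integral rather than a divergent one. Concretely, the symbol at $x=0$ becomes an integral of $\tilde\chi$ times a smooth positive Jacobian factor over the lift of the link geodesic through $y$ with initial codirection determined by $(\xiocu,\etaocu)$, and the no-conjugate-points-within-$\pi/2$ hypothesis guarantees this parameterization is non-degenerate on the relevant range; choosing $\tilde\chi\geq 0$ with support on the appropriate open set makes the symbol strictly positive, hence elliptic.

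The principal technical obstacle is the bookkeeping in the second step: one has to show rigorously that the geometric quantities (direction at $z$, arc length to $z'$, Jacobi factor) depend smoothly on the resolved variables $(V,Y)$ uniformly up to $x=0$, and that the remainder terms arising from the fact that $g$ is only asymptotically (not exactly) conic do not spoil the 1-cusp structure. This requires a careful analysis of the geodesic equation near the radial set $\{x=0,\ \mu=0\}$, effectively controlling it by a Picard iteration in the rescaled parameter $r$ of \eqref{eq:conic-bichar} so that the perturbation $xV$ in the Hamilton vector field contributes only to lower order symbols. Once this is in place, the assembly of the pieces — membership in the 1-cusp algebra, computation of the principal symbol, and verification of its non-vanishing — follows the pattern of \cite{Uhlmann-Vasy:X-ray}, with the 1-cusp algebra developed earlier in the paper replacing the scattering algebra used there.
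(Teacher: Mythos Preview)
Your approach differs from the paper's in a basic way: you work on the \emph{kernel side}, changing variables from $(v,s)$ to $z'$ via the exponential map and trying to recognize the result directly as a conormal distribution on the 1-cusp double space. The paper instead works on the \emph{symbol side}: it writes the full left symbol $a(x,y,\xiocu,\etaocu)$ as an explicit oscillatory integral over $(t,\lambda,\omega)$ (equation~\eqref{eq:semicl-ocu-full-symbol} with $h=1$), rescales to $\hat t=t/x$, $\hat\lambda=\lambda/x$, and then partitions the $\hat t$-integral into a bounded region (analyzed by stationary phase in $(\hat t,\hat\lambda)$ or $(\hat t,\omega^\parallel)$) and a large-$\hat t$ region (analyzed by non-stationary phase together with the Gaussian damping from the weight). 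Your picture is geometrically cleaner, but it has a genuine gap.

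The gap is in the non-local contributions. Your change of variables $(v,s)\mapsto z'$ is only a local diffeomorphism near $s=0$; for $t$ of order one in the sc-parameterization the image point has $x'$ small and $y'$ at link-geodesic distance of order one from $y$, so $(V,Y)$ is of size $(x^{-2},x^{-1})$. For the kernel to lie in $\Psiocu$ these contributions must decay rapidly as $|(V,Y)|\to\infty$, and this is exactly where the weight does its real work. The paper computes (equation~\eqref{eq:semicl-ocu-exp-damping}) that $\Phi(z')-\Phi(z)=\hat\lambda\hat t+\alpha\hat t^2+O(x)$ with $\alpha<0$, so the weight is a \emph{Gaussian} $e^{\alpha\hat t^2}$ in the rescaled flow parameter, not merely a linear exponential in $V$. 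Your description ``$e^{-V}$ shifts $\xiocu\mapsto\xiocu-i$'' is formally correct but hides the mechanism: it is the quadratic-in-$\hat t$ part of $V$ that makes the $\hat t$-integral converge at all, and the unweighted kernel of $L\tilde\chi I$ is \emph{not} by itself a tempered distribution in $(V,Y)$ to which one can simply apply a complex shift. Relatedly, for $t$ bounded away from zero the paper needs both the $e^{-\epsilon/x^2}$ damping \emph{and} the no-conjugate-points hypothesis (to make the phase non-stationary in $(t,\lambda,\omega)$) to get decay in $(\xiocu,\etaocu)$; your sketch invokes the hypothesis only to justify the change of variables, not to control decay of the symbol.

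Your ellipticity argument is also too thin. The paper separates two regimes: at fiber infinity the principal symbol is computed by stationary phase and reduces to $\int_{\sphere^{n-2}}\tilde\chi\,d\theta^\perp$, positive because the equatorial spheres $\{\theta^\parallel=0\}$ and $\{\hat\lambda=0\}$ intersect nontrivially in dimension $n\geq 3$; at $x=0$ and \emph{finite} $(\xiocu,\etaocu)$ the boundary symbol is a genuinely different object whose positivity is obtained by taking $\tilde\chi$ to be (an approximation of) a specific Gaussian and evaluating the resulting integral explicitly. Your single sentence about ``$\tilde\chi$ times a positive Jacobian over the link geodesic'' does not address the finite-momentum boundary symbol, which is precisely where the ``suitable $\tilde\chi$'' hypothesis enters.
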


\begin{rmk}\label{rmk:xp-1-cusp}
If we replace $x$ by $x^p$ in the definition of the rescaled
$\lambda$, i.e.\ take $\tilde\chi=\tilde\chi(x,y,\lambda/x^p,\omega)$,
and similarly $\Phi=-\frac{1}{2px^{2p}}$, the conclusions remain valid,
with the $x^p$-based 1-cusp algebra, i.e.\ the one defined in an
analogous manner but with the boundary defining function $x$ replaced
by $x^p$, with a corresponding change of the smooth structure. Since we only need conormal
behavior (as opposed to smoothness) of the coefficients of the algebra
at $x$, the dependence of $\tilde\chi$ on $x$ vs.\ $x^p$ is
immaterial.
See Remark~\ref{rmk:1cusp-phase-exp-p} for the key analytic reason.
\end{rmk}

As just described for the 1-cusp pseudodifferential algebra, this implies that there is a parametrix with a finite rank
error acting on functions supported sufficiently close to infinity and
with sufficiently fast decay (which arises from the modifications
discussed below). In order to remove this error, we proceed by fixing
an artificial boundary $x=c$, $c>0$ sufficiently small, fixed by
geometric considerations, namely the lack (in a precise sense
discussed in Section~\ref{sec:X-ray-inv}) of conjugate points in $x<c$. Now we are
on a manifold with boundary, with the two boundary hypersurfaces given
by $x=0$ and $x=c$, so can in particular consider the
pseudodifferential algebra which is 1-cusp at $x=0$ and scattering at
$x=c$, corresponding to the artificial boundary there. The approach of
\cite{Uhlmann-Vasy:X-ray} would be to allow $c$ to become even smaller
and use it as an asymptotic parameter. Instead, as already stated, we
regard $c$ as fixed, but introduce a semiclassical parameter $h$ in
the spirit of \cite{Vasy:Semiclassical-X-ray}. In fact, we need
some additional information, namely we use the full foliation $\cF$ by the
level sets of $x$ in $0\leq x\leq c$; this allows one to define the
semiclassical foliation version of the 1-cusp/scattering algebra.

The main technical result, Theorem~\ref{ispsdo}, is that $A$, given by an exponential
conjugate of $L\tilde\chi I$, is elliptic in the
1-cusp algebra, and indeed in the semiclassical foliation 1-cusp
algebra. Here we use
$$
\tilde\chi=\tilde\chi(x,y,\lambda/(h^{1/2}x),\omega)
$$
and $e^{\Phi}$ with
$\Phi=-\frac{1}{2hx^2}$.

\begin{thm}[See Theorem~\ref{ispsdo}]
For asymptotically conical metrics with cross sections without
conjugate points within distance $\pi/2$ and for suitable localizers $\tilde\chi$, the modified normal operator
of the X-ray transform  is an elliptic operator (for sufficiently
small $h$ and $x$) with respect to the semiclassical foliation 1-cusp algebra. 
\end{thm}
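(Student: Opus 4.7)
The plan is to compute the Schwartz kernel of the modified normal operator $A = e^{-\Phi} L \tilde\chi I \, e^\Phi$ explicitly in coordinates near the boundary and identify it as a semiclassical foliation 1-cusp quantization of a symbol whose leading part is manifestly non-vanishing. I will parameterize the kernel of $L\tilde\chi I$ by the geodesics $\gamma_{z,v}$ through $z = (x_0,y_0)$; using the localizer $\tilde\chi(x,y,\lambda/(h^{1/2}x),\omega)$, only nearly tangential directions $\lambda \sim h^{1/2} x$ contribute. For such directions, the explicit conic bicharacteristic formulas of \eqref{eq:conic-bichar} apply to leading order, with asymptotically conic corrections (coming from the $xV$ term in $\frac12\scH_g$) that I track perturbatively.

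The crucial algebraic observation is that with $\Phi = -\frac{1}{2hx^2}$, the kernel ratio $e^{\Phi(z')-\Phi(z)}$ expands as
\[
  \exp\!\left( \frac{1}{2h}\Big(\frac{1}{x^2} - \frac{1}{x'^2}\Big) \right) = \exp\!\left( -\frac{V}{h} + O(x^2 V) \right),\qquad V = \frac{x-x'}{x^3},
\]
so a Laplace/stationary-phase analysis of the arclength integral along each contributing geodesic localizes at the tangency point where $x\circ\gamma$ attains its maximum, with Gaussian width of order $h^{1/2}x_0$ in the arclength parameter. Equivalently, in the variables $V$ and $Y = (y-y')/x$, the effective kernel support lies at $V\sim h$, $Y\sim h^{1/2}$ --- exactly the scaling built into the semiclassical foliation 1-cusp quantization \eqref{eq:1c-quantize}. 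Fourier-resolving the $\delta$-constraint along the geodesic in dual variables $(\xiocu,\etaocu)$ and regrouping, the kernel takes the form of \eqref{eq:1c-quantize} with an amplitude given, to leading order, by the product of three factors: $\tilde\chi$ evaluated at the critical direction; the Jacobian of the exponential map on the link cross section over distance $\pi/2$ from the tangency point; and a non-degenerate Gaussian in $(\xiocu,\etaocu)$ coming from the Laplace analysis. The first is bounded below by construction of $\tilde\chi$; the second is nonvanishing precisely under the no-conjugate-points-within-$\pi/2$ hypothesis on the link; the third is bounded below by direct computation. Together these give the required lower bound $|a| \ge c\, x^{-\ell}\langle(\xiocu,\etaocu)\rangle^m$ for ellipticity.

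The main obstacle is verifying that the perturbative corrections --- both the departure of the actual asymptotically conic metric from the conic model and the higher Taylor terms of $\Phi(z')-\Phi(z)$ and of the geodesic flow --- can all be absorbed into the amplitude as admissible symbols for $\Psiocuhh^{m,\ell}(M,\cF)$, without disturbing the canonical 1-cusp semiclassical phase $V\xiocu/h + Y\cdot\etaocu/h^{1/2}$. Concretely, this reduces to checking that each correction is of the form $x^j \cdot (\text{admissible symbol})$ with enough vanishing $j$ so that the conormal-in-$x$, symbolic-in-$(\xiocu,\etaocu)$ estimates
\[
  \bigl|(xD_x)^\alpha D_y^\beta D_{\xiocu}^\gamma D_{\etaocu}^\delta a\bigr| \le C_{\alpha\beta\gamma\delta}\,\langle (\xiocu,\etaocu)\rangle^{m-\gamma-|\delta|}\, x^{-\ell}
\]
hold uniformly in $h$. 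This accounting is what forces the particular rescaling $\lambda/(h^{1/2}x)$ in the localizer and the particular Gaussian weight $e^\Phi$: any other choice would either produce excess $x$-growth in the correction terms or shift the natural scaling away from the foliation structure. Once the symbolic bookkeeping is carried out near $x = 0$, the same argument at the artificial boundary $x = c$ (in Melrose's scattering sense, where the geometry is non-degenerate) yields the joint 1-cusp/scattering semiclassical foliation behavior asserted in the theorem.
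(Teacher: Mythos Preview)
Your outline captures the right heuristic scalings (the localization $\lambda\sim h^{1/2}x$, the weight $\Phi=-1/(2hx^2)$ matching $V\sim h$, $Y\sim h^{1/2}$), but there is a genuine conceptual gap in how you deploy the no-conjugate-points hypothesis, and the ellipticity mechanism you describe is not the correct one.

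\textbf{Where the no-conjugate-points hypothesis actually enters.} You invoke it as the non-vanishing of ``the Jacobian of the exponential map on the link cross section over distance $\pi/2$'' in your amplitude, i.e.\ as part of the ellipticity lower bound. In fact the hypothesis plays no role in ellipticity; it is what guarantees that $A$ lies in the algebra at all. The kernel of $L\tilde\chi I$ involves integration over the full geodesic, so the symbol $a_h(x,y,\xiocu,\etaocu)$ receives contributions from all $t$, not just from a neighborhood of the tangency point. For $t$ bounded away from $0$ (and especially $t\to\pm\infty$, where one must pass to the Melrose--Zworski reparameterization $r$ to compactify the flow), the phase $x^{-3}\xiocu(\gamma^{(1)}-x)/h + x^{-1}\etaocu\cdot(\gamma^{(2)}-y)/h^{1/2}$ is non-stationary in $(t,\lambda,\omega)$ precisely when the map $(t,\lambda,\omega)\mapsto(x^{-1}\gamma^{(1)},\gamma^{(2)})$ has full-rank differential, which is the no-conjugate-points condition. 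Without it there would be extra stationary points producing non-decaying far-from-diagonal contributions, and $A$ would simply fail to be pseudodifferential. Your sketch omits this entire regime.

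\textbf{The ellipticity computation.} The principal symbol comes only from the small-$t$ region (bounded $\hat t$ after rescaling), and its non-vanishing does not involve any link Jacobian or any ``non-degenerate Gaussian in $(\xiocu,\etaocu)$''. At high frequency one applies stationary phase in $(\hat t,\theta^\parallel)$ with $\theta=(\hat\lambda,\omega)$, reducing the symbol to a non-zero multiple of $\int_{\sphere^{n-2}}\tilde\chi(z,\hat\lambda(\theta^\perp),\omega(\theta^\perp))\,d\theta^\perp$; this is positive by a dimension count (the hyperplane $\theta^\parallel=0$ meets $\hat\lambda=0$ in a line, nontrivially intersecting $\sphere^{n-2}$ since $n\ge 3$). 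At finite $(\xiocu,\etaocu)$ one takes $\tilde\chi$ approximating a Gaussian $e^{\hat\lambda^2/(2\alpha)}$ so the $\hat t,\hat\lambda$ integrals can be done in closed form, yielding a positive $\omega$-integral. Neither step uses the link exponential map at distance $\pi/2$.

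Finally, the artificial-boundary/scattering discussion in your last paragraph belongs to a later result (the combined $\Psiscocuhh$ statement), not to this theorem, which is purely a 1-cusp statement near $x=0$.
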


\begin{rmk}\label{rmk:p-1-cusp-ispsdo}
The semiclassical foliation version of Remark~\ref{rmk:xp-1-cusp} is
applicable; in this case
$\tilde\chi=\tilde\chi(x,y,\lambda/(h^{1/2}x^p),\omega)$ and
$\Phi=-\frac{1}{2phx^{2p}}$.
See Remark~\ref{rmk:1cusp-phase-exp-p} for the key analytic reason.
  \end{rmk}

Thus, we can construct a parametrix, whose error is actually
small for small $h$, implying invertibility. This immediately gives

\begin{thm}[See Corollaries~\ref{cor:main-conic-op-support} and \ref{cor:main-conic-support}]
  For manifolds as specified above, the original geodesic
  X-ray normal operator and thus the X-ray transform itself, acting on
  functions with Gaussian decay, will have a trivial
  nullspace supported in  $x< \bar x$. \end{thm}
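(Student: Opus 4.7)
The plan is to invoke Theorem~\ref{ispsdo} together with the semiclassical foliation 1-cusp elliptic parametrix, and then unwind the exponential conjugation. First, by Theorem~\ref{ispsdo}, the modified normal operator $A=e^{-\Phi}L\tilde\chi I e^{\Phi}$ is elliptic in the semiclassical foliation 1-cusp algebra $\Psiocuhh^{m,\ell}(M,\cF)$ for appropriate orders. I would then apply the general elliptic invertibility result for this algebra, as recorded in the introduction: one constructs a parametrix $B\in\Psiocuhh^{-m,-\ell}(M,\cF)$ with $AB-\Id,\ BA-\Id\in h^\infty\Psiocuhh^{-\infty,-\infty}(M,\cF)$, and for $h<h_0$ with $h_0$ sufficiently small these error terms become contractions on $\Hocuhh^{s,r}$. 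Consequently $A\colon\Hocuhh^{s,r}\to\Hocuhh^{s-m,r-\ell}$ is (boundedly) invertible with uniform bounds.

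Next, I would translate invertibility of $A$ back into injectivity of $L\tilde\chi I$ on Gaussian-decaying functions. If $f$ has Gaussian decay in the sense that $u:=e^{-\Phi}f\in\Hocuhh^{s,r}$, then directly from the definition
\[
Au=e^{-\Phi}L\tilde\chi I\, e^{\Phi}u=e^{-\Phi}L\tilde\chi I f,
\]
so $L\tilde\chi I f=0$ forces $Au=0$, and by the invertibility just established $u=0$, hence $f=0$. To handle the original $I$ (rather than $L\tilde\chi I$), I would observe that $\tilde\chi$ is a multiplier on the sphere bundle and $L$ is fiber integration, so $If=0$ pointwise on $SM$ trivially gives $\tilde\chi\cdot If=0$ and then $L\tilde\chi I f=0$. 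The same chain then yields $f=0$, and therefore also $LI f=0\Rightarrow f=0$ a fortiori (since $If=0$ is implied by $LIf=0$ for $f$ in the nullspace of $I$ on this class, but more directly one may observe that any $f$ in the nullspace of $LI$ restricted to Gaussian-decaying functions supported in $\{x<\bar x\}$ satisfies $L\tilde\chi I f=0$ by the same multiplier argument applied to the orthogonality underlying $LI$).

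The choice of $\bar x$ and $h_0$ must be made consistently. Concretely, I would fix $\bar x>0$ small enough that the geometric hypothesis — no conjugate points in the cross section within distance $\pi/2$, as unpacked in Section~\ref{sec:conic-geometry} — guarantees that the ellipticity of $A$ furnished by Theorem~\ref{ispsdo} holds uniformly on the full collar $\{0<x<\bar x\}$. Then I would take $h_0$ small enough (depending on this fixed $\bar x$) that the semiclassical parametrix error is a genuine contraction on this collar. The elliptic inversion then closes the argument for all $f$ supported in $\{x<\bar x\}$ with sufficient Gaussian decay.

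The main obstacle, conceptually, is arranging the semiclassical uniformity in the previous paragraph: one needs the parametrix bounds to hold uniformly on a \emph{fixed} collar, not on a collar that must shrink with some asymptotic parameter. This is precisely what the semiclassical foliation 1-cusp framework is designed to provide, in contrast with the approach of \cite{Uhlmann-Vasy:X-ray} where the width of the collar itself played the role of the asymptotic parameter. Here $\bar x$ stays fixed while $h\to 0$ does the asymptotic work, converting the errors of parametrices from merely compact operators into genuinely small ones, which is what converts ellipticity into invertibility and hence ellipticity into the stated triviality of the nullspace.
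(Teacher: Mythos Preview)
Your overall strategy matches the paper's, but there is one genuine gap in the argument. You assert that the elliptic parametrix construction yields $BA-\Id\in h^\infty\Psiocuhh^{-\infty,-\infty}$ and hence that $A$ is boundedly invertible on $\Hocuhh^{s,r}$. However, Theorem~\ref{ispsdo} only furnishes ellipticity of $A$ on a collar neighborhood of $\pa M$, not on all of $M$; outside this collar the parametrix error $E_2=\Id-BA$ is merely in $\Psiocuhh^{0,0}$, not $h^\infty$-small. The paper closes this gap by inserting a cutoff $\phi\in\CI(M)$ identically $1$ on $K=\{x\le\bar x_0\}$ and supported inside the elliptic region $O$: for $v$ supported in $K$ one has $\phi BAv=(\Id+\phi E_2\phi)v$, and now $\phi E_2\phi\in h^\infty\Psiocuhh^{-\infty,-\infty}$ genuinely, so $\Id+\phi E_2\phi$ is invertible for small $h$. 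This cutoff step is precisely the mechanism by which the support hypothesis ``$\supp f\subset\{x<\bar x\}$'' enters the proof, and your write-up omits it.

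Your final paragraph, attempting to deduce injectivity of the unmodified normal operator $LI$ from that of $L\tilde\chi I$, is confused: $LIf=0$ does not in general imply $L\tilde\chi If=0$, and the appeal to ``the orthogonality underlying $LI$'' is circular. In fact the corollaries the theorem cites only establish left-invertibility of the modified normal operator $A$ and, as a consequence, injectivity of $I$ itself (via the chain $If=0\Rightarrow\tilde\chi If=0\Rightarrow L\tilde\chi If=0\Rightarrow f=0$, which you do carry out correctly). The phrase ``original geodesic X-ray normal operator'' in the theorem statement should be read as the unconjugated $L\tilde\chi I$ rather than $LI$; you need not, and cannot by this route, argue separately about $LI$.
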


The final ingredient of the proof of our main theorem,
Theorem~\ref{thm:main-conic}, is to eliminate the support condition
by working with a combined scattering-1 cusp algebra; see Section~\ref{sec:combined-sc-1c}.

\section{The 1-cusp algebra and its semiclassical version}
We proceed to create a new pseudodifferential algebra, the 1-cusp algebra, by performing
blow-ups on the Schwartz kernel double space, and also discuss it in terms of
explicit quantizations. This pseudodifferential operator algebra is
local on the underlying manifold with boundary $M$, unlike say Melrose's b-algebra, or more
relevantly, the cusp algebra, and can thus
be described by explicit quantization and diffeomorphism invariance
considerations, much as the case of the scattering algebra. We also
discuss, as in the case of the scattering and cusp algebras, the connection to a
class of pseudodifferential operators on $\RR^n$. Yet an
alternative approach to a description of certain (limited) aspects of
this algebra would be to follow the work of Amman, Lauter, Nistor \cite{Amman-Lauter-Nistor:Pseudodifferential}, which uses Lie algebroids.

\subsection{The scattering double space and the scattering pseudodifferential algebra}\label{sec:scattering}
As a convenience for the reader, we restate some basic definitions and
properties of the scattering algebra, which serves as a potential
starting point for our new algebra, and shares some properties with it. For further details, refer to Melrose's original paper introducing the scattering algebra \cite{RBMSpec}.

 Melrose defined the scattering algebra on general manifolds with
 boundary; a motivation is that the Laplacian of an asymptotically
 conic (in particular an asymptotically Euclidean) Riemannian metric
 is an element of this algebra. Let $x$ be a boundary defining
 function of $M$; this is determined up to a smooth positive
 factor. Then $\Vsc(M)$ consists of $\CI$ vector fields of the form
 $xV'$, $V'\in\Vb(M)$, i.e.\ $V'$ is a smooth vector field tangent to
 $\pa M$. In local coordinates $(x,y)$ near $\pa M$, with $y$ local
 coordinates on $\pa M$, scattering vector fields $\Vsc$
 are those vector fields generated, over $\CI(M)$, by $\{x^2 \pa_x, x
 \pa_{y_1},\ldots,x\pa_{y_{n-1}} \}$, i.e.\ are of the form
 $$
a_0(x,y)(x^2 \pa_x)+\sum_{j=1}^{n-1}a_j(x,y)(x\pa_{y_j}).
$$
These are thus all smooth sections of  a vector bundle, the scattering
tangent bundle, $\Tsc M$, whose elements at any point $p\in M$ can be written as $\lambda
(x^2\pa_x)+\sum_{j=1}^{n-1}\omega_j (x\pa_{y_j})$, i.e.\ $(\lambda,\omega)$ are coordinates
on the fibers of this vector bundle. The scattering cotangent bundle
is then the dual vector bundle, and we can thus write
scattering covectors as
$$
\tau\,\frac{dx}{x^2}+\mu\cdot\frac{dy}{x},
$$
i.e.\ $(\tau,\mu)$ are local coordinates on the fibers of $\Tsc^*M$, and
$(x,y,\tau,\mu)$ on $\Tsc^*M$ itself. While we keep this notation for
the asymptotically conic geometric discussion, in the analytic context
we will use the notation $(\xisc,\etasc)=(\tau,\mu)$, i.e.\ covectors
are written as
$$
\xisc\,\frac{dx}{x^2}+\etasc\cdot\frac{dy}{x}.
$$

The Schwartz kernel of a scattering pseudodifferential operator is a conormal distribution
scattering double space $M_{\scl}^2$, which is a blow-up, or
resolution, of the standard double space $M^2=M\times M$; see Figure~\ref{fig:sc-double}. We recall
that the blow-up of a product-type, or p-, submanifold of a manifold
with corners is a new manifold with corners in which different {\em
  normal} directions of approach to the submanifold being blown up are
distinguished; this process is thus an invariant generalized version
of the introduction of spherical coordinates around a submanifold,
i.e.\ of cylindrical coordinates. (Melrose's paper \cite{RBMSpec}
contains details of the blow-up process; see also
\cite{Melrose:Atiyah}, or indeed \cite[Section~5]{Vasy:Zero-energy} in a context that will play a
role in Section~\ref{sec:algebra}.) The double space is constructed by
taking $M^2=M\times M$ and then first
blowing up the corner $(\pa M)^2$ to get the b
double space, a manifold with corners, where $(\pa M)^2$ has been
blown up into the \emph{b front face}. In the interior of the b front
face, near the diagonal, we have coordinates
$$
x,y, \frac{x-x'}{x},y',
$$
with $y$ local coordinates on $\pa M$. The lifted diagonal
$\{\frac{x-x'}{x}=0,\ y=y'\}$ only meets the interior of this b front face,
i.e.\ $\frac{x-x'}{x}$ is bounded away both from $1$ and $-\infty$
along it. (Near the lift of $\pa M\times M$, i.e.\ $\{x=0\}$, and
$M\times\pa M$, i.e.\ $\{x'=0\}$, we need to use somewhat different
coordinates, but being near these faces amounts to $\frac{x'}{x}$
tending to $+\infty$, resp.\ $0$.)
Then, a second blow up, of the
boundary $\{x=0,\ \frac{x-x'}{x}=0,\ y=y'\}$, of the lifted diagonal
is performed with the new front face being the \emph{scattering
  front-face}. We obtain coordinates
$$
x,y,X=\frac{x-x'}{x^2}, Y=\frac{y-y'}{x},
$$
near the interior. (One can also replace $x,y$ by $x',y'$, and below
we discuss another possibility.)

\begin{figure}[ht]\begin{center}\label{hmmm}  \includegraphics[width=80mm]
{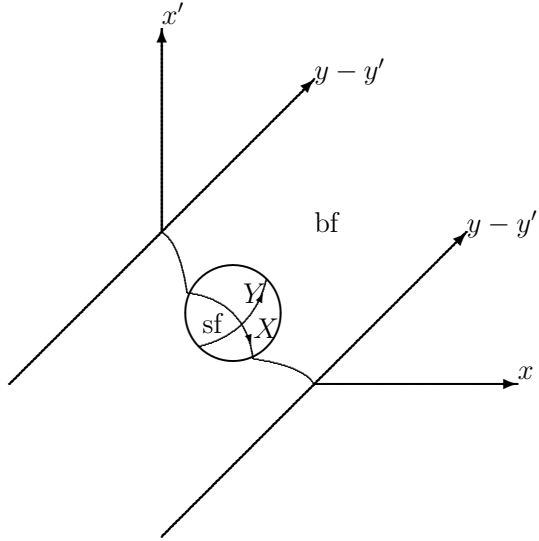}\end{center}\caption{The scattering double space,
with $\mathrm{\bf}$ the b-front face, $\mathrm{sf}$ the scattering
front face.}\label{fig:sc-double}\end{figure}

The scattering algebra consists of operators whose Schwartz kernels
 on this blown-up double space are well-behaved, meaning they are
 $\CI$ in the interior of $M^2$ away from the diagonal, are conormal
 to all boundary faces with infinite order vanishing at all of them
 except the scattering front face, and have a
 conormal singularity along the diagonal. Explicitly, the latter means
 that one can write the Schwartz kernels near the lifted diagonal
 intersecting the scattering front face, relative to the density
 $\frac{|dx'\,dy'|}{(x')^{n+1}}$, as
\begin{equation}\label{eq:sc-SK-XY}
K_A(x,y,X,Y)=(2\pi)^{-n}\int e^{i(\xisc X+\etasc\cdot Y)}a(x,y,\xisc,\etasc)\,d\xisc\,d\etasc,
\end{equation}
where $a\in S^{m,l}$ is a `product type' symbol
\begin{equation*}
  \left|(x \pa_x)^{j}  \pa_y^{\alpha} \pa^k_{\xisc}
    \pa^\beta_{\etasc} a(x,y,\xisc,\etasc)\right| \le C_{jk\alpha \beta} \langle
  \xisc, \etasc \rangle^{m - k - |\beta|} x^{-\ell}.
\end{equation*}
 We emphasize that this description is a priori only valid in a
 neighborhood of the lifted diagonal, but in fact is also valid in a
 neighborhood of the scattering front face, though not globally. We
 give below a version that is global in charts $O\times O$, $O$ open
 in $M$, via a reduction to $\overline{\RR^n}$.
 
One very convenient feature of the scattering pseudodifferential
algebra is that it can in fact be locally reduced to a standard
H\"ormander algebra \cite{Hormander:v3} on $\RR^n$, where `locally' is understood on the
radial compactification $\overline{\RR^n}$, resp.\ the compact
manifold with boundary $M$. Namely, taking `product type' symbols $a\in S^{m,l}$ on $\RR^n\times\RR^n$ such that
$$
|\pa_z^\alpha\pa_\zeta^\beta a(z,\zeta)|\leq C_{\alpha\beta} \langle
z\rangle^{l-|\alpha|}\langle \zeta\rangle^{m-|\beta|},
$$
and defining the Schwartz kernel of the standard, say, left quantization,
\begin{equation}\label{eq:sc-SK-Rn}
K_A(z,z')=(2\pi)^{-n}\int e^{i(z-z')\cdot\zeta}a(z,\zeta)\,d\zeta
\end{equation}
relative to the density $|dz'|$, the scattering algebra is obtained,
modulo operators with a Schwartz Schwartz kernel on $M^2$, by
identifying neighborhoods of points on $M$ with corresponding
neighborhoods on $\overline{\RR^n}$, and pulling back the Schwartz
kernel of an operator given by the just described left
quantization. The principal symbol of $A$ is defined as the equivalence class
$[a]$ of $a$ in $S^{m,l}/S^{m-1,l-1}$.

In order to connect the $\RR^n$-based description to the geometric
one, it is useful to use yet different coordinates near the scattering
front face, namely
$$
x,\ y,\ \tilde X=\frac{1}{x}-\frac{1}{x'}=\frac{x'-x}{xx'},\ \tilde Y=\frac{y}{x}-\frac{y'}{x'}=\frac{y-y'}{x}+\Big(\frac{1}{x}-\frac{1}{x'}\Big)y',
$$
so
\begin{equation*}\begin{aligned}
    &\tilde X=-\frac{x}{x'}X=-(1-xX)^{-1} X,\\
    &\tilde Y=Y -(1-xX)^{-1} X y'=Y -(1-xX)^{-1} X (y-xY),
\end{aligned}\end{equation*}
showing the smoothness of $\tilde X,\tilde Y$, and the reverse
expressions are also similarly checked to be smooth. In particular,
notice that $\tilde X=-X$, $\tilde Y=Y-Xy$ at $x=0$. Hence
\eqref{eq:sc-SK-XY} can be equally well-described as
\begin{equation}\label{eq:sc-SK-tildes}
K_A(x,y,X,Y)=(2\pi)^{-n}\int e^{i(\widetilde{\xisc} \tilde X+\widetilde{\etasc} \cdot
  \tilde Y)}\tilde a(x,y,\widetilde{\xisc},\widetilde{\etasc})\,d\widetilde{\xisc}\,d\widetilde{\etasc},
\end{equation}
with $\tilde a\in S^{m,l}$ as well. To reduce this to the
$\overline{\RR^n}$ perspective, recall that where, say, $|z_n|$ is
relatively large (and say $z_n$ is positive), we can use
$z_n^{-1},\frac{z_j}{z_n}$ as coordinates on the radial
compactification, with $z_n^{-1}$ defining the boundary; the
correspondence then is letting $x=z_n^{-1}$, $y_j=\frac{z_j}{z_n}$,
$1\leq j\leq n-1$, so
$z_n=x^{-1}$, $z_j=y_j/x$, so \eqref{eq:sc-SK-tildes} is in fact the
same as \eqref{eq:sc-SK-Rn}, keeping in mind that
$|dz'|=\frac{|dx'\,dy'|}{(x')^{n+1}}$. An advantage thus of
\eqref{eq:sc-SK-tildes} as well as \eqref{eq:sc-SK-Rn} is that they
are not restricted to the interior of the b-front face; they are also
valid at the left and right faces, at least near the diagonal in
$M^2$, i.e.\ on sets of the form $O\times O$, $O$ a coordinate chart,
thus eliminating dividing up treatments of the Schwartz kernels into
several regions, such the interior of the scattering front face, the
boundary of the scattering front face, etc.

One of the most significant features about the scattering algebra in
contrast to its many relatives (such as the b-algebra \cite{MR3792086}
or the 0-algebra \cite{MR1133743}) is that composition can be
described algebraically in terms of symbols to leading order in every
sense; this is immediate from the $\RR^n$-based description above.

\begin{prp}
If $A \in \Psisc^{m, \ell}$ with principal symbol, modulo
$\Psisc^{m-1,\ell-1}$, $[a]\in S^{m,\ell}/S^{m-1,\ell-1}$ and $B\in \Psisc^{m',\ell'}$ with principal symbol $[b]$ then $A \circ B \in \Psisc^{m + m', \ell + \ell'}$  with principal symbol $[a][b]=[ab]$. 
\end{prp}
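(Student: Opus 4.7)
The plan is to use the reduction to $\RR^n$ discussed immediately before the proposition: after identifying coordinate charts of $M$ with patches on $\overline{\RR^n}$, every $A\in\Psisc^{m,\ell}$ is, modulo an operator with Schwartz Schwartz kernel on $M^2$, the left quantization $q_L(a)$ of a product-type symbol $a\in S^{m,\ell}$ on $\RR^n\times\RR^n$ in the sense of \eqref{eq:sc-SK-Rn}. Residual (Schwartz kernel) operators compose with anything in $\Psisc^{*,*}$ to give another residual operator (the kernel of $\Psisc^{m,\ell}$ acts boundedly between Schwartz spaces), so the composition question reduces to composing two left quantizations on $\RR^n$.

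First, I would recall the usual formal computation: if $A=q_L(a)$ and $B=q_L(b)$ with Schwartz kernels as in \eqref{eq:sc-SK-Rn}, then $A\circ B=q_L(a\# b)$ with
\begin{equation*}
(a\# b)(z,\zeta)=(2\pi)^{-n}\iint e^{-iw\cdot\omega}\,a(z,\zeta+\omega)\,b(z+w,\zeta)\,dw\,d\omega,
\end{equation*}
understood as an oscillatory integral. Next I would verify that this integral is well-defined for product-type symbols and that $a\# b\in S^{m+m',\ell+\ell'}$, by a standard Taylor expansion in $\omega$ around $0$ followed by integration by parts in $w$ (using $e^{-iw\cdot\omega}=\langle\omega\rangle^{-2N}(1-\Delta_w)^N e^{-iw\cdot\omega}$ and the dual identity to gain decay in both $w$ and $\omega$). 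The key point is that every $\pa_\zeta$ applied to $a$ lowers its $\zeta$-order by $1$ while leaving the $z$-weight $\langle z\rangle^{\ell}$ untouched, and every $\pa_z$ applied to $b$ lowers its $z$-weight by $1$ without changing the $\zeta$-order; hence the two filtrations behave completely independently and add on multiplication, which is the feature that makes the product-type class closed under composition.

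The Taylor expansion yields the asymptotic series
\begin{equation*}
a\# b\sim\sum_{\alpha}\frac{(-i)^{|\alpha|}}{\alpha!}\,(\pa_\zeta^{\alpha}a)(\pa_z^{\alpha}b),
\end{equation*}
with the $\alpha$-th term lying in $S^{m+m'-|\alpha|,\ell+\ell'-|\alpha|}$. Keeping only the $|\alpha|=0$ term, $a\# b-ab\in S^{m+m'-1,\ell+\ell'-1}$, so the principal symbol of $A\circ B$ is $[ab]=[a][b]$. Finally, diffeomorphism invariance of the scattering class lets one patch these local computations together: the composition is well-defined globally as an element of $\Psisc^{m+m',\ell+\ell'}$, and the symbol formula holds in every coordinate chart, hence invariantly.

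The main analytic obstacle is justifying the oscillatory integral and its asymptotic expansion while simultaneously tracking both the $\zeta$-filtration and the $z$-weight. This is not truly harder than the classical Kohn--Nirenberg composition, but one must check that the double integration by parts (in $w$ to gain $\omega$-decay, and in $\omega$ to gain $w$-decay) produces remainders whose $z$-weights are controlled, using only that $\langle z+w\rangle^{\ell'}\lesssim\langle z\rangle^{\ell'}\langle w\rangle^{|\ell'|}$ and absorbing the $\langle w\rangle^{|\ell'|}$ factor by choosing $N$ sufficiently large in the $w$-integration by parts. Once this is in place, the remaining steps are routine.
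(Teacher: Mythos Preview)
Your proposal is correct and follows exactly the route the paper indicates: the paper does not give a detailed proof but simply states that the result ``is immediate from the $\RR^n$-based description above,'' i.e.\ from the local identification with the standard H\"ormander calculus for product-type symbols $S^{m,\ell}$ on $\RR^n\times\RR^n$. Your write-up fills in precisely those details---the Kohn--Nirenberg composition formula, the asymptotic expansion, and the Peetre-type control of the $z$-weight in the remainder---and so is a faithful elaboration of the paper's one-line justification.
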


\begin{prp}
If $A\in \Psisc^{m,\ell}$ with principal symbol $a$ is
elliptic, i.e.\ for some $c>0$,
\[ |a(x,y,\xisc, \etasc)| \ge C \langle \xisc, \etasc\rangle ^m x ^{-\ell} \text{ for } |(\xisc, \etasc)|\gg 1 \text{ or } x \ll 1\]
then there is a parametrix $B\in \Psisc^{-m,-\ell}$ with error $AB-I,BA-I\in\Psisc^{-\infty, -\infty}$.
\end{prp}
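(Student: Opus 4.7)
The plan is the classical elliptic parametrix construction, executed in the bi-filtered scattering calculus using the symbol calculus just stated (the composition proposition above) together with an asymptotic summation lemma. By the $\RR^n$-reduction via the radial compactification described in Section~\ref{sec:scattering}, one may equivalently view this as the standard H\"ormander parametrix construction applied to product-type symbols $S^{m,\ell}$ on $\RR^n\times\RR^n$.

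First I construct an approximate inverse symbol. By ellipticity there exist $R_0,x_0>0$ such that $|a(x,y,\xisc,\etasc)|\geq c\langle\xisc,\etasc\rangle^m x^{-\ell}$ on $\Omega=\{|(\xisc,\etasc)|\geq R_0\}\cup\{x\leq x_0\}$. Pick a smooth cutoff $\chi$ equal to $1$ off a slightly smaller such region and vanishing on the set $\{x\geq x_0/2,\ |(\xisc,\etasc)|\leq 2R_0\}$, which is compact in the relevant compactification. Set $b_0=\chi/a$ on $\Omega$ and $b_0=0$ elsewhere. Differentiating and using the ellipticity bound yields $b_0\in S^{-m,-\ell}$, and by construction $a b_0 - 1 = \chi-1$ is supported on a compact set in $(x,\xisc,\etasc)$, hence lies in $S^{-\infty,-\infty}$. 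Quantizing, let $B_0\in\Psisc^{-m,-\ell}$. By the composition proposition, $R_1:=I-AB_0\in\Psisc^{-1,-1}$.

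Next I improve the error to all orders by asymptotic summation. Using the composition proposition repeatedly,
\[
R_1^k\in\Psisc^{-k,-k},\qquad B_0 R_1^k\in\Psisc^{-m-k,-\ell-k},
\]
so these terms decrease jointly in both filtrations. The Borel-type asymptotic summation lemma for scattering symbols (immediate from the $\RR^n$-reduction and the usual Borel lemma for product-type symbols) produces $B\in\Psisc^{-m,-\ell}$ with
\[
B-\sum_{k=0}^{N-1}B_0 R_1^k\in\Psisc^{-m-N,-\ell-N}\quad\text{for every }N.
\]
Since $AB_0=I-R_1$ telescopes via $A(B_0 R_1^k)=R_1^k-R_1^{k+1}$ to $A\sum_{k=0}^{N-1}B_0 R_1^k=I-R_1^N$, we obtain $AB-I\in\Psisc^{-N,-N}$ for every $N$, hence $AB-I\in\Psisc^{-\infty,-\infty}$. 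An identical argument on the left produces $B'\in\Psisc^{-m,-\ell}$ with $B'A-I\in\Psisc^{-\infty,-\infty}$, and then $B-B'=(I-B'A)B-B'(I-AB)\in\Psisc^{-\infty,-\infty}$ because $\Psisc^{-\infty,-\infty}$ is a two-sided ideal. Thus $B$ is a two-sided parametrix.

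The only non-routine ingredient is the joint Borel-type asymptotic summation: one must produce a single scattering symbol whose partial sums decrease simultaneously in both the fiber-symbolic and conormal filtrations. This is standard given the $\RR^n$-reduction via the radial compactification, but it is the step where genuine analytic content (rather than pure algebra from the composition formula) enters, and will need to be recorded carefully since the same asymptotic summation structure will be invoked verbatim when the analogous parametrix statement is proved for the new 1-cusp algebra $\Psiocu$ in later sections.
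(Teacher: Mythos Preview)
Your proof is correct and is precisely the standard construction the paper has in mind. In the paper this proposition is stated without proof as a recalled property of the scattering calculus, with the justification being exactly the $\RR^n$-reduction you invoke: the scattering algebra is locally identified with the H\"ormander algebra of product-type symbols, so the standard symbolic parametrix construction (inverse symbol plus Neumann series plus asymptotic summation) applies verbatim.
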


We define the usual weighted Sobolev spaces $H^{s,r}$ on $\RR^n$ by
imposing a  weight: $H^{s,r}= \langle z \rangle^{-r} H^s$; these are
then transported to the manifold via the just discussed identification
to define the scattering Sobolev spaces $\Hsc^{s,r}$. Equivalently, for
any real $s,r$, writing $\cS'$ for the space of tempered distributions
on $M$, i.e.\ the dual of $\CI$ functions vanishing to infinite order
at $\pa M$,
$$
\Hsc^{s,r}=\{u\in\cS':\ \exists A\in\Psisc^{s,r}\ \text{elliptic}\
\text{and}\ Au\in L^2\},
$$
where $L^2$ is with respect to a scattering density
$\frac{|dx\,dy|}{x^{n+1}}$, which corresponds to $|dz|$ in the
identification on $\RR^n$.

These weighted Sobolev spaces can be used to describe the mapping
properties of scattering pseudodifferential operators:  

\begin{prp}
  If $A\in \Psisc^{m,\ell}$, then $A: \Hsc^{s,r} \to
  \Hsc^{s - m, r - \ell}$.
\end{prp}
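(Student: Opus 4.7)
The plan is to reduce the general case to the $L^2 \to L^2$ boundedness of operators of order $(0,0)$, using the parametrix construction and the composition calculus already stated in the excerpt.

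First I would establish the base case: any $A_0 \in \Psisc^{0,0}$ is bounded $L^2 \to L^2$, where $L^2$ is with respect to the scattering density. Via the reduction to the $\RR^n$-based description (identifying $M$ near a boundary point with $\overline{\RR^n}$ and $\frac{|dx\,dy|}{x^{n+1}}$ with $|dz|$), the symbols $a \in S^{0,0}$ satisfy
\[
|\pa_z^\alpha \pa_\zeta^\beta a(z,\zeta)| \leq C_{\alpha\beta} \langle z\rangle^{-|\alpha|}\langle \zeta\rangle^{-|\beta|},
\]
so Calder\'on--Vaillancourt (applied on $\RR^n$) gives $L^2$-boundedness of the left quantization. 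A partition of unity in $M$ together with the observation that off-diagonal pieces and operators with Schwartz kernels on $M^2$ are bounded between any weighted spaces handles the global statement.

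Next, I would reduce the general case. Given $A \in \Psisc^{m,\ell}$ and $u \in \Hsc^{s,r}$, I want to exhibit an elliptic $C \in \Psisc^{s-m,r-\ell}$ with $CAu \in L^2$. Pick an elliptic $B \in \Psisc^{s,r}$ so that $Bu \in L^2$ by the definition of $\Hsc^{s,r}$, and let $B' \in \Psisc^{-s,-r}$ be the parametrix from the elliptic parametrix proposition, so that $B'B - I = R \in \Psisc^{-\infty,-\infty}$. Also pick any elliptic $C \in \Psisc^{s-m,r-\ell}$. By the composition proposition, $CAB' \in \Psisc^{0,0}$, and therefore
\[
CAu = (CAB')(Bu) - (CA)\, R\, u.
\]
The first term lies in $L^2$ by the base case, since $Bu \in L^2$. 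For the second term, note that $R \in \Psisc^{-\infty,-\infty}$ has a Schwartz Schwartz kernel on $M^2$, so $Ru$ is Schwartz (in particular in every $\Hsc^{s',r'}$) for any $u \in \cS'$ of finite order, and $CA \in \Psisc^{s-m+m,r-\ell+\ell} = \Psisc^{s,r}$ then maps this into $L^2$ (in fact into any $\Hsc^{s',r'}$). Hence $CAu \in L^2$, which by the elliptic definition of $\Hsc^{s-m,r-\ell}$ gives $Au \in \Hsc^{s-m,r-\ell}$.

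The one nontrivial step is the $L^2$ boundedness of order $(0,0)$ operators; everything else is a formal manipulation using the algebra and parametrix properties already stated. This $L^2$ boundedness is, however, entirely standard via the $\RR^n$ reduction, since the product-type symbol estimates in $(z,\zeta)$ are exactly what Calder\'on--Vaillancourt requires. A small technical point to verify is that the identification of a scattering density on $M$ with Lebesgue measure on $\RR^n$ (through $x = z_n^{-1}$, $y_j = z_j/z_n$) is an isometry of the relevant $L^2$ spaces, so that the $\RR^n$ boundedness transfers without a weight factor; this is immediate from $|dz| = \frac{|dx\,dy|}{x^{n+1}}$ recorded in the excerpt.
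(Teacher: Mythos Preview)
The paper does not actually supply a proof of this proposition: it is stated as part of the review of Melrose's scattering calculus, with the mapping property recalled as a standard fact. Your argument is correct and is the standard one --- reduce to the $(0,0)$ case via composition with elliptic operators and their parametrices, and handle the $(0,0)$ case by Calder\'on--Vaillancourt after the local identification with $\RR^n$ --- so there is nothing to compare. One tiny remark: in the second term you invoke that $CA$ maps Schwartz functions to $L^2$; to avoid any appearance of circularity you might say explicitly that scattering pseudodifferential operators map $\cS$ to $\cS$ directly from the oscillatory integral representation, which is elementary and does not use the Sobolev mapping property being proved.
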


Because the parametrix error is not only smoothing (order $-\infty$ in
the differential sense) but includes a restriction on growth rates,
the error is actually compact on any weighted Sobolev space and we can get desired Fredholm properties.

For example, $\sigma(\Delta+1) = \xisc^2 + |\etasc|^2 +1$ is elliptic in
the scattering algebra but $\sigma(\Delta - 1)  = \xisc^2 + |\etasc|^2 -
1$ is not. Both of these operators are elliptic in the standard sense,
but the scattering algebra explains why one operator has an infinite
dimensional tempered distributional nullspace and the other does not.

\subsection{The cusp double space and algebra}
We now recall the definition of the cusp pseudodifferential algebra
and its properties. It is defined on manifolds with boundary $M$ with
a boundary function $x$ defined up to adding an element of
$x^2\CI(M)$, i.e.\ any other alternative boundary defining function
for this structure is
of the form $\tilde x=x+x^2\phi$, with $\phi\in\CI(M)$. In order to do
so, we start by discussing the double space, as appears in the work of
Mazzeo and Melrose \cite{Mazzeo-Melrose:Fibred}. Indeed, these authors
provide a joint framework for the scattering and the cusp algebras
within the class of `fibred cusp' algebras. As a reference to the
terminology of this paper we mention that in the scattering algebra
case the corresponding boundary fibration is the identity map, while
in the cusp algebra setting the boundary fibration is the map that
sends every point on the boundary to a single point (so the fiber is
the whole boundary).

\begin{figure}[ht]
\begin{center}
\includegraphics[width=90mm]{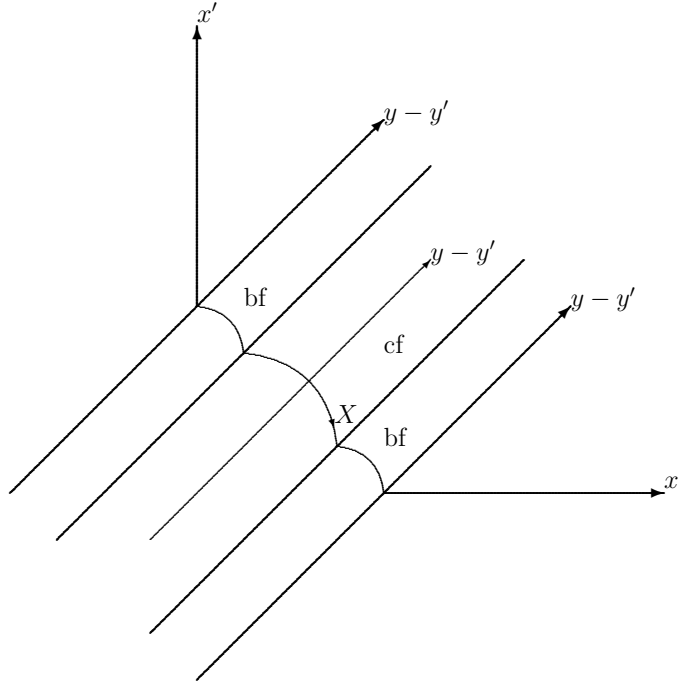}
\end{center}
\caption{The cusp double space as a blow-up of the b-double space. The
b font face is denoted by $\mathrm{bf}$, while the cusp front face is denoted by $\mathrm{cf}$.}
\label{fig:cusp-double}
\end{figure}

The double space is obtained from $M^2=M\times M$ by first performing
the b-blow up, i.e.\ blowing up $(\pa M)^2$, as in the
scattering setting, and then blowing up the lift of $x=x'$ at
$x=0$; see Figure~\ref{fig:cusp-double}. In valid coordinates on the b-double space near the lift of
$x=x'$, thus in the interior of the b-face, coordinates are
$x,y,\frac{x-x'}{x},y'$, and the lift of $x=x'$ is this
$\frac{x-x'}{x}=0$. The result of this blow-up is to obtain
coordinates
$$
x,y,X=\frac{x-x'}{x^2},y'
$$
near the interior of the front face.

Here one needs to check that the submanifold being blown up is
independent of the choice of $x$ modulo $O(x^2)$. 
We now show that if we define a new boundary defining function
$\tilde x = x + x^2 \phi$ where $\phi$ is a smooth function, then this
submanifold is unchanged.
Pulling back $\tilde x$ from the left and right
factors to the b-double space to get $\tilde x$ and $\tilde x'$,
$$
\tilde x = x + x^2 \phi(x,y) \qquad \tilde x' = x' + (x')^2 \phi(x',y'),
$$
so we can compute $\frac{x-x'}{x}$, the relevant b-front face coordinate in
the new variables, to obtain
\begin{equation*}\begin{aligned}
&\frac{\tilde x -\tilde x'}{\tilde x} = \frac{x - x' +
  x^2 \phi(x,y) - (x')^2 \phi(x',y')}{x (1+ x \phi(x,y))}\\
&= \frac{x-x'}{x} (1 + x \phi(x,y))^{-1} + x\frac{\phi(x,y) -
  \frac{(x')^2}{x^2} \phi(x',y')}{1 + x \phi(x,y)}\\
&= \frac{x-x'}{x} (1 + x \phi(x,y))^{-1} + x\frac{\phi(x,y) -
  (1-\frac{x-x'}{x})^2 \phi(x',y')}{1 + x \phi(x,y)}.
\end{aligned}\end{equation*}
Hence (since the blow-down maps are smooth, so
$x',y'$ are also smooth on the b-double space) $\frac{\tilde x -\tilde x'}{\tilde x}$ is smooth
on the b-double space and its zero set at $x=0$ is exactly the same
as that of $\frac{x-x'}{x}$. This means that the blow up creating the new double
space produces the same space as we change from $x$ to $\tilde x$ in
the definition, and thus it is well-defined independent of such choices.

A cusp pseudodifferential operator of order $m,\ell$ then has a Schwartz kernel that is
well-behaved on this double space in the sense that it is conormal to
the new, cusp, front face away
from the lifted diagonal, $\{X=0,y=y'\}$, vanishes to infinite order
at all boundary faces except the cusp front face, is conormal up
to the front face of order $\ell$, and is conormal to the diagonal of
order $m$. In particular, in a neighborhood of the diagonal it is
given by an oscillatory integral
$$
(2\pi)^{-n}\int e^{i(X\xicu+(y-y')\etacu)}a(x,y,\xicu,\etacu)\,d\xicu\,d\etacu
$$
relative to the density $\frac{|dx'\,dy'|}{(x')^{2}}$, where $a$ is a
symbol of order $m,\ell$, i.e.
$$
|\pa_x^j\pa_y^\alpha\pa_{\xicu}^k\pa_{\etacu}^\beta
a(x,y,\xicu,\etacu)|\leq Cx^{-\ell}\langle (\xicu,\etacu)\rangle^{m-k-|\beta|}.
$$

While, unlike the scattering algebra, the cusp algebra cannot be reduced modulo operators with
Schwartz Schwartz kernels to a H\"ormander algebra, in a somewhat
weaker sense, that still captures the near diagonal behavior, it can.
In this case the correspondence is with a different
H\"ormander algebra \cite{Hormander:v3} on $\RR^n$. Namely, taking symbols $a\in S_\infty^{m,l}$ on $\RR^n\times\RR^n$ such that
$$
|\pa_z^\alpha\pa_\zeta^\beta a(z,\zeta)|\leq C_{\alpha\beta} \langle
z\rangle^{l}\langle z_n\rangle^{-\alpha_n}\langle \zeta\rangle^{m-|\beta|},
$$
so the difference with the scattering case is that one only gains
$z_n$ decay upon differentiation in $z_n$ (and no decay otherwise),
and defining the Schwartz kernel of the standard, say, left quantization,
\begin{equation}\label{eq:cusp-on-Rn}
K_A(z,z')=(2\pi)^{-n}\int e^{i(z-z')\cdot\zeta}a(z,\zeta)\,d\zeta
\end{equation}
relative to the density $|dz'|$, the cusp algebra is obtained in open sets
of the form $O\times O\subset M^2$, where $O\subset M$ is identified with a
similar open set (with compact closure, if desired) in $\overline{\RR}\times\RR^{n-1}$ via a diffeomorphism by pulling back the Schwartz
kernel of an operator given by the just described left
quantization. Notice that $\overline{\RR}\times\RR^{n-1}$ corresponds
to a `cylindrical end' perspective on $\RR^n$, in which one
coordinate, say the last one, is distinguished, and the remaining ones
are required to stay in a bounded set. The reason this only captures the diagonal, thus
differential order, behavior of the cusp algebra is that the cusp
front face is global, i.e.\ includes points far from the diagonal in
$M^2$. Indeed, we can remedy this by treating the off-diagonal
behavior on $M^2$ by considering two disjoint open sets $O,U$ in $M$, mapping
them to disjoint open sets in $\overline{\RR}\times\RR^{n-1}$, and pulling back the
H\"ormander algebra Schwartz kernel from there. This perspective on
the cusp algebra was
explained in \cite{MR3792086}, and the equivalence is easily seen for
$x=z_n^{-1}$ can be taken to be the coordinate near infinity in
$\overline{\RR}$, so with $y=(z_1,\ldots,z_{n-1})$,
$\etaocu=(\zeta_1,\ldots,\zeta_{n-1})$, the phase function in
\eqref{eq:cusp-on-Rn} can be written as
$$
y\cdot \etaocu+(x^{-1}-(x')^{-1})\zeta_n,
$$
and
$$
x^{-1}-(x')^{-1}=-\frac{x-x'}{xx'}=-\frac{x}{x'}X=-(1-xX)^{-1}X,
$$
which (or
better yet, whose negative) could
have equally well been used in the definition of the cusp algebra
above. Note that the correspondence is $\zeta_n=-\xiocu$, and the
regularity of the amplitude $a$ is in terms of $x\pa_x,\pa_y$, which
equivalently means $z_n\pa_{z_n},\pa_{z_j},j=1,\ldots,n-1$, in the
relevant region, $|z_j|<C$, $z_n>1$.

Cusp pseudodifferential operators form a bi-filtered *-algebra as well
under adjoints and composition, however, the principal symbol only
captures the leading order behavior in the differential sense, thus is
insufficient to capture compactness of operators on the corresponding
cusp Sobolev spaces $\Hcu^{s,r}$. Indeed, these claims are
immediate from the just-described connection with the H\"ormander
algebra, and were proved by Mazzeo and Melrose in \cite{Mazzeo-Melrose:Fibred} using
geometric microlocal techniques.

\begin{prp}
If $A \in \Psicu^{m, \ell}$ with principal symbol, modulo
$\Psicu^{m-1,\ell}$, $[a]\in S^{m,\ell}/S^{m-1,\ell}$ and $B\in \Psicu^{m',\ell'}$ with principal symbol $[b]$ then $A \circ B \in \Psicu^{m + m', \ell + \ell'}$  with principal symbol $[a][b]=[ab]$. 
\end{prp}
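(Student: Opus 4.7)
The plan is to reduce the composition to the corresponding composition in the H\"ormander-type algebra on $\RR^n$ with symbol class $S_\infty^{m,\ell}$, which was identified in the text just before the proposition. Using locality of cusp operators on $M$ and a partition of unity, I would split the Schwartz kernels of $A$ and $B$ into (i) pieces supported in a neighborhood of the lifted diagonal within a set of the form $O\times O$ (with $O$ a coordinate chart mapped into $\overline{\RR}\times\RR^{n-1}$) where the cusp double space description applies directly, and (ii) off-diagonal pieces. For the off-diagonal pieces, the Schwartz kernels vanish to infinite order at the cusp front face and at all other boundary faces; such operators lie in $\Psicu^{-\infty,\ell}$ and compose trivially with anything in the algebra to produce operators whose Schwartz kernels are again of this type, contributing only to lower order terms.

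For the near-diagonal pieces, I would pull back via the identification $x=z_n^{-1}$, $y_j=z_j$, $1\leq j\leq n-1$, which sends the local cusp symbol class to $S_\infty^{m,\ell}$ on $\RR^n$. I would then invoke the standard H\"ormander composition theorem in this algebra: if $a\in S_\infty^{m,\ell}$, $b\in S_\infty^{m',\ell'}$, then the left quantizations compose to a left quantization with symbol $c\in S_\infty^{m+m',\ell+\ell'}$ having the asymptotic expansion
\[
c(z,\zeta)\sim\sum_\alpha \frac{1}{\alpha!}\,\pa_\zeta^\alpha a(z,\zeta)\,D_z^\alpha b(z,\zeta).
\]
Each term with $|\alpha|=k$ lies in $S_\infty^{m+m'-k,\ell+\ell'}$ because $\pa_{z_n}$ gains a factor of $\langle z_n\rangle^{-1}$ (corresponding to the $x\pa_x$ derivative on $M$), while $\pa_{z_j}$ for $j<n$ gains nothing in $z$ — exactly matching the definition of $S^{m,\ell}$ for cusp symbols. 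Since no symbol derivative improves the $\langle z\rangle^\ell$ weight, the error from truncating this expansion after the leading term lies in $S_\infty^{m+m'-1,\ell+\ell'}$, which explains why the principal symbol is well-defined only modulo $\Psicu^{m-1,\ell}$ rather than $\Psicu^{m-1,\ell-1}$ as in the scattering case.

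Patching back to $M$, the leading term $ab$ gives the principal symbol of $A\circ B$, and the $k\geq 1$ terms contribute to $\Psicu^{m+m'-1,\ell+\ell'}$. Invariance of the statement under changes of the boundary defining function modulo $O(x^2)$ and under coordinate changes on $M$ respecting the cusp structure follows from the corresponding invariance of the cusp double space constructed in the previous subsection, together with the fact that such changes induce diffeomorphisms in the $\overline{\RR}\times\RR^{n-1}$ picture that preserve the H\"ormander symbol class $S_\infty^{m,\ell}$.

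The main obstacle, if any, is purely bookkeeping: ensuring that the off-diagonal portion of the Schwartz kernel — which is genuinely global on the cusp front face — is correctly handled, since the H\"ormander model captures only the behavior near the diagonal. This is dealt with by treating pairs of disjoint coordinate charts $O,U\subset M$ and separately pulling back kernels from pairs of disjoint charts in $\overline{\RR}\times\RR^{n-1}$ as indicated in the excerpt; the composition of such pieces is controlled directly by the rapid vanishing of their kernels at the cusp front face and reduces to classical mapping properties on weighted spaces.
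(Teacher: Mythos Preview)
Your overall approach is essentially the paper's: reduce to the H\"ormander-type class $S_\infty^{m,\ell}$ on $\RR^n$ via the identification $x=z_n^{-1}$, $y_j=z_j$, and invoke the standard composition expansion there. The paper states this without further detail and also cites Mazzeo--Melrose for the geometric microlocal version.

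There is, however, a genuine misstatement in your first paragraph that you should correct, because it is precisely the point where the cusp and scattering settings differ. You write that the off-diagonal pieces of the Schwartz kernel ``vanish to infinite order at the cusp front face and at all other boundary faces.'' This is false for cusp operators: the cusp front face is \emph{global}, i.e.\ its fiber over a boundary point includes all of $\RR_X\times(\pa M)_{y'}$ rather than being localized near $y=y'$, so pieces supported away from the diagonal in $M^2$ still meet the cusp front face nontrivially. That is exactly why they lie only in $\Psicu^{-\infty,\ell}$ (as you correctly write immediately after) and not in $\Psicu^{-\infty,-\infty}$. You in fact acknowledge this correctly in your final paragraph, and your remedy there---mapping disjoint charts $O,U\subset M$ to disjoint open sets in $\overline{\RR}\times\RR^{n-1}$ and pulling back the H\"ormander kernel---is the right one and matches the paper. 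But your last sentence again invokes ``rapid vanishing of their kernels at the cusp front face,'' which repeats the error: the composition of such off-diagonal pieces is controlled by the H\"ormander calculus on $\RR^n$ in the disjoint-chart model, not by any vanishing at the front face. Once those two incorrect clauses are removed, the argument is sound.
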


\begin{prp}
If $A\in \Psicu^{m,\ell}$ with principal symbol $[a]$ is
elliptic, i.e.\ for some $c>0$,
\[ |a(x,y,\xicu, \etacu)| \ge c \langle \xicu,\etacu\rangle ^m x ^{-\ell} \text{ for } |(\xicu, \etacu)|\gg 1\]
then there is a parametrix $B\in \Psicu^{-m,-\ell}$ with error $AB-I,BA-I\in\Psicu^{-\infty, 0}$.
\end{prp}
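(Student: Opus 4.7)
The plan is a standard elliptic parametrix construction via asymptotic summation of symbol corrections, using the symbolic composition rule provided by the preceding Proposition. The single crucial feature to be careful about is that in the cusp algebra the composition formula is stated modulo $\Psicu^{m+m'-1,\ell+\ell'}$ (lowering only the differential order), not modulo $\Psicu^{m+m'-1,\ell+\ell'-1}$ as in the scattering algebra. This means each step of the iteration will gain one order in the fiber variable $(\xicu,\etacu)$ but no decay in $x$, which is exactly why the error ends up in $\Psicu^{-\infty,0}$ rather than in $\Psicu^{-\infty,-\infty}$.

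To carry this out, I first use ellipticity to produce an initial approximate inverse. Choose a smooth cutoff $\chi(\xicu,\etacu)$ equal to $1$ for $|(\xicu,\etacu)|$ large and vanishing on a neighborhood of the origin; then on the support of $\chi$ the function $b_0 = \chi/a$ is smooth, and a routine induction using the ellipticity lower bound together with the quotient rule shows $b_0 \in S^{-m,-\ell}$. Quantize to $B_0 \in \Psicu^{-m,-\ell}$. By the composition proposition, the principal symbol of $AB_0$ modulo $\Psicu^{-1,0}$ is $ab_0 = \chi$, which differs from $1$ only on a compact set in the fibers, so $AB_0 - I \in \Psicu^{-1,0}$. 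Inductively, suppose we have constructed $B_0,\dots,B_{N}$ with $B_j \in \Psicu^{-m-j,-\ell}$ and $A(B_0+\dots+B_N) - I = R_N \in \Psicu^{-N-1,0}$ with principal symbol $r_N \in S^{-N-1,0}/S^{-N-2,0}$. Setting $b_{N+1} = -\chi r_N/a \in S^{-m-N-1,-\ell}$ and $B_{N+1}$ its quantization, the composition formula gives $A(B_0+\dots+B_{N+1}) - I \in \Psicu^{-N-2,0}$. Asymptotically sum the series, which is possible within cusp symbols since they are conormal in $x$ with no required decay (so Borel summation applies in the $(\xicu,\etacu)$ variables uniformly in $(x,y)$), to obtain $B \in \Psicu^{-m,-\ell}$ with $B - \sum_{j=0}^N B_j \in \Psicu^{-m-N-1,-\ell}$ for every $N$. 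Then $AB - I \in \bigcap_N \Psicu^{-N-1,0} = \Psicu^{-\infty,0}$.

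To complete the proof, perform the same construction from the other side (replacing $b_0 = \chi/a$ by the same formula but iterating on the left) to obtain a left parametrix $B' \in \Psicu^{-m,-\ell}$ with $B'A - I \in \Psicu^{-\infty,0}$. The standard identity
\[
B - B' = B'(AB - I) - (B'A - I)B
\]
together with the fact that $\Psicu^{-\infty,0}$ is an ideal under composition with the filtered algebra shows $B - B' \in \Psicu^{-\infty,0}$, and hence $BA - I \in \Psicu^{-\infty,0}$ as well.

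The only real subtlety, as already noted, is the bookkeeping of the spatial order: one must track that no step of the iteration improves the weight in $x$, which is forced by the form of the cusp symbol estimates (differentiating in $x$ costs nothing). This is what prevents the error from being residual in the strongest sense, and it is consistent with the earlier remark that cusp parametrix errors are only smoothing in the differential sense and hence not automatically compact on $\Hcu^{s,r}$.
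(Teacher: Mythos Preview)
Your argument is correct: it is the standard symbolic iteration, and you have correctly identified the key point that in the cusp calculus the principal symbol map has kernel $\Psicu^{m-1,\ell}$ rather than $\Psicu^{m-1,\ell-1}$, so each step of the iteration gains a differential order but no decay, leaving the error in $\Psicu^{-\infty,0}$.

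The paper itself does not supply a proof of this proposition. It simply records it as a known fact, remarking that it is immediate from the identification of the cusp algebra with a H\"ormander-type algebra on $\RR^n$ (quantizing symbols satisfying $|\pa_z^\alpha\pa_\zeta^\beta a|\leq C\langle z\rangle^{l}\langle z_n\rangle^{-\alpha_n}\langle\zeta\rangle^{m-|\beta|}$), and citing Mazzeo--Melrose for the geometric microlocal treatment. Your symbolic iteration is exactly how one extracts the statement from that H\"ormander-algebra description, so there is no divergence in approach---you have just written out what the paper leaves implicit.
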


\begin{prp}
If $A\in \Psicu^{m,\ell}$, then $A: \Hcu^{s,r} \to \Hcu^{s - m, r - \ell}$.
\end{prp}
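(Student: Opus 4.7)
The plan is to reduce the mapping property to the $L^2$-boundedness of operators of order $(0,0)$, and then shift indices by composing with elliptic operators on either side.

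\textbf{Step 1: $L^2$-boundedness of order $(0,0)$ operators.} The first, and central, step is to show that any $A\in\Psicu^{0,0}$ is bounded on $L^2_{\mathrm{cu}}:=L^2(M;\tfrac{|dx\,dy|}{x^{2}})$. I would do this by invoking the local reduction to the Hörmander algebra on $\overline{\RR}\times\RR^{n-1}$ that the excerpt has just described around \eqref{eq:cusp-on-Rn}: using a finite partition of unity on $M$ subordinate to coordinate charts, I can split $A$ as a sum of pieces of two types. The near-diagonal pieces, supported in $O\times O$, pull back to operators quantizing symbols $a\in S^{0,0}_\infty$ on $\RR^n$; these are bounded on $L^2(\RR^n, |dz|)$ by Hörmander's classical theorem, and the change of density between $|dz|$ and $\tfrac{|dx\,dy|}{x^{2}}$ is bounded. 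The far-from-diagonal pieces, supported in disjoint coordinate neighborhoods, are smoothing and conormal of order $0$ on the cusp front face while vanishing to infinite order at the other boundary faces, so their $L^2_{\mathrm{cu}}$-boundedness follows from Schur's test applied to their Schwartz kernels. An alternative route, purely algebraic, would be the square-root trick: for $A\in\Psicu^{0,0}$ with $\|a\|_\infty\le C^{1/2}$, the composition proposition gives $C\Id-A^*A\in\Psicu^{0,0}$ with a non-negative principal symbol, from which one constructs a self-adjoint $B\in\Psicu^{0,0}$ with $B^*B=C\Id-A^*A+R$, $R\in\Psicu^{-\infty,0}$, and iterates to absorb the residual.

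\textbf{Step 2: Characterization of $\Hcu^{s,r}$ via elliptic operators.} For each $(s,r)$ I fix an elliptic $\Lambda^{s,r}\in\Psicu^{s,r}$ (for instance quantizing $x^{-r}\langle\xicu,\etacu\rangle^{s}$). Then $\Hcu^{s,r}$ admits the equivalent description
\[
\Hcu^{s,r}=\{u\in\cS'(M):\Lambda^{s,r}u\in L^2_{\mathrm{cu}}\},
\]
with norm $\|u\|_{\Hcu^{s,r}}=\|\Lambda^{s,r}u\|_{L^2_{\mathrm{cu}}}$. Independence of the choice of $\Lambda^{s,r}$ is a consequence of Step 1 together with the parametrix proposition: any two such choices $\Lambda^{s,r}$, $\tilde\Lambda^{s,r}$ satisfy $\Lambda^{s,r}=Q\tilde\Lambda^{s,r}+R$ with $Q\in\Psicu^{0,0}$ (bounded on $L^2_{\mathrm{cu}}$) and $R\in\Psicu^{-\infty,0}$, and the residual $R$ itself preserves $L^2_{\mathrm{cu}}$ by the same Schur-type argument as in Step 1.

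\textbf{Step 3: The general mapping property.} Given $A\in\Psicu^{m,\ell}$ and $u\in\Hcu^{s,r}$, the claim $Au\in\Hcu^{s-m,r-\ell}$ amounts to $\Lambda^{s-m,r-\ell}Au\in L^2_{\mathrm{cu}}$. Let $E\in\Psicu^{-s,-r}$ be a parametrix for $\Lambda^{s,r}$ with $E\Lambda^{s,r}-\Id=R\in\Psicu^{-\infty,0}$. Then
\[
\Lambda^{s-m,r-\ell}Au=\bigl(\Lambda^{s-m,r-\ell}AE\bigr)\bigl(\Lambda^{s,r}u\bigr)-\Lambda^{s-m,r-\ell}AR\,u.
\]
The composition proposition gives $\Lambda^{s-m,r-\ell}AE\in\Psicu^{0,0}$, which is bounded on $L^2_{\mathrm{cu}}$ by Step 1, and $\Lambda^{s,r}u\in L^2_{\mathrm{cu}}$ by assumption, so the first term is controlled. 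The second term is handled by showing that any operator in $\Psicu^{-\infty,0}$ maps $\Hcu^{s,r}$ to $\Hcu^{s',r}$ for every $s'$: this in turn reduces, via Step 2 and composition, back to $L^2_{\mathrm{cu}}$-boundedness of order $(0,0)$ operators. Combining yields $\|Au\|_{\Hcu^{s-m,r-\ell}}\lesssim\|u\|_{\Hcu^{s,r}}$.

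\textbf{Main obstacle.} The genuinely non-trivial step is Step 1, because the cusp algebra — unlike the scattering algebra — is non-local at the cusp front face, so the Hörmander reduction only captures the near-diagonal behavior and the off-diagonal contribution must be treated separately. Once $L^2_{\mathrm{cu}}$-boundedness at order $(0,0)$ is secured, the passage to arbitrary Sobolev indices is a purely formal application of the composition and parametrix propositions already established in the excerpt.
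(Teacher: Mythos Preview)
The paper does not actually prove this proposition; it simply records it, remarking that the claims ``are immediate from the just-described connection with the H\"ormander algebra'' on $\overline{\RR}\times\RR^{n-1}$ and citing Mazzeo--Melrose for the geometric treatment. Your sketch is precisely the standard argument the paper is alluding to, and your identification of Step~1 as the only substantive point --- with the H\"ormander reduction handling the near-diagonal pieces and a Schur estimate handling the off-diagonal contribution along the cusp front face --- is correct.

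One caveat: Step~3 as written has a small circularity. You reduce control of $\Lambda^{s-m,r-\ell}Au$ to control of $\Lambda^{s-m,r-\ell}ARu$ with $R\in\Psicu^{-\infty,0}$, and then assert that this follows from the mapping property of $\Psicu^{-\infty,0}$ on $\Hcu^{s,r}$. But that is a special case of what you are proving, and since the cusp parametrix error lies only in $\Psicu^{-\infty,0}$ rather than $\Psicu^{-\infty,-\infty}$, iterating the parametrix does not eliminate it. The clean fix is either to take $\Lambda^{s,r}$ genuinely invertible (e.g.\ $x^{-r}$ times a spectrally defined power of $\Id+D$ for a non-negative self-adjoint cusp differential operator $D$, so that $u=(\Lambda^{s,r})^{-1}\Lambda^{s,r}u$ with no remainder), or to first establish the mapping property for integer differential orders directly from the vector-field definition of $\Hcu^{k,r}$ and then interpolate. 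With either adjustment the argument closes.
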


Mazzeo and Melrose \cite{Mazzeo-Melrose:Fibred} define a normal operator to improve on this last
result and thus obtain compact errors, but we shall not need this
since our new algebra will have properties more akin to those of the
scattering algebra.

\subsection{The 1-cusp double space and the cusp pseudodifferential operators}
The simplest way to obtain the 1-cusp double space is from the cusp
one by blowing up the boundary of the lifted diagonal
$\{X=0,y-y'=0\}$, i.e.\ $\{X=0,y-y'=0,x=0\}$; see Figure~\ref{fig:1cusp-double}. Notice that the lifted
diagonal indeed only intersects the cusp front face (in particular
does not intersect the b-front face), so local coordinates in the
interior of the cusp front face can be used. Since this submanifold is purely
geometric, it does not depend on any additional information beyond
what went into the definition of the cusp double space, namely the
boundary defining function defined up to $O(x^2)$ terms. Concretely,
in a neighborhood of the interior of the front face $x$ is relatively
large, and we thus obtain coordinates
$$
x,y,V=\frac{X}{x}=\frac{x-x'}{x^3},Y=\frac{y-y'}{x}.
$$
The Schwartz kernels of our new operators then are required to be
well-behaved on the new double space in the sense that they are
conormal to the new, 1-cusp, front face
away from the lifted diagonal, $\{V=0,\ Y=0\}$, vanish to infinite
order at all boundary faces, are conormal to the 1-cusp face of order
$\ell$ and to the lifted diagonal of order $m$. In particular, in a
neighborhood of the lifted diagonal they are given by an oscillatory
integral
\begin{equation}\label{eq:SK-osc-int}
  K_A(x,y,V, Y) = 
(2\pi)^{-n}\int e^{i(V\xiocu+Y\etaocu)}a(x,y,\xiocu,\etaocu)\,d\xiocu\,d\etaocu
\end{equation}
relative to the density $\frac{|dx'\,dy'|}{(x')^{n+2}}$, which arises
from Jacobian factors caused by the blow-ups and which we explain
below, where $a$ is a `product type'
symbol of order $m,\ell$, i.e.
\begin{equation}\label{eq:1c-symbol-est}
|\pa_x^j\pa_y^\alpha\pa_{\xiocu}^k\pa_{\etaocu}^\beta
a(x,y,\xiocu,\etaocu)|\leq Cx^{-\ell}\langle (\xiocu,\etaocu)\rangle^{m-k-|\beta|}.
\end{equation}
On the other hand, away from the lifted diagonal but near the 1-cusp front
face the Schwartz kernel satisfies estimates
\begin{equation}\label{eq:conormal-to-ff}
|V^i Y^\gamma (x\pa_x)^j\pa_y^\alpha \pa_V^k\pa_Y^\beta K_A(x,y,V,Y)|\leq Cx^{-\ell},
\end{equation}
with $C$ depending on the indices $i,j,k,\alpha,\beta,\gamma$,
where $i,j,k\in\NN$, $\alpha,\beta,\gamma\in\NN^{n-1}$, which also encodes,
via the powers of $V,Y$, the rapid decay to the cusp front face near
the corner. Notice that as all the ingredients of the definition are
diffeomorphism invariant, so is the class of 1-cusp pseudodifferential
operators.

\begin{figure}[ht]
\begin{center}
\includegraphics[width=110mm]{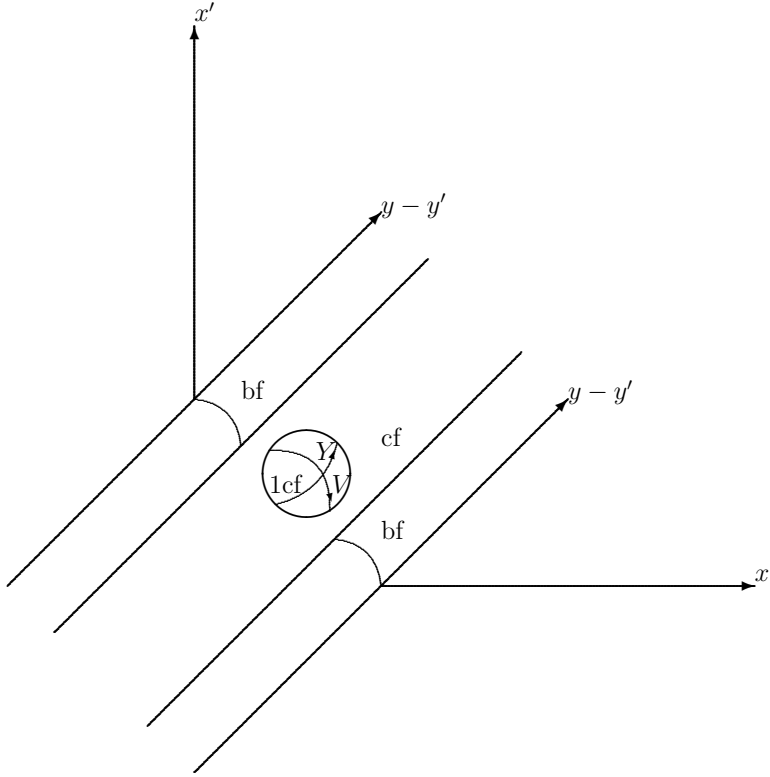}
\end{center}
\caption{The 1-cusp double space as a blow-up of the cusp double space. The
b font face is denoted by $\mathrm{bf}$, the cusp front face is
denoted by $\mathrm{cf}$, while the 1-cusp front face by $\mathrm{1cf}$.}
\label{fig:1cusp-double}
\end{figure}

While strictly speaking, by the definition of conormal distributions, \eqref{eq:SK-osc-int} is to be interpreted as
a local oscillatory integral, i.e.\ valid with $V,Y$ bounded, one {\em
  can} interpret it more globally. The reason is that by the basic
properties of the Fourier transform, outside $V=0$, $Y=0$, it produces
a Schwartz function in $(V,Y)$ with values in conormal functions of
$(x,y)$, i.e.\ \eqref{eq:conormal-to-ff} holds for the right hand side
of \eqref{eq:SK-osc-int} regardless of $m,\ell$.

We can shed some light on this algebra by also relating its double
space to that of the
scattering algebra; this relation is of some importance since the
1-cusp algebra itself arises for us in the setting of an asymptotically conic metric,
which is naturally described by, and in particular has its
bicharacteristics described by, the scattering geometry. Namely, for
this perspective, within the scattering double space, one blows up
the lift of $x=x'$, i.e.\ in local coordinates $\{x=0,X=0\}$,
intersected with
the scattering front face, $x=0$. In the interior of the new front
face this indeed produces local coordinates
$$
x,y,V=\frac{X}{x},Y,
$$
matching those of the blow-up obtained from the cusp algebra, and
establishing a natural local (in the region of validity of the two
coordinates) diffeomorphism between the two spaces. A
subtlety here, however, is that, unlike for the cusp approach above, the manifold we blow up intersects
faces other than the scattering front face as well, namely the
boundary of the scattering front face, so for a full discussion from
this perspective valid coordinates must also be described and used in those
regions. Another potential issue, which however is easily resolved
using a straightforward
modification of the above computation that the cusp algebra is well-defined, is that we need to check that the
submanifold being blown up is well defined if $x$ is only well-defined
up to adding $O(x^2)$ terms. In any
case, this well-definedness statement follows from the just established diffeomorphism, at least in the
interior of the new front faces.
The figure below represents this new double-space.

\begin{figure}[ht]\begin{center}
    
\includegraphics[width=12cm]
{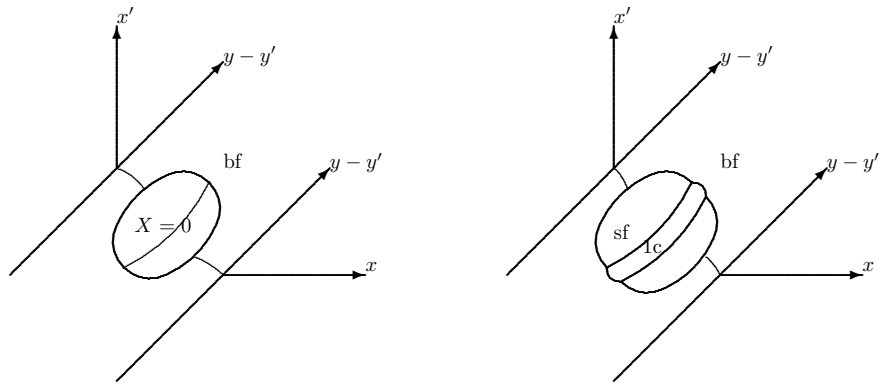}
\end{center}
\caption{1-cusp double space by a
    blow-up of the scattering double space: on the left the
    submanifold $X=0$ of the scattering front face, $\mathrm{sf}$, and
    on the right the resulting resolution, with the front face
    labelled by $\mathrm{1c}$. A neighborhood of the interior of the
    front face is naturally diffeomorphic to the front face
    $\mathrm{1cf}$ shown in Figure~\ref{fig:1cusp-double} in the sense
    that the identity map between the interiors of $M^2$ smoothly
    extends up to these boundary faces, but the
    boundary of the two front faces is quite different.}\label{hmm}  \end{figure}

The shaded portion in Figure~\ref{hmm} corresponds to the new front
face, and $V$ and $Y$ are coordinates for this new front face (along
with $y$, not shown in this picture).

While the identity map in the interior does not induce a global
diffeomorphism between the space we just obtained and our 1-cusp double
space, for instance due to intersection of the 1-cusp front face
intersecting the b front face (unlike from the cusp approach), the
space of conormal distributions, conormal to the diagonal and the
front face, and vanishing to infinite order at every other face, is the
same. Thus, we can consider the 1-cusp pseudodifferential operators
both in relation to the cusp ones and to the scattering ones.

This definition is thus analogous to the geometric definition of
scattering and cusp pseudodifferential operators earlier. As with
scattering and cusp operators, smoothing
operators (elements of $\Psiocu^{-\infty,\ell}$) have Schwartz kernels without a conormal singularity along
the diagonal, and residual operators (elements of $\Psiocu^{-\infty,-\infty}$) have Schwartz kernels which
additionally vanish to infinite order on the new front face. Therefore
residual operators have Schwartz kernels which are also residual on
the blown-down scattering or cusp space, so that they are residual
scattering and cusp
operators.

As already mentioned, if $A\in\Psiocu^{-\infty,\ell}$, $K_A$ satisfies
\eqref{eq:conormal-to-ff}, say near the interior of the cusp face, i.e.\
where $x|V|,x|Y|<C$, and if $v$ is conormal of order $r$, i.e.\
$|(x\pa_x)^j\pa_y^\alpha v|\leq C_{j\alpha} x^{-r}$, then
\begin{equation*}\begin{aligned}
(Av)(x,y)&=\int
K_A\Big(x,y,\frac{x-x'}{x^3},\frac{y-y'}{x}\Big)v(x',y')\,\frac{dx'\,dy'}{(x')^{n+2}}\\
&=\int K_A(x,y,V,Y) v(x-x^3V,y-xY)\,dV\,dY,
\end{aligned}\end{equation*}
which is immediately seen (by applying products of $x\pa_x$ and
$\pa_y$, and using the rapid decay of $K_A$ in $V,Y$) to be conormal of order $r+\ell$ (and indeed
$\CI$ if $K_A$ and $v$ are actually smooth), i.e.\ the orders add up; the Jacobian in the
change of variables is the reason for the normalization of the density
that we have adopted.

Just as in the scattering and cusp cases, there is a simple way to
reduce this to an algebra in $\RR^n$. In this case
$$
x,y,\tilde V=\frac{1}{x^2}-\frac{1}{(x')^2},\ \tilde Y=\frac{y}{x}-\frac{y'}{x'}
$$
also give valid coordinates in the interior of the 1c-front
face. Indeed,
$$
\tilde V=\frac{(x'-x)(x'+x)}{x^2(x')^2}=-V\frac{x(x'+x)}{(x')^2}=-V\frac{2-x^2V}{(1-x^2V)^2}
$$
as $\frac{x'}{x}=1-x^2V$, and
$$
\tilde Y=\frac{y-y'}{x}+\Big(\frac{1}{x'}-\frac{1}{x}\Big)y'=Y+xV(1-x^2V)^{-1}(y-xY),
$$
and indeed at $x=0$, we have $\tilde V=-2V$, $\tilde Y=Y$; the
converse direction is similar. Thus, we can equivalently write in
place of \eqref{eq:SK-osc-int}
\begin{equation}\label{eq:SK-osc-int-tilde}
  K_A(x,y,\tilde V, \tilde Y) = {(2\pi)^{-n}}\int e^{i \widetilde{\xiocu} \tilde V
    + \widetilde{\etaocu} \cdot \tilde Y}\tilde a(x,y,\widetilde{\xiocu},
  \widetilde{\etaocu})\, d\widetilde{\xiocu}\, d\widetilde{\etaocu},
\end{equation}
with $\tilde a\in S^{m,l}$, i.e.\ satisfying the same kinds of
`product type' symbol
estimates \eqref{eq:1c-symbol-est}.
But then with $z_n=1/x^2$, $z_j=y_j/x$ ($j=1,\ldots,n-1$), so $x=z_n^{-1/2}$,
$y_j=z_j/z_n^{1/2}$, in the region where $(x,y_j)$ is bounded, i.e.\
$z_n$ bounded away from $0$ and $|z_j|<Cz_n^{1/2}$, this amounts to
exactly an oscillatory integral of the form
\begin{equation}\label{eq:SK-osc-int-Rn}
 {(2\pi)^{-n}}\int e^{i (\zeta_n (z_n-z_n')
    + \sum_{j=1}^{n-1}\zeta'_j(z_j-z'_j))}\tilde a(z,\zeta)\, d\zeta,
\end{equation}
relative to the density $|dz'|=2\frac{|dx'\,dy'|}{(x')^{n+2}}$,
with $\tilde a$ well-behaved (conormal) in terms of $z_n^{-1/2}$,
$z_j/z_n^{1/2}$, i.e.\ in a parabolic compactification, with the
relevant region being $z_n>1$, $|z_j|<Cz_n^{1/2}$,
$j=1,\ldots,n-1$. Concretely, as
$$
x\pa_x=-2z_n\pa_{z_n}-\sum_{j=1}^{n-1}z_j\pa_{z_j}=-2z_n\pa_{z_n}-\sum_{j=1}^{n-1}\frac{z_j}{z_n^{1/2}}z_n^{1/2}\pa_{z_j}
$$
and
$$
\pa_{y_j}=z_n^{1/2}\pa_{z_j},
$$
this means that, in this region, iterated regularity of $\tilde a$
with respect to $x\pa_x$ and $\pa_{y_j}$ is equivalent to that with
respect to
$z_n\pa_{z_n}$ and $z_n^{1/2}\pa_{z_j}$. One
can instead work globally (using $\langle z\rangle$ in place of $z_n$) with symbol estimates
\begin{equation}\label{eq:Rn-symbol-estimates}
|\pa_z^\alpha\pa_\zeta^\beta \tilde a(z,\zeta)|\leq
C_{\alpha\beta}\langle\zeta\rangle^{m-|\beta|}\langle z\rangle^{\ell/2-|\alpha|/2-\alpha_n/2}.
\end{equation}
Here the $\ell/2$ in the power of $\langle z\rangle$ arises from
$z_n=x^{-2}$ in the relevant region, so this power is locally
equivalent to $x^{-\ell}$.
We define \eqref{eq:Rn-symbol-estimates} as the parabolic symbol class
$S^{m,\ell/2}_{\para}$, and write the corresponding pseudodifferential
operators, via \eqref{eq:SK-osc-int-Rn} as $\Psipara^{m,\ell/2}$.
Notice that the basis for 1-cusp vector  fields (over $\CI$ functions
of $x,y$, or equivalently of $z_n^{-1/2}$,
$z_j/z_n^{1/2}$) is
\begin{equation}\begin{aligned}\label{eq:vfs-ocu-Rn}
x^3\pa_x
&=-2\pa_{z_n}-\sum_{j=1}^{n-1}\frac{z_j}{z_n^{1/2}}z_n^{-1/2}\pa_{z_j},\
x\pa_{y_j}=\pa_{z_j},\ j=1,\ldots,n-1,
\end{aligned}\end{equation}
which is equivalent to $\pa_{z_1},\ldots,\pa_{z_n}$.

\subsection{Algebraic properties}\label{sec:algebra}
Given the identification of $\Psiocu^{m,\ell}$ locally with the
pseudodifferential operators $\Psipara^{m,\ell/2}$ on $\RR^n$, the
algebra properties of $\Psiocu$ follow immediately from those
of $\Psipara$. The latter in turn are immediate with the standard
composition, etc, formulae on $\RR^n$, applicable even in H\"ormander's algebra
$\Psi_\infty$ (with just uniform $z$ estimates, without decay on
differentiation). Note that the principal symbol of $A \in
\Psiocu^{m, \ell}$ needs to be understood modulo $S^{m-1,\ell-1}$, for
this corresponds to the statement that for $\tilde
A\in\Psipara^{m,\ell/2}$, the principal symbol is in
$S^{m,\ell/2}/S^{m-1,\ell/2-1/2}$ in view of the defining estimate and
the standard symbol expansion.

\begin{prp}\label{prop:1c-comp}
If $A \in \Psiocu^{m, \ell}$ with principal symbol, modulo
$\Psiocu^{m-1,\ell-1}$, $[a]\in S^{m,\ell}/S^{m-1,\ell-1}$ and $B\in \Psiocu^{m',\ell'}$ with principal symbol $[b]$ then $A \circ B \in \Psiocu^{m + m', \ell + \ell'}$  with principal symbol $[a][b]=[ab]$. 
\end{prp}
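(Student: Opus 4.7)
The plan is to reduce the statement to the standard composition formula on $\RR^n$, via the identification $\Psiocu^{m,\ell} \leftrightarrow \Psipara^{m,\ell/2}$ already set up in the preceding subsection through the coordinates $z_n = x^{-2}$, $z_j = y_j/x$. First, I would use a partition of unity on $M$ subordinate to coordinate charts to split any $A \in \Psiocu^{m,\ell}$ as $A = \sum_{i,j} \chi_i A \chi_j$, and separately treat the terms for which $\supp \chi_i \cap \supp \chi_j \neq \emptyset$ (the near-diagonal pieces, which can be analyzed inside a single coordinate chart via the $\RR^n$ identification) and those for which it is empty (the off-diagonal pieces, whose Schwartz kernels are Schwartz in $(V,Y)$ with conormal coefficients in $(x,y)$ and vanish to infinite order at every boundary face other than the 1-cusp front face).

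For the near-diagonal pieces, I would invoke the H\"ormander composition formula for left quantizations on $\RR^n$:
\[
q_L(\tilde a) \circ q_L(\tilde b) = q_L(\tilde a \sharp \tilde b), \qquad \tilde a \sharp \tilde b \sim \sum_{\alpha} \tfrac{1}{\alpha!} (\pa_\zeta^\alpha \tilde a)(D_z^\alpha \tilde b).
\]
The key verification is that this asymptotic expansion respects the parabolic symbol class \eqref{eq:Rn-symbol-estimates}: using the weights $\langle \zeta \rangle^{m - |\beta|}$ and $\langle z\rangle^{\ell/2 - |\alpha|/2 - \alpha_n/2}$, the $\alpha$-th term lies in $S_\para^{m+m'-|\alpha|, (\ell+\ell')/2 - |\alpha|/2 - \alpha_n/2} \subset S_\para^{m+m'-|\alpha|, (\ell+\ell')/2 - |\alpha|/2}$, so each step improves the differential order by $1$ and the parabolic weight by $1/2$. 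Translating back via the identification, in which $x^{-\ell}$ corresponds to $\langle z\rangle^{\ell/2}$ in the relevant region, each step improves the 1-cusp order by $(1,1)$; hence the leading term $\tilde a \tilde b$ furnishes the principal symbol $[ab]$ modulo $\Psiocu^{m+m'-1,\ell+\ell'-1}$, as required.

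For the off-diagonal pieces, a direct integration of the form displayed in the excerpt for residual operators acting on conormal functions, iterated on both factors and combined with the rapid decay of the kernels in $(V,Y)$, shows that the composition of anything of this form with an arbitrary element of $\Psiocu^{m',\ell'}$ (on either side) lies in $\Psiocu^{-\infty,\ell+\ell'}$; in particular, all cross-terms contribute only to the residual part of $A \circ B$ and do not affect the principal symbol. The main technical obstacle will be verifying that the parabolic composition bookkeeping matches the $\Psiocu$ orders exactly — specifically, that the half-integer improvements in the parabolic weight correspond precisely to the integer improvements in $\ell$ on the 1-cusp side — but this follows mechanically from the coordinate change and the structure of the 1-cusp vector fields exhibited in \eqref{eq:vfs-ocu-Rn}, since each generator $x^3\pa_x$ or $x\pa_{y_j}$ corresponds to one of $\pa_{z_1},\dots,\pa_{z_n}$.
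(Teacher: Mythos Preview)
Your proposal is correct and takes essentially the same approach as the paper: the paper's argument for this proposition is the single paragraph preceding its statement, which says that the algebra properties of $\Psiocu$ follow immediately from those of $\Psipara$ via the identification $\Psiocu^{m,\ell}\leftrightarrow\Psipara^{m,\ell/2}$ and the standard H\"ormander composition formula on $\RR^n$, with the half-step parabolic improvement $S^{m,\ell/2}/S^{m-1,\ell/2-1/2}$ corresponding exactly to the integer step $S^{m,\ell}/S^{m-1,\ell-1}$ on the 1-cusp side. You have simply spelled out the localization and the symbol bookkeeping that the paper leaves implicit.
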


As usual, this implies that there is a parametrix for elliptic
operators:

\begin{prp}\label{prop:1c-param}
If $A\in \Psiocu^{m,\ell}$ with principal symbol $a$ is
elliptic, i.e.\ for some $c>0$,
\[ |a(x,y,\xiocu,\etaocu)| \ge C \langle \xiocu,\etaocu\rangle ^m x ^{-\ell} \text{ for } |(\xiocu,\etaocu)|\gg 1 \text{ or } x \ll 1\]
then there is a parametrix $B\in \Psiocu^{-m,-\ell}$ with error in $\Psiocu^{-\infty, -\infty}$.
\end{prp}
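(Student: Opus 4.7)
The plan is to carry out the standard symbolic parametrix construction, leveraging the symbolic composition formula of Proposition~\ref{prop:1c-comp} and the local identification of $\Psiocu^{m,\ell}$ with the parabolic algebra $\Psipara^{m,\ell/2}$ on $\RR^n$ that was established in the previous subsection. In effect, everything reduces to the well-known construction of a parametrix for an elliptic operator in a doubly-filtered $*$-algebra whose composition is symbolic to leading order in both orders.

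First I would define a principal candidate: by the ellipticity hypothesis, the symbol $a$ satisfies $|a(x,y,\xiocu,\etaocu)|\geq c\langle(\xiocu,\etaocu)\rangle^m x^{-\ell}$ for $(\xiocu,\etaocu)$ large or $x$ small. Choose a cutoff $\chi(x,\xiocu,\etaocu)$ that equals $1$ on the region where this lower bound holds and is supported slightly inside it; then set $b_0 = \chi/a$. The symbol estimates \eqref{eq:1c-symbol-est} for $a$, together with the elliptic lower bound, imply that $b_0 \in S^{-m,-\ell}$ (the quotient rule produces at each differentiation one factor $a^{-1}$ controlled by $\langle(\xiocu,\etaocu)\rangle^{-m}x^\ell$ and a loss compensated by the symbolic estimates). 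Quantize $b_0$ via \eqref{eq:SK-osc-int} to obtain $B_0\in\Psiocu^{-m,-\ell}$.

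Next, apply Proposition~\ref{prop:1c-comp}: $AB_0\in\Psiocu^{0,0}$ has principal symbol $[a b_0]=[1]$, so $AB_0 = I - R_1$ with $R_1\in\Psiocu^{-1,-1}$. Iterate in the usual fashion: construct $b_j\in S^{-m-j,-\ell-j}$ so that the quantization $B_N=\sum_{j=0}^N \mathrm{Op}(b_j)$ satisfies $AB_N - I\in\Psiocu^{-N-1,-N-1}$. Each $b_j$ is obtained algebraically as the principal symbol of $R_1^j b_0$, which lies in the correct class by Proposition~\ref{prop:1c-comp}. Then a Borel-type asymptotic summation, done on the $\RR^n$ side via the $S_\para$ estimates \eqref{eq:Rn-symbol-estimates} (which reduces to the classical Borel lemma for symbols with an extra bi-filtration parameter), produces $b\in S^{-m,-\ell}$ with $b \sim \sum_j b_j$. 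The quantization $B=\mathrm{Op}(b)\in\Psiocu^{-m,-\ell}$ then satisfies $AB-I\in\bigcap_N \Psiocu^{-N,-N} = \Psiocu^{-\infty,-\infty}$.

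The same argument (now using the symbol of $BA-I$ and solving a transport-type equation symbolically) produces a left parametrix $B'\in\Psiocu^{-m,-\ell}$ with $B'A-I\in\Psiocu^{-\infty,-\infty}$; the standard identity $B'(AB)-(B'A)B\in\Psiocu^{-\infty,-\infty}$ then shows $B-B'\in\Psiocu^{-\infty,-\infty}$, so $B$ serves as a two-sided parametrix. The main obstacle, such as it is, is the Borel summation step: one must verify that the asymptotic sum can be realized within the 1-cusp symbol class globally in $(\xiocu,\etaocu)$ and uniformly down to $x=0$. This is the only place where the bi-filtered structure matters non-trivially, but the parabolic symbol correspondence \eqref{eq:Rn-symbol-estimates} transforms this into the standard Borel construction for $\Psi_\para$ on $\RR^n$, which is routine. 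All other steps are pure symbol algebra and require no new estimates beyond what Proposition~\ref{prop:1c-comp} provides.
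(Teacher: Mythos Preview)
Your proposal is correct and follows essentially the same route as the paper: the paper does not spell out a detailed proof but simply remarks that, via the local identification of $\Psiocu^{m,\ell}$ with $\Psipara^{m,\ell/2}$ on $\RR^n$, the parametrix construction ``follows immediately'' from the standard symbolic calculus for Hörmander-type algebras on $\RR^n$, which is precisely the construction you have written out.
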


The positive integer order 1-cusp Sobolev spaces can be defined via regularity
with respect to the 1-cusp vector fields, i.e.\ $u\in \Hocu^s$ if $u\in
L^2$ (relative to a 1-cusp density, $\frac{|dx\,dy|}{x^{n+2}}$), and
$V_1\ldots V_j u\in L^2$ as well if $j\leq s$ and $V_j\in\Vocu$; the
negative integer ones then can be defined via duality. The
weighted spaces $\Hocu^{s,r}$ are $x^r\Hocu^s$. Equivalently, we can say, for
any real $s,r$, writing $\cS'$ for the space of tempered distributions
on $M$, i.e.\ the dual of $\CI$ functions vanishing to infinite order
at $\pa M$,
$$
\Hocu^{s,r}=\{u\in\cS':\ \exists A\in\Psiocu^{s,r}\ \text{elliptic}\
\text{and}\ Au\in L^2\}.
$$
In view of the identification of the 1-cusp vector fields with those
on $\RR^n$,
\eqref{eq:vfs-ocu-Rn}, respectively that of the pseudodifferential
algebras, on $\RR^n$ these Sobolev spaces correspond to the standard
weighted Sobolev space $H^{s,r/2}$, and the following mapping result
is immediate.

\begin{prp}
If $A\in \Psiocu^{m,\ell}$, then $A: \Hocu^{s,r} \to \Hocu^{s - m, r - \ell}$. 
\end{prp}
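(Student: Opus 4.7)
The plan is to reduce the mapping statement to its standard counterpart on $\RR^n$ via the local identification of $\Psiocu^{m,\ell}$ with the parabolic H\"ormander class $\Psipara^{m,\ell/2}$ established in the preceding subsections. Using the elliptic characterization $\Hocu^{s,r}=\{u\in\cS':\ Eu\in L^2\ \text{for some elliptic}\ E\in\Psiocu^{s,r}\}$ together with the composition formula (Proposition~\ref{prop:1c-comp}) and the parametrix construction (Proposition~\ref{prop:1c-param}), the claim that $A\in\Psiocu^{m,\ell}$ sends $\Hocu^{s,r}$ into $\Hocu^{s-m,r-\ell}$ can be peeled down to two facts: the $L^2$-boundedness of every element of $\Psiocu^{0,0}$ (with respect to the 1-cusp density $\frac{|dx\,dy|}{x^{n+2}}$), and the triviality of the residual class. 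The residual piece, with Schwartz kernel satisfying \eqref{eq:conormal-to-ff} and decaying rapidly in $V,Y$, acts via the formula $(Au)(x,y)=\int K_A(x,y,V,Y)u(x-x^3V,y-xY)\,dV\,dY$ already displayed in the text; by applying $x\pa_x$ and $\pa_y$ and using the rapid decay of $K_A$, one immediately sees that it maps $\Hocu^{s,r}$ into $\Hocu^{\infty,r'}$ for any $r'$, hence into every target Sobolev space.

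For the main step, the $L^2$-boundedness of $\Psiocu^{0,0}$, I would localize by a partition of unity. Away from $\pa M$ the algebra collapses to the standard H\"ormander calculus on an open subset of the interior, where $L^2$-boundedness of order-zero operators is classical. In a collar of $\pa M$, apply the diffeomorphism $z_n=x^{-2}$, $z_j=y_j/x$ ($j=1,\ldots,n-1$) onto $\{z_n>1,\ |z_j|<Cz_n^{1/2}\}\subset\RR^n$; under this map the 1-cusp density pulls back (up to the constant $2$) from Lebesgue measure $|dz|$, so $L^2$ against the 1-cusp density corresponds to ordinary $L^2(\RR^n)$. The presentation \eqref{eq:SK-osc-int-Rn} together with the symbol estimate \eqref{eq:Rn-symbol-estimates} for $(m,\ell/2)=(0,0)$ then identifies the operator with an element of $\Psipara^{0,0}$, which is bounded on $L^2(\RR^n)$ by the Calder\'on--Vaillancourt theorem adapted to the parabolic class.

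To conclude, the weight correspondence $x^r=z_n^{-r/2}\sim\langle z\rangle^{-r/2}$ on the relevant region identifies $\Hocu^{s,r}$ with the standard weighted space $H^{s,r/2}(\RR^n)$, so that the general mapping $\Psipara^{m,\ell/2}:H^{s,r/2}\to H^{s-m,r/2-\ell/2}$ transfers back to $A:\Hocu^{s,r}\to\Hocu^{s-m,r-\ell}$. The main obstacle here is not a single conceptual step but the bookkeeping: tracking the factor of $2$ between the order $\ell$ on the manifold side and $\ell/2$ on the $\RR^n$ side through both the symbol estimates and the Sobolev weights, and choosing the partition of unity so that the $\Psipara$ symbol estimates hold uniformly on overlapping pieces. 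Once these are in place, the result follows at once from the already-cited facts about $\Psipara$, as the paper itself notes.
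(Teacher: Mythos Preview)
Your proposal is correct and follows essentially the same approach as the paper: the paper does not give a separate proof but simply states that the result is immediate from the identification of the 1-cusp vector fields and pseudodifferential algebra with their $\RR^n$ counterparts via \eqref{eq:vfs-ocu-Rn} and \eqref{eq:SK-osc-int-Rn}, under which $\Hocu^{s,r}$ corresponds to the standard weighted Sobolev space $H^{s,r/2}$. Your write-up spells out the details behind this sentence (localization, the $L^2$-boundedness of $\Psiocu^{0,0}$ via Calder\'on--Vaillancourt in the parabolic class, and the reduction to order $(0,0)$ using Propositions~\ref{prop:1c-comp} and \ref{prop:1c-param}), which is exactly what the paper leaves implicit.
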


We comment on a different way of analyzing
the 1-cusp algebra by relating it to the cusp algebra at a symbolic level. This
relationship is exactly the same as that of the scattering (which is
1-b from this perspective) and b-algebras, as explained in the second
microlocal discussion of \cite[Section~5]{Vasy:Zero-energy}, thus we
will be brief.

Concretely,
the Schwartz kernels of the cusp operators are, near the diagonal,
given by oscillatory integrals of the form
$$
(2\pi)^{-n}\int e^{i(X\xicu+(y-y')\etacu)}a^{\cul}(x,y,\xicu,\etacu)\,d\xicu\,d\etacu,
$$
relative to the density $\frac{|dx'\,dy'|}{(x')^2}$,
while those of the 1-cusp ones have the form
$$
(2\pi)^{-n}\int e^{i(V\xiocu+Y\etaocu)}a^{\ocul}(x,y,\xiocu,\etaocu)\,d\xiocu\,d\etaocu,
$$
relative to the density $\frac{|dx'\,dy'|}{(x')^{n+2}}$. These are the
same, however, if we write $\xiocu=x\xicu$, $\etaocu=x\etacu$, up to
an overall factor of $(x/x')^n$ (which is irrelevant for the class,
and is identically 1 on the cusp front face). This simply corresponds
to the natural coordinates on the cotangent bundles:
$$
\xicu\,\frac{dx}{x^2}+\etacu\cdot dy=\xiocu\, \frac{dx}{x^3}+\etaocu\cdot \frac{dy}{x}.
$$
This means that geometrically one can (almost, as we explain) obtain the 1-cusp symbol space
from the cusp one by blowing up the corner of the fiber-compactified
cusp cotangent bundle; see Figure~\ref{fig:cusp-2micro}. Indeed, coordinates there, where say $|\xicu|$ is
large relative to $|\etacu|$, are
$$
x,y,|\xicu|^{-1},\frac{\etacu}{|\xicu|},
$$
the corner is $x=0, |\xicu|^{-1}=0$, so in the region where $x$ is
relatively large (relative to the other defining function,
$|\xicu|^{-1}=0$, of the submanifold being blown up), on the blown up space the coordinates become
$$
x,y, |\xicu|^{-1}/x,\frac{\etacu}{|\xicu|},
$$
which are exactly
$$
x,y,|\xiocu|^{-1}, \frac{\etaocu}{|\xiocu|},
$$
i.e.\ coordinates near the corner of the fiber-compactified 1-cusp
cotangent bundle. The `almost' in the identification refers to  the
region where $|\xicu|^{-1}$ is relatively large, where valid
coordinates are
$$
x/|\xicu|^{-1},y, |\xicu|^{-1},\frac{\etacu}{|\xicu|},
$$
which are
$$
|\xiocu|,y, x|\xiocu|^{-1},\frac{\etaocu}{|\xiocu|}.
$$
But these are valid coordinates near the corner if one blows up the
boundary of the zero section of the 1-cusp cotangent bundle. All
remaining regions are handled similarly, cf.\
\cite[Section~5]{Vasy:Zero-energy}.

\begin{figure}[ht]
\begin{center}
\includegraphics[width=40mm]{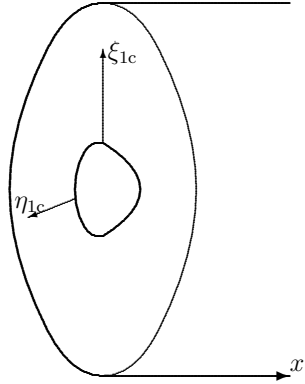}
\end{center}
\caption{The resolution of the corner in the compactified cusp
  cotangent bundle, i.e.\ the symbol space. The curved
  boundary hypersurface in the center at $x=0$ is the original
  boundary fiber of the cusp cotangent bundle. With the corner blown
  up, one obtains a new front face which is naturally diffeomorphic to
the fiber of the 1-cusp cotangent bundle blown up at its zero section.}
\label{fig:cusp-2micro}
\end{figure}

In particular, 1-cusp pseudodifferential symbols can be considered as
cusp ones well-behaved on the corner-blown up cusp space. But blowing
up the corner of a manifold with corners does not change the space of
conormal functions. Thus, much as in the scattering-b relation of
\cite[Section~5]{Vasy:Zero-energy}, we can consider the 1-cusp
operator symbols as conormal (non-classical) cusp operators symbols,
and simply apply the results from the cusp algebra. Concretely,
$$
\Psiocu^{m,\ell}\subset \Psicu^{m,\max(\ell,\ell-m)}\cap\Psicu^{\max(m,0),\ell},
$$
which in the `base case' of $\Psiocu^{0,0}$ is simply the statement
$\Psiocu^{0,0}\subset\Psicu^{0,0}$.

In fact, it is better yet to consider the 3-ordered second microlocal
algebra with 3 orders from the resolved cusp cotangent bundle perspective:
lifted fiber infinity, the lifted boundary and the front face. The
resulting space is then a tri-filtered *-algebra by the cusp results,
and $\Psiocu^{m,\ell}\subset\Psiocucu^{m,\ell,\ell}$.

Now, in order to deal with $\Psiocu^{m,\ell}$ itself in this manner
without the previous discussion, we need to `blow
down' the cusp face in the 2-microlocalized cusp cotangent bundle,
i.e.\ show that the algebraic properties descend to the `blown down
space', namely the 1-cusp cotangent bundle, so that the composition of
elements of $\Psiocu^{m,\ell}$ and $\Psiocu^{m',\ell'}$ is not merely
in the 2-microlocal algebra, but in $\Psiocu^{m+m',\ell+\ell'}$
itself. But modulo $\Psiocu^{-\infty,\ell}$, we can write elements of
$\Psiocu^{m,\ell}$ as quantizations of symbols supported in
$|(\xiocu,\etaocu)|\geq 1$, and thus in the 2-microlocal space away
from the cusp face. The cusp composition results imply that the
composition of any two
so-microlocalized elements of the cusp algebra in fact results in a
similarly microlocalized element thus yielding an element of
$\Psiocu^{m+m',\ell+\ell'}$, and one then only needs to prove
composition results for smoothing operators, from which we refrain here.

\subsection{The semiclassical version of the algebra}\label{sec:semicl-alg}
In order to make the errors in the elliptic parametrix construction
not just compact but small we also need a semiclassical version of the
algebra, and indeed a semiclassical foliation algebra. In the standard
and scattering settings the latter was introduced in
\cite{Vasy:Semiclassical-X-ray} for the same reason. Here the
foliation is given by level sets of the boundary defining function
$x$, so now we regard $x$ as fixed (not just up to adding $O(x^2)$ terms).

In the $\RR^n$ version, the semiclassical foliation quantization takes
the form
\begin{equation}\label{eq:SK-osc-int-Rn-hh}
 {(2\pi)^{-n}}h^{-n/2-1/2}\int e^{i (\zeta_n (z_n-z_n')/h
    + \sum_{j=1}^{n-1}\zeta'_j(z_j-z'_j)/h^{1/2})}\tilde a(z,\zeta)\, d\zeta,
\end{equation}
with symbols still satisfying
\begin{equation}\label{eq:Rn-symbol-estimates-hh}
|\pa_z^\alpha\pa_\zeta^\beta \tilde a_h(z,\zeta)|\leq
C_{\alpha\beta}\langle\zeta\rangle^{m-|\beta|}\langle z\rangle^{\ell/2-|\alpha|/2-\alpha_n/2}.
\end{equation}
This could be regarded as a standard semiclassical quantization, i.e.\
where both powers of $h$ in the exponent are $1$ and the overall
pre-factor is $h^{-n}$, with a worse behaved symbol, but the present version gives more precise
algebraic properties. This is transferred over to the manifold $M$ as
in the non-semiclassical setting, ensuring that in $M^2$ away from the
diagonal the Schwartz kernel not only vanishes to infinite order at
the boundary, like in the 1-cusp case, but also to infinite order in
$h$. With this definition the standard composition results on $\RR^n$
yield the following results.

\begin{prp}
If $A \in \Psiocuhh^{m, \ell}$ with principal symbol, modulo
$h^{1/2}\Psiocuhh^{m-1,\ell-1}$, $[a]\in S^{m,\ell}/h^{1/2} S^{m-1,\ell-1}$ and $B\in \Psiocuhh^{m',\ell'}$ with principal symbol $[b]$ then $A \circ B \in \Psiocuhh^{m + m', \ell + \ell'}$  with principal symbol $[a][b]=[ab]$. 
\end{prp}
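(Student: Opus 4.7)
The strategy is to mirror the proof of Proposition~\ref{prop:1c-comp} by reducing the statement to composition in a standard semiclassical pseudodifferential algebra on $\RR^n$, and then reading off the principal symbol bookkeeping through the parabolic-to-1-cusp dictionary.

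First, I would work locally, identifying a coordinate patch near $\pa M$ with an open set in $\RR^n$ via $x=z_n^{-1/2}$, $y_j=z_j/z_n^{1/2}$, as in Section~\ref{sec:algebra}. Under this identification the semiclassical quantization \eqref{eq:1c-quantize} becomes the $\RR^n$-level quantization \eqref{eq:SK-osc-int-Rn-hh} with symbol in the parabolic class $S^{m,\ell/2}_\para$ characterized by \eqref{eq:Rn-symbol-estimates-hh}. The anisotropic phase $\zeta_n(z_n-z'_n)/h+\zeta'\cdot(z'-z)/h^{1/2}$ becomes a \emph{standard} (isotropic) semiclassical phase after the rescaling $\tilde\zeta_n=\zeta_n/h$, $\tilde\zeta'_j=\zeta'_j/h^{1/2}$, at the cost of replacing the symbol by $\tilde a_h(z,\tilde\zeta):=a(z,h\tilde\zeta_n,h^{1/2}\tilde\zeta')$. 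Away from the diagonal, one requires the kernel to vanish to infinite order in $h$, so off-diagonal contributions are absorbed into $h^\infty\Psiocuhh^{-\infty,-\infty}$ exactly as in the non-semiclassical case.

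Next, I would apply the usual asymptotic composition formula for the standard semiclassical algebra,
\[
(\tilde a_h\sharp \tilde b_h)(z,\tilde\zeta)\sim\sum_{\alpha\in\NN^n}\frac{(-i)^{|\alpha|}}{\alpha!}(\pa_{\tilde\zeta}^\alpha \tilde a_h)(\pa_z^\alpha \tilde b_h),
\]
and unscale back to $(\zeta_n,\zeta')$. Each $\pa_{\tilde\zeta_n}$ produces a factor of $h$, while each $\pa_{\tilde\zeta'_j}$ produces $h^{1/2}$. Combined with the fact that, in the parabolic class, $\pa_{z_j}$ decreases the $\langle z\rangle$-weight by $1/2$ for $j<n$ and by $1$ for $j=n$, the $\alpha=0$ term reproduces the pointwise product $ab$ modulo lower order, while the leading correction comes from $\alpha=e_j$ with $j<n$ and lies in $h^{1/2}S^{m+m'-1,(\ell+\ell'-1)/2}_\para$. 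Contributions with $\alpha_n\geq 1$ or $|\alpha'|\geq 2$ give strictly more $h$-decay and extra weight decay, hence also fall inside this class. Translating back through the identification \eqref{eq:Rn-symbol-estimates}, one obtains $a\sharp b-ab\in h^{1/2}S^{m+m'-1,\ell+\ell'-1}$ in the 1-cusp sense, which is the desired principal-symbol statement.

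The main obstacle, and where care is needed, is the bookkeeping that ensures the first correction really lands in $h^{1/2}\Psiocuhh^{m+m'-1,\ell+\ell'-1}$ rather than a larger class. The anisotropic semiclassical scaling (different powers of $h$ on $\zeta_n$ versus $\zeta'$) and the parabolic weight structure (different decay on differentiation in $z_n$ versus $z'$) must match precisely. The worst-case term is the pairing of a $\pa_{\tilde\zeta'}$-derivative with a $\pa_{z'}$-derivative: this pairing contributes $h^{1/2}$, loses one differential order, and loses a combined half-power of $\langle z\rangle$, yielding exactly the stated residue. All other pairings are strictly better, and the symbolic remainder is controlled by a standard Borel-summation and stationary-phase argument in the isotropic semiclassical calculus on $\RR^n$.
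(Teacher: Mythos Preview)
Your approach is correct and essentially identical to the paper's own argument, which consists of the single sentence ``the standard composition results on $\RR^n$ yield the following results'' preceding the proposition. You have simply unpacked that sentence via the parabolic identification of Section~\ref{sec:algebra}, and your bookkeeping of the $h$-powers against the anisotropic weight loss in $S^{m,\ell/2}_\para$ is accurate: the worst surviving term is indeed the pairing $\pa_{\tilde\zeta'_j}\tilde a\cdot\pa_{z_j}\tilde b$, contributing $h^{1/2}$ and losing one differential order and one $\ell$-order. One small terminological slip: after your rescaling $\tilde\zeta_n=\zeta_n/h$, $\tilde\zeta'_j=\zeta'_j/h^{1/2}$, the phase $e^{i\tilde\zeta\cdot(z-z')}$ is the \emph{standard non-semiclassical} phase (no $h$ remains), not an ``isotropic semiclassical'' one; the composition formula you then invoke is correspondingly the classical (Kohn--Nirenberg) expansion with no $h$ in the coefficients, which is exactly what you wrote.
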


Here the gain of $h^{1/2}$ in the symbol algebra corresponds to the
$h^{1/2}$ appearing with the foliation tangent variables, $y$ or
$z_j$, $j=1,\ldots,n-1$.
As usual, this implies that there is a parametrix for elliptic
operators:

\begin{prp}
If $A\in \Psiocuhh^{m,\ell}$ with principal symbol $a$ is
elliptic, i.e.\ for some $c>0$,
\[ |a(x,y,\xiocu,\etaocu)| \ge C \langle \xiocu,\etaocu\rangle ^m x
  ^{-\ell} \text{ for } |(\xiocu,\etaocu)|\gg 1 \text{ or } x \ll 1
  \text{ or } h\ll 1\]
then there is a parametrix $B\in \Psiocuhh^{-m,-\ell}$ with error in $h^\infty\Psiocuhh^{-\infty, -\infty}$.
\end{prp}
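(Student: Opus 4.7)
The plan is the standard elliptic parametrix construction, carried out iteratively in the three orders that are being improved: the differential order $m$, the weight order $\ell$, and the semiclassical order in $h$. The composition result of the preceding proposition provides the symbol calculus that makes the iteration work, and the $\RR^n$-based identification of $\Psiocu^{m,\ell}$ with the parabolic class $\Psipara^{m,\ell/2}$ (lifted to its semiclassical foliation version via \eqref{eq:SK-osc-int-Rn-hh}--\eqref{eq:Rn-symbol-estimates-hh}) ensures that all standard symbol-calculus facts on $\RR^n$ (existence of symbols with prescribed asymptotic expansions, Borel summation, etc.) are available.

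First I would construct a principal parametrix. The ellipticity hypothesis says $|a|\ge c\langle(\xiocu,\etaocu)\rangle^m x^{-\ell}$ holds outside a set that is compact in $(\xiocu,\etaocu)$, bounded away from $\{x=0\}$, and bounded away from $\{h=0\}$. Choose a smooth cutoff $\chi(x,y,\xiocu,\etaocu,h)$ that equals $1$ on this compact set and $0$ where $a$ might vanish, and set
\[
b_0 = \frac{1-\chi}{a}\in S^{-m,-\ell},
\]
regarded as an $h$-independent symbol satisfying \eqref{eq:Rn-symbol-estimates-hh} in the $\RR^n$ picture. Let $B_0\in\Psiocuhh^{-m,-\ell}$ be its quantization. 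By the composition proposition, $AB_0 = I - R_1$ with $R_1\in h^{1/2}\Psiocuhh^{-1,-1}$: the $I$ comes from $ab_0\equiv 1$ modulo the symbol class one order lower, and the $h^{1/2}$ gain is the one appearing in the symbol algebra because each subprincipal term in the composition involves at least one derivative in the foliation tangent direction $y$ or in $(\xiocu,\etaocu)$.

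Next, I would iterate. Standard Neumann-like summation in the symbol algebra produces $b_N\in S^{-m,-\ell}$ with
\[
b_N \sim b_0 + b_0 \# r_1 + b_0 \# r_1\# r_1 + \ldots,
\]
where $r_1$ is the full symbol of $R_1$ and $\#$ is the composition star product, truncated at order $N$. Quantizing gives $B_N\in\Psiocuhh^{-m,-\ell}$ with $AB_N - I \in h^{(N+1)/2}\Psiocuhh^{-N-1,-N-1}$. Asymptotic (Borel) summation, carried out on the $\RR^n$ side in the parabolic class (where it is the standard Borel lemma for symbols satisfying \eqref{eq:Rn-symbol-estimates-hh} with an extra parameter $h\in(0,1)$), produces a single $B\in\Psiocuhh^{-m,-\ell}$ with $AB-I\in h^\infty\Psiocuhh^{-\infty,-\infty}$. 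The same construction on the left yields $B'\in\Psiocuhh^{-m,-\ell}$ with $B'A-I\in h^\infty\Psiocuhh^{-\infty,-\infty}$, and the usual argument $B'(AB-I) = (B'A-I)B$ shows $B-B'\in h^\infty\Psiocuhh^{-\infty,-\infty}$, so $B$ serves as a two-sided parametrix.

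The main technical obstacle is bookkeeping the three simultaneous orders in the asymptotic summation, in particular verifying that each step of the Neumann iteration indeed gains both a half-power of $h$ and one order in each of the differential and weight filtrations, uniformly across the three regimes $|(\xiocu,\etaocu)|\to\infty$, $x\to 0$, $h\to 0$. Once phrased through the parabolic identification of Section~\ref{sec:algebra} (together with its semiclassical foliation version \eqref{eq:Rn-symbol-estimates-hh}), this reduces to the standard semiclassical symbol calculus on $\RR^n$, so the step is routine but is where care is required to ensure the three improvements happen simultaneously and that the Borel sum lies in the right class.
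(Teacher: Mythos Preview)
Your approach is correct and is precisely the standard construction the paper has in mind: the paper does not give an explicit proof but simply writes ``As usual, this implies that there is a parametrix for elliptic operators'' immediately after the composition proposition, and your argument is the routine unpacking of that sentence via the $\RR^n$/parabolic identification. One small slip: your description of the cutoff $\chi$ is self-contradictory (you ask for $\chi=1$ on the compact exceptional set and $\chi=0$ where $a$ might vanish, but these are the same set); you of course mean $\chi\equiv 1$ near the exceptional set and $\chi\equiv 0$ outside a slightly larger neighborhood, so that $(1-\chi)/a$ is well defined.
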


The triviality of the error, reflecting by the $h^\infty$ factor, is
what gives the smallness of the error for $h$ sufficiently small, and
thus the invertibility of $A_h$ in that case.

The positive integer order semiclassical foliation 1-cusp Sobolev
spaces are the same as the 1-cusp spaces, but with respect to an
$h$-dependent norm. They can be defined via regularity
with respect to the corresponding foliation semiclassical 1-cusp vector fields, $V\in\Vocuhh$,
which are of the form
$$
a_0(hx^3\pa_x)+\sum_{j=1}^{n-1}a_j(h^{1/2}x\pa_{y_j}).
$$
Thus, the norm is locally (and in general by a partition of unity) given by
$$
\|u\|_{\Hocuhh^s}^2=\|u\|^2_{L^2}+\sum_{j+|\alpha|\leq s}
\|(hx^3\pa_x)^j (h^{1/2}x\pa_{y_j})^\alpha u\|^2_{L^2};
$$
the
negative integer ones then can be defined via duality. The
weighted norms $\Hocuhh^{s,r}$ are those corresponding to $x^r\Hocuhh^s$. Equivalently, we can say, for
any real $s,r$, with $s,r\geq 0$, say,
$$
\|u\|_{\Hocuhh^{s,r}}^2=\|u\|^2_{L^2}+\|Au\|^2_{L^2},
$$
where $A\in\Psiocuhh^{s,r}$ is elliptic.
This gives

\begin{prp}
If $A\in \Psiocuhh^{m,\ell}$, then $A: \Hocuhh^{s,r} \to \Hocuhh^{s - m, r - \ell}$. 
\end{prp}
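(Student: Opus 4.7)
The plan is to reduce to the $L^2 \to L^2$ boundedness of order $(0,0)$ operators via the algebra and elliptic parametrix results already proved. By the definition of the semiclassical foliation 1-cusp Sobolev norms,
\[
\|v\|_{\Hocuhh^{s-m,r-\ell}} \sim \|v\|_{L^2} + \|Ev\|_{L^2}, \qquad \|u\|_{\Hocuhh^{s,r}} \sim \|u\|_{L^2} + \|Fu\|_{L^2},
\]
for elliptic $E \in \Psiocuhh^{s-m,r-\ell}$ and $F \in \Psiocuhh^{s,r}$, so it suffices to bound $\|Au\|_{L^2}$ and $\|EAu\|_{L^2}$ by $\|u\|_{\Hocuhh^{s,r}}$.

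First I would produce a parametrix $G \in \Psiocuhh^{-s,-r}$ for $F$ by the elliptic theory, so $GF = \Id + R$ with $R \in h^\infty \Psiocuhh^{-\infty,-\infty}$. Writing $u = GFu - Ru$ and applying $EA$ (or $A$ alone) gives
\[
EA u = (EAG) Fu - (EAR) u.
\]
By the composition proposition, $EAG \in \Psiocuhh^{0,0}$ and $EAR \in h^\infty \Psiocuhh^{-\infty,-\infty}$; similarly $AG \in \Psiocuhh^{m-s,\ell-r}$ but after also composing with $E$ we obtain order $(0,0)$. Hence the desired estimate follows once we know that every $B \in \Psiocuhh^{0,0}$ is bounded on $L^2$ (uniformly in $h$) and that the residual $h^\infty \Psiocuhh^{-\infty,-\infty}$ class is bounded on $L^2$ with norm $O(h^\infty)$.

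The residual bound is immediate from the Schwartz-kernel description, exactly as in the argument given in the paper showing that $A \in \Psiocu^{-\infty,\ell}$ raises conormal order by $\ell$: the rapid decay of $K_A$ in $(V,Y)$ and the change of variables with Jacobian $(x')^{-(n+2)}$ yield Schur-type bounds with the additional $h^\infty$ factor. The main content is therefore the $L^2$ boundedness of $\Psiocuhh^{0,0}$. For this I would use the local identification, established in Section~2, of $\Psiocu^{0,0}$ with the parabolic class $\Psipara^{0,0}$ on $\RR^n$ via $z_n = x^{-2}$, $z_j = y_j/x$, under which $\Hocu^{0,0}$ corresponds to $L^2(\RR^n,|dz|)$; the semiclassical foliation version lifts to a standard (two-scale) semiclassical parabolic quantization on $\RR^n$, whose $L^2$ boundedness is the usual Calder\'on--Vaillancourt estimate applied uniformly in $h$, or alternatively a $T^*T$/Cotlar--Stein argument on the Schwartz kernel with respect to the density $\tfrac{|dx\,dy|}{x^{n+2}}$.

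The main obstacle is this last step, the uniform $L^2 \to L^2$ bound for $\Psiocuhh^{0,0}$: one must verify that the two different semiclassical scales $h$ (paired with $x^3 \pa_x$) and $h^{1/2}$ (paired with $x \pa_{y_j}$) appearing in the quantization \eqref{eq:1c-quantize} are compatible with the parabolic symbol estimates \eqref{eq:Rn-symbol-estimates-hh} after the identification, so that the resulting $\RR^n$ symbol still lies in a semiclassical H\"ormander class where Calder\'on--Vaillancourt applies with constants independent of $h$. Once this is checked, the result for arbitrary $(s,r)$ and arbitrary $(m,\ell)$ follows by the algebra/parametrix reduction above.
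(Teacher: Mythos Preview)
Your approach is correct and is essentially the standard argument the paper has in mind: the paper does not give a detailed proof of this proposition but simply says ``This gives'' after recalling the $\RR^n$ identification and the norm definition, so you are filling in precisely the reduction (via the composition/parametrix propositions) to uniform $L^2$-boundedness of $\Psiocuhh^{0,0}$, which in turn follows from the local identification with the semiclassical parabolic quantization on $\RR^n$ and Calder\'on--Vaillancourt. One small caveat: the norm description $\|u\|^2\sim\|u\|_{L^2}^2+\|Eu\|_{L^2}^2$ with $E$ elliptic is stated in the paper only for $s,r\geq 0$, so for general target orders $s-m,r-\ell$ you should either invoke duality or, more cleanly, use an elliptic invertible $\Lambda_{s,r}\in\Psiocuhh^{s,r}$ as an isomorphism $\Hocuhh^{s,r}\to L^2$ and reduce $A$ to $\Lambda_{s-m,r-\ell}\,A\,\Lambda_{s,r}^{-1}\in\Psiocuhh^{0,0}$ directly.
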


\section{Invertibility of the X-ray transform}\label{sec:normal-op}
Our main technical result concerns a modified normal operator for the X-ray
transform, namely
$$
A=e^{-\Phi/h}L\tilde\chi Ie^{\Phi/h},
$$
where
$$
(Lw)(z)=\int_{S_z M} w(z,v) |d\sigma_z(v)|,
$$
$\tilde\chi=\tilde\chi(z,\lambda/(h^{1/2} x),\omega)$,
$\Phi=\frac{-1}{2x^2}$ is a Gaussian weight, and $\sigma$ is a smooth
positive measure on $S_z M$, i.e.\ in $(\lambda,\omega)$. Here we take a
different normalization of $L$ than
\cite{Zachos:Thesis}, which introduces an additional
$x^{-1}$ factor in $L$, making the decay order $0$ (rather than $-1$) for $A$ in the
following theorem.

\begin{thm}\label{ispsdo}
The modified normal operator $A$ of $I$ is an operator in
$h\Psiocuhh^{-1, -1}$. Furthermore, for a suitable choice of $\tilde\chi$, its principal symbol is
elliptic in a collar neighborhood of $\pa M$ both in the sense of the
1-cusp algebra and semiclassically.
\end{thm}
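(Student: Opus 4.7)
The plan is to compute the Schwartz kernel of $A$ in local coordinates near $\pa M$ and recognize it as an element of $h\Psiocuhh^{-1,-1}$ by matching the oscillatory integral representation \eqref{eq:1c-quantize}. In a collar neighborhood write $z=(x,y)$, decompose tangent vectors as $v=\lambda(x\pa_x)+\omega\cdot\pa_y$, and use $(\lambda,\omega,s)$ as parameters on the integration in
\[
(L\tilde\chi If)(z)=\int_{S_zM}\tilde\chi(z,\lambda/(h^{1/2}x),\omega)\int f(\exp_z(sv))\,ds\,d\sigma_z(v).
\]
The localizer forces $\lambda=h^{1/2}x\tilde\lambda$ with $\tilde\lambda$ bounded, i.e.\ geodesics are almost tangent to level sets of $x$. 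A rescaling of the arclength parameter $s$ (by a factor of order $x$) is natural, so $s=t/x$ or similar, giving an $O(1)$ conic-geometry parameter.

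Next, change variables from $(\lambda,\omega,s)$ to the endpoint $z'=(x',y')=\exp_z(sv)$; the no-conjugate-points-within-$\pi/2$ hypothesis on the link guarantees that the exponential map is a diffeomorphism on the relevant range, so this is a smooth substitution away from the radial point where $\tilde\chi$ vanishes. Using the conic bicharacteristic formulas \eqref{eq:conic-bichar}, one reads off that $y-y'$ is of order $x$ and $x-x'$ of order $x^3$ in the supported regime, so the natural variables are precisely $V=(x-x')/x^3$ and $Y=(y-y')/x$, the 1-cusp diagonal coordinates. The resulting Schwartz kernel $K(z,z')$ is then conormal at the lifted diagonal and the 1-cusp front face, with an overall factor of $h^{1/2}$ (from $d\lambda$) and an $h^{1/2}/x$ factor (from $ds$ after rescaling), producing the $h^{+1}$ prefactor and $x^{+1}$ decay claimed.

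Now conjugate by $e^{\Phi/h}$: the kernel is multiplied by
\[
\exp\!\Big(\frac{\Phi(x')-\Phi(x)}{h}\Big)=\exp\!\Big(\frac{1}{h}\Big(\frac{1}{2x^2}-\frac{1}{2(x')^2}\Big)\Big),
\]
and one Taylor expands $\Phi(x')-\Phi(x)$ in $V$: the linear term gives $-V/(hx^{-1})$-type behavior, i.e.\ purely imaginary under $i\xiocu V$ after dualization, while the quadratic and higher terms contribute to the amplitude and remain bounded because the support of $\tilde\chi$ and the change-of-variables Jacobian keep $V,Y$ confined to an appropriate neighborhood of the origin with Gaussian weight. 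Fourier-transforming in the fiber variables of the tangent sphere and using the substitution $\tilde\lambda\leftrightarrow\xiocu$, $\omega\leftrightarrow\etaocu$, the dual variables $(\xiocu,\etaocu)$ appear paired with $(V,Y)$, yielding precisely the oscillatory integral form \eqref{eq:1c-quantize} with an amplitude in $S^{-1,-1}$ and overall $h^{+1}$ factor. This identifies $A\in h\Psiocuhh^{-1,-1}$.

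Finally, compute the principal symbol by performing the $\omega$ integration (and any residual stationary phase in the now-fast variable): to leading order the integrand is a rapidly decaying function of the tangent-fiber variables against the $\tilde\chi$ cutoff, and the sphere measure gives a positive, elliptic expression modeled on the Euclidean computation done in Uhlmann--Vasy and Vasy's semiclassical papers. A suitable choice of $\tilde\chi$ (smooth, nonnegative, even in $\tilde\lambda$, supported in a neighborhood of $\tilde\lambda=0$) keeps this positive away from zero uniformly in $x$ and $h$ small, giving ellipticity in both the 1-cusp and semiclassical senses. The main obstacle is the second step: making the change of variables $(\lambda,\omega,s)\to(V,Y)$ genuinely give a $\CI$ (not just conormal) parametrization of the diagonal up to the 1-cusp front face, with a non-vanishing Jacobian uniformly down to $x=0$ and $h=0$. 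This is where the conic-geometry structure \eqref{eq:conic-bichar} of the bicharacteristics and the no-conjugate-points-within-$\pi/2$ hypothesis on the link are used crucially, since the Jacobian along tangent-like geodesics is controlled by the Jacobi fields on $Y$, which are non-degenerate precisely under this condition.
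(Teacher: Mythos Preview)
Your overall strategy---change variables from $(\lambda,\omega,s)$ to the endpoint $z'$ and read off conormality on the 1-cusp double space---is a legitimate alternative to the paper's symbol-side approach, but several steps as written are either wrong or too vague to carry the argument.

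First, the treatment of the exponential weight is incorrect. The factor $\exp\big((\Phi(x')-\Phi(x))/h\big)$ is \emph{real}, and after the rescaling $\hat t=t/(h^{1/2}x)$, $\hat\lambda=\lambda/(h^{1/2}x)$ it becomes (see \eqref{eq:semicl-ocu-exp-damping}) $e^{\hat\lambda\hat t+\alpha\hat t^2+O(h^{1/2}x)}$ with $\alpha<0$. This is Gaussian damping in $\hat t$, part of the amplitude; it is never ``purely imaginary under $i\xiocu V$ after dualization.'' Without this Gaussian you have no control over the non-compact $\hat t$-integral and hence no symbol estimates.

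Second, the line ``Fourier-transforming in the fiber variables of the tangent sphere and using the substitution $\tilde\lambda\leftrightarrow\xiocu$, $\omega\leftrightarrow\etaocu$'' is not a valid step. The dual variables $(\xiocu,\etaocu)$ arise from Fourier-transforming the kernel in $(V,Y)$, not in $(\tilde\lambda,\omega)$. What actually happens (compare \eqref{eq:semicl-ocu-full-symbol}--\eqref{eq:semicl-ocu-phase}) is that after writing $\delta(z'-\gamma(t))$ as an inverse Fourier transform, the symbol is an integral over $(\hat t,\hat\lambda,\omega)$ with phase $\xiocu(\hat\lambda\hat t+\alpha\hat t^2)+\etaocu\cdot\omega\hat t$; the symbolic order $-1$ in $(\xiocu,\etaocu)$ then comes from \emph{stationary phase} in $(\hat t,\hat\lambda)$ (or $(\hat t,\omega^\parallel)$), not from any direct identification of $\tilde\lambda$ with $\xiocu$.

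Third, you do not address the contribution from $t$ bounded away from $0$ and from $t\to\pm\infty$. Your change of variables $(\lambda,\omega,s)\to z'$ for bounded $s$ only gives the near-diagonal behavior. The geodesics here have infinite arclength, and the kernel has contributions from the full curve; one needs the reparameterization $t\to r$ of Section~\ref{sec:geodesic-structure} together with the exponential damping \eqref{eq:exp-weight-radial} and the no-conjugate-points hypothesis in its uniform-to-$x=0$ form to show these pieces are residual. The $\pi/2$ condition enters there, not merely in the near-diagonal Jacobian.

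Finally, the ellipticity claim needs an actual computation. The paper evaluates the semiclassical principal symbol at finite $(\xiocu,\etaocu)$ by taking $\tilde\chi$ to be (an approximation of) a specific Gaussian $e^{\hat\lambda^2/(2\alpha)}$ and computing two successive Gaussian integrals explicitly, obtaining a manifestly positive expression; a generic nonnegative even $\tilde\chi$ is not sufficient without this.
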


The proof will take up
Sections~\ref{sec:geodesic-structure}-\ref{sec:X-ray-inv}. Then in
Section~\ref{sec:ispsdo-conseq} we derive some immediate consequences,
and then in Section~\ref{sec:combined-sc-1c} we introduce the
artificial boundary method in this context to prove Theorem~\ref{thm:main-conic-ops}.

\subsection{Structure of the geodesics}\label{sec:geodesic-structure}
In order to get started, first we need to describe the geodesics in
some detail.
They satisfy
Hamilton's equation of motion, i.e.\ have tangent vector given by the
Hamilton vector field $H_g$ of the dual metric function. The dual
metric function is a symbol on $\Tsc^*M$, of order
$(2,0)$, which is elliptic away from the 0-section. Thus, $H_g$ is of
the form $x$ times a b-vector field; this structure
is generally
the case of symbols of order $(2,0)$ on $\Tsc^*M$, as follows from
\cite{RBMSpec}. Indeed, as we already mentioned, explicitly
$$
\frac{1}{2}H_g=x\Big(\tau(x\pa_x+\mu\cdot\pa_\mu)-|\mu|^2\pa_\tau+\frac{1}{2}H_h+xV\Big),
$$
where $V\in\Vb(\Tsc^*M)$. Here and below, for convenience, we use a product
decomposition of a neighborhood of $\pa M$ respecting the preferred
boundary defining function $x$; the
concrete choice is irrelevant. Also, in this geometric discussion we
write covectors as
$$
\tau\frac{dx}{x^2}+\mu\cdot\frac{dy}{x}.
$$
Correspondingly,
it is useful to consider integral curves $\tilde\gamma$ of $\scH_g=x^{-1}H_g$. If the actual
bicharacteristics are $c$, then the relationship is via the
reparameterization of the integral curves via $\frac{dt}{ds}=x(c(s))$,
i.e.\ $\frac{ds}{dt}=x(\tilde\gamma(t))^{-1}$; thus the X-ray
transform can be rewritten in terms of $\tilde\gamma$, or its
projection $\gamma$ to the base manifold, as
\begin{equation}\label{eq:sc-reparameterized-Xray}
If(\gamma)=\int f(\gamma(t))\,x(\gamma(t))^{-1}\,dt,
\end{equation}
i.e.\ as a weighted X-ray transform. Later on we shall make a further
change of parameterization to deal with the Schwartz kernel of the
operator as $t\to\pm\infty$ (and thus $x(\gamma(t))\to 0$).

Along the integral curves $\tilde\gamma=\tilde\gamma(t)$ of
$\frac{1}{2}\scH_g$, $\frac{dx}{dt}=\tau x+x^2 f_1$, $f_1$ smooth, and
hence
$$
\frac{d^2 x}{dt^2}=(-|\mu|^2+\tau^2) x+x^2f_2,
$$
with $f_2$ smooth, so along the unit level set of the dual metric function,
\begin{equation}\label{eq:Hessian-x}
\frac{d^2 x}{dt^2}=(-2|\mu|^2+1) x+x^2f_2,
\end{equation}
which is negative, with an upper bound given by $x$ times a negative constant, where $|\mu|>3/4$
say, provided $x$ is sufficiently small. In particular, if $\frac{dx}{dt}=0$, so $\tau=O(x)$, $\frac{d^2
  x}{dt^2}<0$ showing the concavity of the level sets of $x$ from the
sublevel sets. In general we will work in a neighborhood $x<x_0$ of
infinity in which this concavity statement holds.

In fact, one can have even stronger convexity by working with
$x^{-2}$ in place of $x$ as $\frac{dx^{-2}}{dt}=-2x^{-2}(\tau+xf_1)$,
hence
$$
\frac{d^2 x^{-2}}{dt^2}=2x^{-2}(2\tau^2+|\mu|^2+xf_2),
$$
which is positive, with an $x^{-2}$ times a positive lower bound on the characteristic
set for $x$ small.

Now, $\scH_g$ being a b-vector field, it is a linear combination of
$x\pa_x$, $\pa_y$, $\pa_{\tau}$ and $\pa_{\mu}$.
In view of this, it is useful to write the tangent vector to the projected bicharacteristic $\gamma$, which is thus the
pushforward of $\scH_g$ to the base manifold, as a b-tangent vector,
$$
\lambda x\pa_x+\omega\pa_y.
$$
Notice that in fact $\lambda$ is independent of the choice of the
product decomposition respecting the preferred boundary defining
function $x$, i.e.\ if $(x',y')$ are other coordinates with $x'=x$,
then
$$
\lambda x\pa_x+\omega\pa_y=\lambda (x'\pa_{x'})+\omega'\pa_{y'}.
$$
Now writing the $x$, resp.\ $y$, component of $\gamma$ as
$\gamma^{(1)}$, resp.\ $\gamma^{(2)}$, as before, by the smoothness of
the flow and as it is tangent to $x=0$, so it preserves $x=0$, we have
$$
\gamma^{(1)}_{x,y,\lambda,\omega}(t)=x\tilde\Gamma^{(1)}(x,y,\lambda,\omega,t)
$$
with $\tilde\Gamma^{(1)}$ smooth as
$$
\gamma^{(1)}_{x,y,\lambda,\omega}(t)=x\int_0^1\pa_1
\gamma^{(1)}_{\sigma x,y,\lambda,\omega}(t)\,d\sigma,
$$
with $\pa_1$ denoting derivative in the first subscript slot,
and
$$
\tilde\Gamma^{(1)}(x,y,\lambda,\omega,0)=1,\ \tilde\Gamma^{(1)}(0,y,\lambda,\omega,t)=\pa_1 \gamma^{(1)}_{0,y,\lambda,\omega}(t).
$$
Now, $\gamma$ having tangent vector
$\lambda x\pa_x+\omega\pa_y$ means that
$\frac{d}{dt}\tilde\Gamma^{(1)}(x,y,\lambda,\omega,0)=\lambda$. Taylor
expanding $\tilde\Gamma^{(1)}$ further in $t$,
we have
\begin{equation}\label{eq:x-form-sc-flow}
\gamma^{(1)}_{x,y,\lambda,\omega}(t)=x(1+\lambda t+\alpha(x,y,\lambda,\omega)
t^2+t^3\Gamma^{(1)}(x,y,\lambda,\omega,t)),
\end{equation}
with $\Gamma^{(1)}$ smooth. Now, the sublevel sets $\{x<x_0\}$ of $x$ (neighborhoods
of infinity) are assumed to be geodesically concave, so
bicharacteristics tangent to the level set $x=x_0$, i.e.\ with
$\lambda=0$, satisfy
$\frac{d^2}{dt^2}\gamma^{(1)}_{x,y,0,\omega}(t)<0$, so
$\alpha<0$ when $\lambda=0$, cf.\ the discussion after
\eqref{eq:Hessian-x} for seeing that for $x$ sufficiently small,
$\frac{d^2}{dt^2}\gamma^{(1)}_{x,y,0,\omega}(t)$ is bounded from above
by $x$ times a negative constant.
Since the metric is Riemannian, it gives rise to an identification
between $\Tsc^*M$ and $\Tsc M$; the pushforward of $H_g$ to the base
manifold is simply the sc-covector at which we are doing the pushforward so identified with a sc-tangent
vector, and thus the pushforward of $\scH_g=x^{-1}H_g$ to the base is the
sc-covector identified as a b-covector via division by $x$. In particular, for a warped
product-type sc-metric we have $g=\tau^2+H(y,\mu)$, and then the
pushforward of $\scH_g$ from $(x,y,\tau,\mu)$ is
$\tau(x\pa_x)+H(y)(\mu,.)$, i.e.\ $\lambda=\tau$, and $\omega$ is
the standard identification of $\mu$ with a tangent vector on the
cross section $\pa X$.

Notice that \eqref{eq:x-form-sc-flow} implies that
there are $T_0>0,C>0$ such that for $|t|<T_0$,
$$
\gamma^{(1)}_{x,y,\lambda,\omega}(t)\leq x(1+\lambda t-Ct^2)=x\Big(1-C\Big(t-\frac{\lambda}{2C}\Big)^2+\frac{\lambda^2}{4C}\Big).
$$
So if $|\lambda|\leq C_0 \ep$, then
$\gamma^{(1)}_{x,y,\lambda,\omega}(t)\leq x(1+C_1 \ep^2)$, hence
$$
\gamma^{(1)}_{x,y,\lambda,\omega}(t)^{-2}\geq x^{-2}(1-C_2 \ep^2),
$$
hence
$$
x^{-2}-\gamma^{(1)}_{x,y,\lambda,\omega}(t)^{-2}\leq C_2x^{-2}\ep^2.
$$
In particular, if $|\lambda|<C_3 x$, then this quantity is bounded
above by a constant. The weight we use is the exponential of this
(once in the conjugation), and
thus it is bounded. Indeed, this relationship between the dynamics and
the weight motivates the choice of the latter: any larger weight would
mean the operator is not in our pseudodifferential algebra and
any smaller weight would mean that it is irrelevant, and we would not
have an elliptic operator. In particular, this explains that if we
instead had $|\lambda|<C_3x^p$, our weight would be of the form
$e^{-1/x^{2p}}$, and similar results would apply.

A similar argument also implies that for $|t|>C_4|\lambda|$, but
$|t|<T_0$, we have
\begin{equation}\label{eq:exponent-Gaussian-bound}
x^{-2}-\gamma^{(1)}_{x,y,\lambda,\omega}(t)^{-2}\leq -C_5x^{-2}t^2,
\end{equation}
so in particular if $t$ is bounded away from $0$ this is bounded from
above by a negative multiple of $x^{-2}$.

Due to the convexity of the level sets of $x$,
$\gamma^{(1)}_{x,y,\lambda,\omega}(t)$ can have only one local
maximum, which is necessarily near $t=0$, within $C_4|\lambda|$ of it,
so if $|\lambda|<C_3 x$ then within $C_4'x$ of it. Correspondingly, in
fact for $t$ bounded \eqref{eq:exponent-Gaussian-bound} automatically
holds in $|t|>C_4|\lambda|$ (even for $t$ with not necessarily $|t|<T_0$).

We will also need a no conjugate points requirement. We will say that
the metric satisfies the no-conjugate points assumptions if for all
$(x,y)$ with $x<x_0$ the smooth map
$$
(t,\lambda,\omega)\mapsto (x^{-1}\gamma^{(1)}_{x,y,\lambda,\omega}(t),
\gamma^{(2)}_{x,y,\lambda,\omega}(t))=
(\tilde\Gamma^{(1)}_{x,y,\lambda,\omega}(t), \gamma^{(2)}_{x,y,\lambda,\omega}(t))
$$
has a full rank differential. For $x\neq 0$, this is directly
equivalent to the usual statement (since the factor $x^{-1}$ is
irrelevant), but this is the uniform version we need. Note that taking
into account the smooth dependence of the flow on the parameters,
differentiating
$\frac{d\gamma^{(1)}(t)}{dt}=\gamma^{(1)}(t)(\tau+\gamma^{(1)}(t)f_1)$
with respect to $x$ and evaluating at $x=0$ yields
$$
\frac{d}{dt}\pa_1\gamma^{(1)}(t)=\tau\pa_1\gamma^{(1)}(t),
$$
so $\pa_1\gamma^{(1)}(t)$ satisfies a first order homogeneous linear
ODE which only depends on the asymptotic conic metric
$g_\infty$. Since the non-degeneracy condition for $x$
sufficiently small follows from that for $x=0$, we conclude that it
suffices to check the non-degeneracy condition for $g_\infty$, which
we do below after a further reparameterization.

Thus,
\begin{equation*}\begin{aligned}
\gamma_{x,y,\lambda,\omega}(t)&=(\gamma^{(1)}_{x,y,\lambda,\omega}(t)
, \gamma^{(2)}_{x,y,\lambda,\omega}(t))\\
&=(x+x(\lambda t+\alpha
t^2+t^3\Gamma^{(1)}(x,y,\lambda,\omega,t)),y+\omega t+t^2\Gamma^{(2)}(x,y,\lambda,\omega,t))
\end{aligned}\end{equation*}
with $\Gamma^{(1)},\Gamma^{(2)}$
smooth functions of $x,y,\lambda,\omega,t$.

 In order to obtain
uniformity as $|t|\to\infty$, it is very useful to change the parameterization again,
keeping in mind the global nature of the Hamilton flow. The key point
is that as the dual metric function at $x=0$ is $g|_{x=0}=\tau^2+h(y,\mu)$, as a b-vector field,
$$
\scH_g|_{x=0}=2\tau(x\pa_x+\mu\pa_{\mu})-2|\mu|^2\pa_{\tau}+H_h,
$$
as discussed earlier. Correspondingly, the points
$x=0$, $\mu=0$ are `radial points', where this vector field,
considered as a b-vector field, is a
multiple of $x\pa_x$. It is thus natural to blow these up in
$\Tsc^*M$. Given our choice of cutoff, the integral curves of concern
approach $x=0$, $\mu=0$ almost tangent to the boundary $x=0$; in
this region $\rho=\frac{x}{|\mu|}$, $|\mu|$,
$\widehat\mu=\frac{\mu}{|\mu|}$, together with $\tau,y$, are
coordinates on the blown up space; $|\mu|$ defines the front face,
$\rho$ defines the lift of the original boundary. Then $\scH_g=|\mu|
V$, $V$ a vector field tangent to the lift of the original boundary,
$\rho=0$, but transversal to the front face. Correspondingly, the
integral curves of $V$, $\hat\gamma=\hat\gamma_{x,y,\lambda,\omega}(r)$,
with $\frac{dr}{dt}=|\mu(\gamma(t))|$, intersect the front face in
finite time, which enables standard flow arguments. Concretely, taking into account that $|\mu|$ is close to
$1$ at the initial point, so $\rho$ and $x$ can be used
interchangeably there,
$$
\rho=x F(x,y,\lambda,\omega,r),
$$
with $F$
smooth and positive, and $|\mu|$ also a smooth function of
$x,y,\lambda,\omega,r$, hence their product, $x(\hat\gamma(r))$, is
also such a multiple of $x$, with the multiple going to $0$ at the
front face, and
the same is true for $y(\hat\gamma(r))$. Moreover,
$$
|\mu(\hat\gamma(r))|=x(\hat\gamma(r))/\rho(\hat\gamma(r))=x^{-1}
x(\hat\gamma(r)) F(x,y,\lambda,\omega,r)^{-1}.
$$
Note also that if
$R(x,y,\lambda,\omega)$ is the value of $r$ the front face is reached
by the integral curve, $|\mu(\hat\gamma(r))|\sim |r-R|$ (the right
hand side is bounded
above and below by positive multiples of $|r-R|$), so
$x(\hat\gamma(r))$ is a smooth non-degenerate multiple of $x(R-r)$. The
exponential weight we computed above in \eqref{eq:exponent-Gaussian-bound} is
\begin{equation}\begin{aligned}\label{eq:exp-weight-radial}
    &x^{-2}-\hat\gamma^{(1)}_{x,y,\lambda,\omega}(r)^{-2}\\
    &=(\hat\gamma^{(1)}_{x,y,\lambda,\omega}(r)^2-x^2)x^{-2}(\hat\gamma^{(1)}_{x,y,\lambda,\omega}(r))^{-2}\\
    &=-(\hat\gamma^{(1)}_{x,y,\lambda,\omega}(r))^{-2}W(x,y,\lambda,\omega,r),
  \end{aligned}\end{equation}
with $W\to 1$ as $r\to R$.

We also need a no-conjugate point
assumption which analogously to the finite $t$ case means the non-degeneracy of the smooth function
$(x^{-1}\hat\gamma^{(1)},\hat\gamma^{(2)})$ of $(r,\lambda,\omega)$;
here
in fact polynomial in $\hat\gamma^{(1)}$ degeneracy of the
derivative is
acceptable due to the exponential decay of the weight. Notice that for
$r$ away from the endpoints of the interval, this is equivalent to
$(x^{-1}\gamma^{(1)},\gamma^{(2)})$ being non-degenerate as a
function of $(t,\lambda,\omega)$ as
$\frac{dr}{dt}=|\mu(\gamma(t))|\neq 0$ there, i.e.\ it reduces to
the previous discussion. As discussed above for bounded $t$, using
$\pa_1\hat\gamma=x^{-1}\hat\gamma$, the non-degeneracy for small $x$ follows from
non-degeneracy at $x=0$, which in turn follows from the corresponding
property for the asymptotic metric $g_\infty$.
Hence, evaluating at $x=0$ and using the explicit $g_\infty$-flow from
\eqref{eq:conic-bichar}, we have
$$
(x^{-1}\hat\gamma^{(1)},\hat\gamma^{(2)})=\Big(\frac{\sin(r+r_0)}{\sin r_0},\exp(rH_{h/2})(y_0,\hat\mu_0)\Big)
$$
with $\omega=h^{-1}(\hat\mu_0)$ and $\lambda=\cot r_0$, so the
non-degeneracy is equivalent to $(x^{-1}\hat\gamma^{(1)},\hat\gamma^{(2)})$ being non-degenerate as a
function of $(r,r_0,\hat\mu_0)$, which in turn is immediately seen as
being equivalent to the absence of conjugate points under the boundary
metric $h$ within distance $\pi/2$.

\subsection{Invertibility of the geodesic X-ray transform on a collar
  neighborhood of infinity}\label{sec:X-ray-inv}
After these geometric preliminaries we work out the form of the
Schwartz kernel of $A_h=e^{-\Phi/h}L\tilde\chi Ie^{\Phi/h}$.
Relative to the density $|dz'|=|dx'\,dy'|$, this is, with
$z=(x,y)$, $z'=(x',y')$,
\begin{equation}\begin{aligned}\label{eq:Ah-SK-delta}
K_{A_h}(x,y,x',y')&=\int
e^{-\Phi(x)/h}e^{\Phi(x(\gamma_{x,y,\lambda,\omega}(t)))/h}\tilde\chi(x,y,\lambda/(h^{1/2}x),\omega)
\\
&\qquad\qquad\qquad\delta(z'-\gamma_{z,\lambda,\omega}(t))
\gamma^{(1)}_{z,\lambda,\omega}(t)^{-1}\,dt\,|d\sigma|,
\end{aligned}\end{equation}
since that of $I$ is
$$
K_I(z,\lambda,\omega,z')=\int\delta(z'-\gamma_{z,\lambda,\omega}(t))
\gamma^{(1)}_{z,\lambda,\omega}(t)^{-1}\,dt,
$$
with $\gamma^{(1)}_{z,\lambda,\omega}(t)^{-1}$ corresponding to the
Jacobian factor in \eqref{eq:sc-reparameterized-Xray}.
Hence,
\begin{equation}\begin{aligned}\label{eq:Ah-SK}
    &K_{A_h}(x,y,x',y')\\
    &=(2\pi)^{-n}h^{-n/2-1/2}\int
e^{-\Phi(x)/h}e^{\Phi(x(\gamma_{x,y,\lambda,\omega}(t)))/h}\tilde\chi(x,y,\lambda/(h^{1/2}x),\omega)\\
&\qquad\qquad\qquad e^{-i\xi'(x'-\gamma^{(1)}_{z,\lambda,\omega}(t))/h}e^{-i\eta'\cdot(y'-\gamma^{(2)}_{z,\lambda,\omega}(t))/h^{1/2}}\gamma^{(1)}_{z,\lambda,\omega}(t)^{-1}\,dt\,|d\sigma|\,|d\xi'|\,|d\eta'|,
\end{aligned}\end{equation}
where we wrote the delta distribution as a semiclassical foliation
Fourier transform of the constant function $(2\pi)^{-n}$, with
$$
e^{-i\xi'(x'-\gamma^{(1)}_{z,\lambda,\omega}(t))/h}e^{-i\eta'\cdot(y'-\gamma^{(2)}_{z,\lambda,\omega}(t))/h^{1/2}}
$$
being the kernel of the Fourier transform. The integrand of the
$dt\,|d\sigma|$ integral can be considered as
a semiclassical foliation Fourier transform $(\xi',\eta')\to(x',y')$
of
\begin{equation}\begin{aligned}\label{eq:Ah-SK-IFT}
    (2\pi)^{-n}
    e^{-\Phi(x)/h}e^{\Phi(x(\gamma_{x,y,\lambda,\omega}(t)))/h}&\tilde\chi(x,y,\lambda/(h^{1/2}x),\omega)
    \gamma^{(1)}_{z,\lambda,\omega}(t)^{-1}\\
    &\qquad\qquad 
    e^{i\xi'\gamma^{(1)}_{z,\lambda,\omega}(t)/h}e^{i\eta'\cdot\gamma^{(1)}_{z,\lambda,\omega}(t)/h^{1/2}},
  \end{aligned}\end{equation}
with the factors on the first line independent of the Fourier
transform variables $(\xi',\eta')$. For the purposes below, it is
useful to have the Schwartz kernel relative to the density
$\frac{|dx'\,dy'|}{(x')^{n+2}}$, cf.\ our definition of the 1-cusp algebra. In view of the delta distribution in
\eqref{eq:Ah-SK-delta}, this can be achieved by adding a factor
$(\gamma^{(1)}_{z,\lambda,\omega}(t))^{n+2}$ to \eqref{eq:Ah-SK}, and
thus to \eqref{eq:Ah-SK-IFT}.

In order to proceed, we recall from \eqref{eq:1c-quantize} that
1-cusp operators are given by the oscillatory integral
\begin{equation*}\begin{aligned}
    &A_h u(x,y)=Au(x,y,h)\\
    &=(2\pi)^{-n} h^{-n/2-1/2}\int e^{i\Big(\frac{x-x'}{x^3}\frac{\widetilde{\xiocu}}{h}+\frac{y-y'}{x}\frac{\widetilde{\etaocu}}{h^{1/2}}\Big)}a_h(x,y,\widetilde{\xiocu},\widetilde{\etaocu})\,u(x',y')\,\frac{dx'\,dy'}{(x')^{n+2}}\,d\widetilde{\xiocu}\,d\widetilde{\etaocu},
  \end{aligned}\end{equation*}
where $a$ is a standard (conormal) symbol. Thus, the Schwartz kernel is
$$
K_{A_h}(x,y,x',y')=(2\pi)^{-n} h^{-n/2-1/2}\int e^{i\Big(\frac{x-x'}{x^3}\frac{\widetilde{\xiocu}}{h}+\frac{y-y'}{x}\frac{\widetilde{\etaocu}}{h^{1/2}}\Big)}a_h(x,y,\widetilde{\xiocu},\widetilde{\etaocu})\,d\widetilde{\xiocu}\,d\widetilde{\etaocu},
$$
relative to the density $\frac{dx'\,dy'}{(x')^{n+2}}$.
, i.e.\ they are
$(2\pi)^{-n}x^{n+2}$ (with the second factor due to the Jacobian in
scaling the Fourier transform) times the semiclassical foliation Fourier transform in
$(x^{-3}\widetilde{\xiocu},x^{-1}\widetilde{\etaocu})$ of
$$
(x,y,\widetilde{\xiocu},\widetilde{\etaocu})\mapsto e^{i(x^{-2}\widetilde{\xiocu}/h+x^{-1}y\cdot\widetilde{\etaocu}/h^{1/2})} a(x,y,\widetilde{\xiocu},\widetilde{\etaocu}).
$$
Inverting this Fourier transform, evaluating at $(x^{-3}\xiocu,x^{-1}\etaocu)$,
\begin{equation*}\begin{aligned}
    &a_h(x,y,\xiocu,\etaocu)\\
    &=(2\pi)^nx^{-n-2}
    e^{i(-x^{-2}\xiocu/h-x^{-1}y\cdot\etaocu/h^{1/2})}\\
    &\qquad\qquad\qquad(\cF^{-1}_{h,\cF})_{(x',y')\to (x^{-3}\xiocu,x^{-1}\etaocu)}K_{A_h}(x,y,x',y').
\end{aligned}\end{equation*}
Proceeding from \eqref{eq:Ah-SK}, taking into account the Fourier
transform statement following it, we obtain
\begin{equation}\begin{aligned}\label{eq:semicl-ocu-full-symbol}
    &a_h(x,y,\xiocu,\etaocu)\\
    &=x^{-n-2}e^{-ix^{-3}\xiocu
      x/h}e^{-ix^{-1}\etaocu\cdot y/h^{1/2}}\int
e^{-\Phi(x)/h}e^{\Phi(x(\gamma_{x,y,\lambda,\omega}(t)))/h}\tilde\chi(x,y,\lambda/(h^{1/2}x),\omega)\\
&\qquad\qquad\qquad
e^{ix^{-3}\xiocu\gamma^{(1)}_{z,\lambda,\omega}(t)/h}e^{ix^{-1}\etaocu\cdot\gamma^{(2)}_{z,\lambda,\omega}(t)/h^{1/2}}\gamma^{(1)}_{z,\lambda,\omega}(t)^{n+1}\,dt\,|d\sigma|\\
&=\int
e^{-\Phi(x)/h}e^{\Phi(x(\gamma_{x,y,\lambda,\omega}(t)))/h}\tilde\chi(x,y,\lambda/(h^{1/2}x),\omega)\\
&\qquad\qquad\qquad e^{ix^{-3}\xiocu(\gamma^{(1)}_{z,\lambda,\omega}(t)-x)/h}e^{ix^{-1}\etaocu\cdot(\gamma^{(2)}_{z,\lambda,\omega}(t)-y)/h^{1/2}}x^{-n-2}\gamma^{(1)}_{z,\lambda,\omega}(t)^{n+1}\,dt\,|d\sigma|.
\end{aligned}\end{equation}
Note that this corresponds to \cite[Equation~(3.8)]{Vasy:Semiclassical-X-ray},  taking into account the factor in
\eqref{eq:sc-reparameterized-Xray}, and that we write covectors as $\xiocu\frac{dx}{x^3}+\etaocu\frac{dy}{x}$, and thus
$\xi=x^{-3}\xiocu$, $\eta=x^{-1}\etaocu$ in
\cite[Equation~(3.8)]{Vasy:Semiclassical-X-ray}.
Recall that $\Phi(x)=-\frac{1}{2x^2}$ here, and
$\gamma^{(1)}_{z,\lambda,\omega}(t)^{-1}$ is bounded by
$Cx^{-1}(1+|t|)$. Thus, it remains to show that the right hand side of
\eqref{eq:semicl-ocu-full-symbol} is
$h$ times
a symbol of order $-1,-1$, i.e.\ $hx$ times a symbol of order $-1,0$.

We remark that \eqref{eq:semicl-ocu-full-symbol} uses local
coordinates. In general, for the Schwartz kernel $K_A(z,z')$ of our semiclassical operators we should be
considering both the possibilities that $z$ and $z'$ are in the same
chart, and also that they are away from each other, in different
charts. In the latter case $|t|$ is necessarily bounded from below by a
positive constant, and the argument below, discussed in the notation
of the same chart, applies directly and shows that the Schwartz
kernel is Schwartz and is $O(h^\infty)$. Indeed, in the argument given
when discussing that region in $t$ in the oscillatory integral the unprimed
variables can be regarded as fixed, so for many purposes there is not even a need to
consider a coordinate chart explicitly, and in any case the device
described for the cusp pseudodifferential operators of taking two
disjoint open sets $O,U$ and identifying them with different open sets
of $\RR^n$, now in the parabolic sense, would be applicable. In the
direct treatment one can describe the Schwartz kernel directly (as
opposed to through the symbol, which requires a Fourier transform)
which is residual in these regions in terms of \eqref{eq:Ah-SK}; our
computations then directly show that prior to the $|d\xi'|\,|d\eta'|$
integrals one already has rapid decay and smoothness in all variables
(both $x,h$ and $\xi',\eta'$).

We break up our analysis into four regions by the use of a partition
of unity (which we suppress in notation): $|t|<C_1h^{1/2}x$,
$C_2h^{1/2}x<|t|<T_0$ (with $T_0$ corresponding to both flow and
coordinate considerations, so it is sufficiently small and positive),
$0<C_3<|t|<C_4$ ($C_3,C_4$ arbitrary positive) and $|t|$ near infinity, though
the third and fourth regions can be combined.

We first analyze the pseudodifferential, i.e.\ near diagonal,
behavior of the Schwartz kernel. For this, we may take $z,z'$ in the
same chart and $|t|$ bounded by a constant $T_0>0$. In order to proceed,
we change the variables of integration to $\hat t=t/(h^{1/2} x)$
and $\hat\lambda=\lambda/(h^{1/2}x)$, so the $\hat\lambda$ integration is over
a fixed interval. The phase is
\begin{equation}\begin{aligned}\label{eq:semicl-ocu-phase}
    &x^{-3}\xiocu\cdot(\gamma^{(1)}_{z,\lambda,\omega}(t)-x)/h+x^{-1}\etaocu\cdot(\gamma^{(2)}_{z,\lambda,\omega}(t)-y)/h^{1/2}\\
&=\xiocu(\hat\lambda\hat t+\alpha (x,y,xh^{1/2}\hat\lambda,\omega)\hat t^2+xh^{1/2}\hat
t^3\Gamma^{(1)}(x,y,xh^{1/2}\hat\lambda,\omega,xh^{1/2}\hat t))\\
&\qquad\qquad+\etaocu\cdot(\omega\hat t+xh^{1/2}\hat t^2\Gamma^{(2)}(x,y,xh^{1/2}\hat\lambda,\omega,xh^{1/2}\hat t)),
\end{aligned}\end{equation}
while the exponential damping factor (which we regard as a Schwartz
function, part of the amplitude, when one regards $\hat t$ as a
variable on $\RR$) is (recall that $\alpha<0$!)
\begin{equation}\begin{aligned}\label{eq:semicl-ocu-exp-damping}
    &1/(2hx^2)-1/(2h\gamma^{(1)}_{x,y,\lambda,\omega}(t)^2)\\
    &=\frac{1}{2}h^{-1}(\gamma^{(1)}_{x,y,\lambda,\omega}(t)^2-x^2)x^{-2}(\gamma^{(1)}_{x,y,\lambda,\omega}(t))^{-2}\\
&=
\frac{1}{2} h^{-1}x(\lambda t+\alpha (x,y,xh^{1/2}\hat\lambda,\omega)
t^2+t^3\Gamma^{(1)}(x,y,\lambda,\omega,t))\\
&\qquad\qquad\qquad x(2+\lambda t+\alpha (x,y,xh^{1/2}\hat\lambda,\omega)
t^2+t^3\Gamma^{(1)}(x,y,\lambda,\omega,t))\\
&\qquad\qquad\qquad x^{-2}x^{-2}(1+\lambda t+\alpha (x,y,xh^{1/2}\hat\lambda,\omega)
t^2+t^3\Gamma^{(1)}(x,y,\lambda,\omega,t))^{-2}\\
&=\hat\lambda\hat t+\alpha (x,y,xh^{1/2}\hat\lambda,\omega)\hat
t^2+\hat t^3 xh^{1/2}\hat\Gamma^{(1)}(x,y,xh^{1/2}\hat\lambda,\omega,xh^{1/2}\hat t),
\end{aligned}\end{equation}
with $\hat\Gamma^{(1)}$ a smooth function. Thus, as we explain below
in more detail, for $\xiocu,\etaocu$ in a
bounded region we conclude that $a_h$ is a $\CI$
function of all variables, including $h^{1/2}$. Furthermore, we observe that with $(\xiocu,\etaocu)$ in place
of $(\xi,\eta)$, and in the new integration variables $\hat t$ and
$\hat\lambda$, \eqref{eq:semicl-ocu-full-symbol} has the same form as
\cite[Equation~(3.8)]{Vasy:Semiclassical-X-ray}, so identical stationary phase arguments
are applicable.

\begin{rmk}\label{rmk:1cusp-phase-exp-p}
If we used the scaling $\hat\lambda=\lambda/(h^{1/2}x^p)$ in the definition of
$\tilde\chi$, and replaced $x$ by $x^p$ in the definition of $\Phi$,
with more precisely $\Phi=-\frac{1}{2px^{2p}}$, we
would obtain essentially the same rescaled result for the phase and
the exponential weight. Namely, write
$\widetilde{\xiocu},\widetilde{\etaocu}$ for the (slightly modified) $x^p$-based 1-cusp dual
variables, i.e.\ write covectors as
$$
\widetilde{\xiocu} \frac{dx^p}{x^{3p}}+\widetilde{\etaocu} \frac{dy}{x^{p}}=p \widetilde{\xiocu} \frac{dx}{x^{2p+1}}+\widetilde{\etaocu} \frac{dy}{x^{p}}.
$$
Then the right hand side of the phase,
\eqref{eq:semicl-ocu-phase}, becomes,
and $\hat t=t/(h^{1/2}x^p)$,
\begin{equation*}\begin{aligned}
&p\widetilde{\xiocu}(\hat\lambda\hat t+\alpha (x,y,x^ph^{1/2}\hat\lambda,\omega)\hat t^2+x^ph^{1/2}\hat
t^3\Gamma^{(1)}(x,y,x^ph^{1/2}\hat\lambda,\omega,x^ph^{1/2}\hat t))\\
&\qquad\qquad+\widetilde{\etaocu}\cdot(\omega\hat t+x^ph^{1/2}\hat t^2\Gamma^{(2)}(x,y,x^ph^{1/2}\hat\lambda,\omega,x^ph^{1/2}\hat t)),
\end{aligned}\end{equation*}
and similarly the analogue of the right hand side of
\eqref{eq:semicl-ocu-exp-damping} becomes
\begin{equation*}\begin{aligned}
    \hat\lambda\hat t+\alpha (x,y,x^ph^{1/2}\hat\lambda,\omega)\hat
t^2+\hat t^3 x^ph^{1/2}\hat\Gamma^{(1)}(x,y,x^ph^{1/2}\hat\lambda,\omega,x^ph^{1/2}\hat t).
\end{aligned}\end{equation*}
These allow all the arguments below to proceed without any significant
change; even the ellipticity computation is unaffected apart from
scaling $\widetilde{\xiocu}$ by an irrelevant factor of $p$.
  \end{rmk}

Concretely, upon the rescaling $t$ and $\lambda$ to $\hat t$ and
$\hat\lambda$, which introduces a factor of $hx^2$ from the Jacobian, the integrand of \eqref{eq:semicl-ocu-full-symbol} is
$xh$ times a
smooth function of all variables, integrated in a compact region
except in $\hat t$. However, the Gaussian decay, in view of
\eqref{eq:exponent-Gaussian-bound} and \eqref{eq:semicl-ocu-exp-damping}, of the exponential
damping factor
$$
e^{-\Phi(x)/h} e^{\Phi(x(\gamma_{z,\lambda,\omega}(t)))/h}
$$
means that this non-compactness is not an issue, and
\eqref{eq:semicl-ocu-full-symbol} itself is $hx$ times a smooth
function of all variables, in accordance to the desired $hx$ times a
symbol of order $-1,0$ conclusion, namely showing the smoothness part,
but not yet the estimates as $|(\xiocu,\etaocu)|\to\infty$.

We now consider the $|(\xiocu,\etaocu)|\to\infty$ behavior. We use the
stationary phase lemma, but with the slight complication that the
$\hat t$ integration interval is non-compact. In order to deal with
this, we divide the integration region into one in which $|\hat t|$
bounded, resp.\ one in which $|\hat t|\geq 1$. In the the former one
can use the standard parameter dependent version of the stationary phase lemma, while in
the latter the phase is non-stationary and one can use a direct
integration by parts argument.

Starting with the former, at
$h^{1/2}=0$, the phase is
$$
\xiocu(\hat\lambda\hat t+\alpha (x,y,0,\omega)\hat t^2)+\etaocu\cdot
\omega\hat t.
$$
Consider first $\xiocu\neq 0$: taking the $\hat\lambda$ derivative shows that
$\hat t=0$ at the critical set, and thus taking the $\hat t$
derivative shows that $\xiocu\hat\lambda+\etaocu\cdot\omega=0$, i.e.\
$\hat\lambda=-\xiocu^{-1}\etaocu\cdot\omega$; this is actually
critical with respect to the full set $(\hat
t,\hat\lambda,\omega)$. Moreover, this set remains critical for
$h^{1/2}$ non-zero due to the $\hat t^2$ vanishing factors in other
terms of the phase. At $h^{1/2}=0$ the Hessian of the phase in $(\hat t,\hat\lambda)$
at the critical set
is
$$
\begin{pmatrix} 2\xiocu \alpha
  (x,y,0,\omega)&\xiocu\\\xiocu&0\end{pmatrix},
$$
which is invertible,
with determinant $-\xiocu^2$,
hence remains so for small $h$. Thus,
regarding $\omega$ as a parameter, the stationary phase lemma applies
and yields that in this region $a_{h}$ is $xh$ times (due to the
Jacobian factor discussed above!) a symbol of order $-1$ (from
the reciprocal of the square root of the Hessian determinant). On the
other hand, when $\etaocu\neq 0$ (for the behavior as
$|(\xiocu,\etaocu)|\to\infty$ we only need to consider when at least
one of $\xiocu$ and $\etaocu$ is non-zero), decompose $\omega$ corresponding to
$\etaocu$ into a parallel and an orthogonal component, writing
$\omega^\parallel=\omega\cdot\widehat\etaocu$,
$\widehat\etaocu=\frac{\etaocu}{|\etaocu|}$, so the phase at $h^{1/2}=0$ becomes 
\begin{equation}\label{eq:phase-etaocu-large}
|\etaocu|\Big(\frac{\xiocu}{|\etaocu|}(\hat\lambda\hat t+\alpha (x,y,0,\omega)\hat t^2)+
\omega^\parallel\hat t\Big).
\end{equation}
First, taking the $\hat\lambda$ derivative shows that either
$\frac{\xiocu}{|\etaocu|}=0$ or $\hat t=0$ at the critical set, and in
the former case taking the $\hat t$ derivative shows that
$\omega^\parallel=0$, while in the latter case the same $\hat t$
derivative shows that (as we already have $\hat t=0$)
$\frac{\xiocu}{|\etaocu|}\hat\lambda+\omega^\parallel=0$, so in view
of the boundedness of $\hat\lambda$ and as we may assume the smallness
of $\frac{\xiocu}{|\etaocu|}$ in view of the already treated case,
$|\omega^\parallel|$ is bounded away from $1$, and thus
$\omega^\parallel$ is a valid coordinate at the critical set, $\hat t=0$,
$\frac{\xiocu}{|\etaocu|}\hat\lambda+\omega^\parallel=0$, which is indeed
critical with respect to the full set $(\hat t,\hat
\lambda,\omega^\parallel,\omega^\perp)$ of parameters. Further, this
remains also true for $h^{1/2}$ non-zero due to the $\hat t^2$
vanishing factors in the other terms of the phase. The Hessian with
respect to $(\hat t,\omega^\parallel)$ is
$$
|\etaocu|\begin{pmatrix}2\frac{\xiocu}{|\etaocu|}\alpha
  (x,y,0,\omega)&1\\1&0\end{pmatrix},
$$
which is again invertible,
  with determinant $-|\etaocu|^2$, and thus remains so for $h^{1/2}$
  small. Thus, regarding $\hat\lambda,\omega^\perp$ as parameters, the
  stationary phase lemma applies and yields that in this region as
  well $a_{h}$ is $xh$ times a symbol of order $-1$. This completes
  the proof of the symbolic behavior of the contribution of the
  oscillatory intergral \eqref{eq:semicl-ocu-full-symbol} from $\hat
  t$ bounded.

In hindsight, as this will be useful for the symbolic computation, we
can rewrite the phase by regarding $\theta=(\hat\lambda,\omega)$ and
$(\xiocu,\etaocu)$ jointly, writing the latter as $|(\xiocu,\etaocu)|(\widehat\etaocu,\widehat\xiocu)$:
$$
|(\xiocu,\etaocu)|(\widehat\xiocu(\hat\lambda\hat t+\alpha (x,y,0,\omega)\hat t^2)+\widehat\etaocu\cdot
\omega\hat t.
$$
Decomposing $\theta$ into parallel and orthogonal components relative
to $(\widehat\etaocu,\widehat\xiocu)$, so
$\theta^\parallel=(\widehat\etaocu,\widehat\xiocu)\cdot\theta$, the
phase is the large parameter $|(\xiocu,\etaocu)|$ times
\begin{equation}\label{eq:phase-theta}
\theta^\parallel\hat t+\widehat\xiocu\alpha (x,y,0,\omega)\hat t^2.
\end{equation}
As we already know that at the critical set $\hat t=0$, we deduce that
it is given by $\theta^\parallel=0$, and that the Hessian of the phase
there with respect to $(\hat t,\theta^\parallel)$ is
$$
|(\xiocu,\etaocu)|\begin{pmatrix}2 \widehat\xiocu\alpha
  (x,y,0,\omega)&1\\1&0\end{pmatrix}.
$$

We next analyze the $|t|\leq T_0$ small, $|\hat t|\geq 1$ region. Here
we use a direct integration by parts argument, utilizing that if the
derivative of the phase with respect to one of the integration
variables $(\hat t,\hat\lambda,\omega)$ is bounded below by a positive
multiple of $|(\xiocu,\etaocu)| |\hat t|^{-k}$ for some $k$,
integration by parts in this variable, taking into account the
Gaussian exponential damping factor bounded by $e^{-\ep
  t^2/(hx^2)}=e^{-\ep\hat t^2}$ in $|t|>Ch^{1/2}x$ by \eqref{eq:exponent-Gaussian-bound}, gives rapid decay of the
integral with respect to the large parameter
$|(\xiocu,\etaocu)|$. Hence, it remains to check that in all regions
one has such a lower bound for some derivative; again it suffices to
check this at $h^{1/2}=0$. If $\xiocu\neq 0$, then
the $\hat\lambda$ derivative of the phase is $\xiocu\hat t$, giving
the desired statement. If $\etaocu\neq 0$, the form
\eqref{eq:phase-etaocu-large} of the phase shows that first of all the
$\hat\lambda$ derivative has such a lower bound as soon as
$\frac{|\xiocu|}{|\etaocu|}$ is bounded from below by $|\hat t|^{-2}$. Then
as long as
$\omega^\parallel$ is a valid coordinate, the derivative with respect to $\omega^\parallel$ is
$|\etaocu|\hat t$, giving the desired lower bound. The remaining case, when
$\omega^\parallel$ is not a valid coordinate, i.e.\ when
$|\omega^\parallel|$ is close to $1$, and
$\frac{|\xiocu|}{|\etaocu|}\leq |\hat t|^{-2}$. In this case the $\hat t$
derivative becomes
$|\etaocu|(\frac{\xiocu}{|\etaocu|}(\hat\lambda+2\alpha\hat
t)+\omega^\parallel)$, which is now bounded away from $0$, completing
the proof of the direct integration by parts argument in the region $|t|\leq T_0$ small, $|\hat t|\geq 1$.

Consider now the region where $t$ is bounded away from $0$, but is
bounded; in this case the exponential weight is bounded by
$e^{-\ep/(x^2h)}$, cf.\ \eqref{eq:exponent-Gaussian-bound}, thus is rapidly decaying. Recall that the phase is
\begin{equation*}\begin{aligned}
    &x^{-3}\xiocu\cdot(\gamma^{(1)}_{z,\lambda,\omega}(t)-x)/h+x^{-1}\etaocu\cdot(\gamma^{(2)}_{z,\lambda,\omega}(t)-y)/h^{1/2}\\
    &\qquad=x^{-1}h^{-1/2}\Big((x^{-1}\xiocu/h^{1/2})(x^{-1}\gamma^{(1)}_{z,\lambda,\omega}(t)-1)+\etaocu\cdot(\gamma^{(2)}_{z,\lambda,\omega}(t)-y)\Big),
\end{aligned}\end{equation*}
and $\pa_t(x^{-1}\gamma^{(1)})$ is non-zero (bounded away from $0$) in this region by the convexity
properties of the foliation. Thus, for $t$ away from $0$ there is $C_0>0$ such that if $|\xiocu| x^{-1}/h^{1/2}>C_0|\etaocu|$
    then the phase is non-stationary with respect to $t$, hence the
    integral is rapidly decaying in
    $|(\xiocu,\etaocu)|/(h^{1/2}x)$. On the other hand,
under the no conjugate points assumption, in the precise sense
described in Section~\ref{sec:geodesic-structure}, letting $\widetilde\xiocu=x^{-1}\xiocu/h^{1/2}$, if $|\widetilde\xiocu|=|\xiocu|
    x^{-1}/h^{1/2}<2C_0|\etaocu|$, one has the standard
    no-conjugate points argument available as the phase is a standard
    homogeneous degree 1 phase in $(\widetilde\xiocu,\etaocu)$ times
    $x^{-1}h^{-1/2}$. Here the no-conjugate points argument uses the
    non-degenerateness (full rank of the Jacobian) of the smooth function
    $(x^{-1}\gamma^{(1)},\gamma^{(2)})$ as a function of
    $(t,\lambda,\omega)$, $t\neq 0$, as discussed in
    Section~\ref{sec:geodesic-structure}.

Thus, it remains to consider $|t|\to\infty$.  Following Section~\ref{sec:geodesic-structure},
it is very useful to change the parameterization again,
keeping in mind the global nature of the Hamilton flow, to $r$ from
$t$. Recall that here we write $\hat\gamma=\hat\gamma_{x,y,\lambda,\omega}(r)$,
with $\frac{dr}{dt}=|\etasc(\gamma(t))|$, and the integral curves
intersect the boundary in finite time; this enables standard
integration by parts arguments. As follows from \eqref{eq:exp-weight-radial}, the amplitude is exponentially
decaying in $h^{-1}(\hat\gamma^{(1)}_{x,y,\lambda,\omega}(r))^{-2}$,
thus together with the extra factor
$$
x(\hat\gamma(r))^{-1}|\etasc(\hat\gamma(r))|^{-1}=x(\hat\gamma(r))^{-2}x
F(x,y,\lambda,\omega,r)
$$
it still has this property, suppressing the
endpoint of the bicharacteristic. Now, if $|\xiocu|
x^{-1}/h^{1/2}>C_0|\etaocu|$ with a sufficiently large $C_0$, then the
phase is non-stationary with respect to $r$, giving the desired decay
result; otherwise the no-conjugate points assumption achieves
this. Again, as discussed in Section~\ref{sec:geodesic-structure}, for this the non-degeneracy of the smooth function
$(x^{-1}\hat\gamma^{(1)},\hat\gamma^{(2)})$ of $(r,\lambda,\omega)$ is
used, where in fact polynomial in $\hat\gamma^{(1)}$ degeneracy of the
derivative is
acceptable due to the exponential decay of the weight. In combination
this proves the claimed pseudodifferential property.

Finally, the ellipticity computation is as in
\cite[Equation~(3.12)]{Vasy:Semiclassical-X-ray} with $(\xiocu,\etaocu)$ in place of
$(\xisc, \etasc)$. Concretely, in order to compute the semiclassical
principal symbol from  \eqref{eq:semicl-ocu-full-symbol}, we may
simply let $h^{1/2}=0$ in the rescaled expression, apart from the
overall prefactor, so
\begin{equation*}\begin{aligned}
a_{h}(x,y,\xiocu,\etaocu)=
xh\int &e^{i(\xiocu(\hat\lambda\hat t+\alpha(x,y,0,\omega)\hat
  t^2)+\etaocu\cdot\omega\hat t)} \\
&\qquad e^{\hat\lambda\hat t+\alpha(x,y,0,\omega)\hat t^2}\tilde\chi(z,\hat\lambda,\omega) 
\,d\hat
t\,d\hat\lambda\,d\omega,
\end{aligned}\end{equation*}
up to errors gaining $O(xh^{1/2}\langle\xiocu,\etaocu\rangle^{-1})$ relative
to the leading order $O(xh\langle\xiocu,\etaocu\rangle^{-1})$. In
order to compute the principal symbol of this, i.e.\ the behavior as $|(\xiocu,\etaocu)|\to\infty$, we
recall from \eqref{eq:phase-theta} that it is useful to regard
$\theta=(\hat\lambda,\omega)$ as a joint variable, decomposed relative
to $(\widehat\etaocu,\widehat\xiocu)$, with critical set given by
$\hat t=0$, $\theta^\parallel=0$. Thus, by the stationary phase lemma, the principal symbol of the
semiclassical principal symbol is an elliptic multiple of
$$
\int_{\sphere^{n-2}} \tilde\chi(z,\hat\lambda(\theta^\perp),\omega(\theta^\perp))\,d\theta^{\perp},
$$
which is elliptic for $\chi\geq 0$ with $\chi(0,\cdot)>0$ since the
codimension one planes $\theta^\parallel=0$ and $\hat\lambda=0$
necessarily intersect in a line through the origin, and thus
non-trivially intersect the sphere as $n\geq 2+1=3$. Hence it remains
to compute the semiclassical principal symbol at finite points.

The computation at finite points is more difficult, but it simplifies
greatly if we take $\chi$ to be a Gaussian (which is not technically allowed, but
we will approximate it below). Namely, recalling that
$\alpha<0$, the semiclassical principal symbol is, for $c$ a non-zero constant,
\begin{equation*}\begin{aligned}
xh\int &e^{\alpha\hat t^2(1+i\xiocu)+\hat t(\hat\lambda(1+i\xiocu)+i\etaocu\cdot\omega)}\tilde\chi(z,\hat\lambda,\omega) 
\,d\hat
t\,d\hat\lambda\,d\omega\\
&=xh\int e^{\alpha(1+i\xiocu)(\hat t+\frac{\hat\lambda(1+i\xiocu)+i\etaocu\cdot\omega}{2\alpha(1+i\xiocu)})^2}e^{\frac{(\hat\lambda(1+i\xiocu)+i\etaocu\cdot\omega)^2}{4\alpha(1+i\xiocu)}}\tilde\chi(z,\hat\lambda,\omega) 
\,d\hat
t\,d\hat\lambda\,d\omega\\
&=cxh\int |\alpha|^{-1/2}(1+i\xiocu)^{-1/2}e^{-\frac{(\hat\lambda(1+i\xiocu)+i\etaocu\cdot\omega)^2}{4\alpha(1+i\xiocu)}}\tilde\chi(z,\hat\lambda,\omega) 
\,d\hat\lambda\,d\omega.
\end{aligned}\end{equation*}
Letting $\tilde\chi(z,\hat\lambda,\omega)=e^{\hat\lambda^2/(2\alpha)}$ this can be
rewritten as
\begin{equation*}\begin{aligned}
    &cxh\int
    |\alpha|^{-1/2}(1+i\xiocu)^{-1/2}e^{\frac{\hat\lambda^2}{2\alpha}}
    e^{-\frac{\hat\lambda^2(1+i\xiocu)}{4\alpha}}
    e^{-i\frac{\hat\lambda\etaocu\cdot\omega}{2\alpha}} e^{\frac{(\etaocu\cdot\omega)^2}{4\alpha(1+i\xiocu)}}
    \,d\hat\lambda\,d\omega\\
    &=cxh\int
    |\alpha|^{-1/2}(1+i\xiocu)^{-1/2}
    e^{\frac{\hat\lambda^2(1-i\xiocu)}{4\alpha}}
    e^{-i\frac{\hat\lambda\etaocu\cdot\omega}{2\alpha}} e^{\frac{(\etaocu\cdot\omega)^2}{4\alpha(1+i\xiocu)}}
    \,d\hat\lambda\,d\omega\\
    &=cxh\int
    |\alpha|^{-1/2}(1+i\xiocu)^{-1/2}
    e^{\frac{1-i\xiocu}{4\alpha}(\hat\lambda-i\frac{\etaocu\cdot\omega}{1-i\xiocu})^2}
    e^{\frac{(\etaocu\cdot\omega)^2}{4\alpha(1-i\xiocu)}} e^{\frac{(\etaocu\cdot\omega)^2}{4\alpha(1+i\xiocu)}}
    \,d\hat\lambda\,d\omega\\
    &=c'xh\int
    |\alpha|^{-1}(1+\xiocu^2)^{-1/2}
    e^{\frac{(\etaocu\cdot\omega)^2}{2\alpha(1+\xiocu^2)}}
    \,d\omega
  \end{aligned}\end{equation*}
with $c'$ non-zero, and now the integral is positive since the
integrand is such. Since we need $\chi$ to be compactly supported, we
approximate the Gaussian in the space of Schwartz functions by
compactly supported $\chi$; for suitable approximation the same
positivity property follows. This completes the proof of the
ellipticity, and thus the proof of Theorem~\ref{ispsdo}.

\subsection{Consequences of Theorem~\ref{ispsdo}}\label{sec:ispsdo-conseq}
Having proved Theorem~\ref{ispsdo}, we can apply the results from Section~\ref{sec:semicl-alg} to the modified
normal operator $A$. This means that there is a parametrix $B$ in this
new operator class, where the errors $A \circ B - \Id$ and $B \circ A
- \Id$ are residual operators in the semiclassical foliation 1-cusp
algebra. However, this ellipticity only applied for $x_0 \le \bar
x_0$, and so these residual errors are only residual on the operator
over this domain. This can be used by viewing the operator $A$
acting on functions with support in this region $x_0 \le \bar
x_0$. For functions supported in this collar region we conclude:

\begin{crl}\label{cor:main-conic-op-support}
The modified normal operator $A$, in a region where $x \le \bar x_0$, has a left-parametrix which in the region $x \le \bar x_0$ is in the class $h^{-1}\Psiocuhh^{1, 1}$ 
with error in $h^\infty\Psiocuhh^{-\infty, -\infty}$, and therefore
for any sufficiently small $h$, $A$ is
left invertible on functions supported in this region. 
\end{crl}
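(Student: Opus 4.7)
The plan is to apply the semiclassical elliptic parametrix construction from Section~\ref{sec:semicl-alg} directly to $A$, after localizing to the region of ellipticity. By Theorem~\ref{ispsdo} write $A = h\tilde A$ with $\tilde A \in \Psiocuhh^{-1,-1}$; its principal symbol $a$ satisfies the 1-cusp/semiclassical ellipticity estimate throughout $\{x \le \bar x_0\}$ for all sufficiently small $h$. Fix $\bar x_0' \in (\bar x_0, x_0)$ and choose $\chi \in \CI(M)$ with $\chi \equiv 1$ on $\{x \le \bar x_0\}$ and $\supp \chi \subset \{x \le \bar x_0'\}$, so that $\tilde A$ is elliptic on $\supp \chi$ in the full semiclassical foliation 1-cusp sense.

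Next, I would run the standard symbolic iteration in this algebra. Set $b_0 = \chi/a$ and let $B_0 \in \Psiocuhh^{1,1}$ be a quantization; the composition formula of Section~\ref{sec:semicl-alg} yields
\[
B_0 \tilde A = \chi\,\Id + R_1, \qquad R_1 \in h^{1/2}\Psiocuhh^{0,0},
\]
with $R_1$ having symbol supported in $\supp \chi$ modulo a priori residual terms. Iteratively choose $B_k$ with symbol supported in $\supp \chi$ and lowering both the $h^{1/2}$-order and the symbolic order of the remainder at each stage, and then asymptotically sum in both sets of orders (Borel-type) to obtain $\tilde B \in \Psiocuhh^{1,1}$ with
\[
\tilde B \tilde A = \chi\,\Id + E, \qquad E \in h^\infty \Psiocuhh^{-\infty,-\infty}.
\]
Setting $B := h^{-1}\tilde B \in h^{-1}\Psiocuhh^{1,1}$ we have $BA = \chi\,\Id + E$, and in particular $BAu = u + Eu$ whenever $\supp u \subset \{x \le \bar x_0\}$.

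To conclude left invertibility, apply the mapping property for the semiclassical foliation 1-cusp algebra (Section~\ref{sec:semicl-alg}) to get $\|E\|_{\Hocuhh^{s,r}\to\Hocuhh^{s,r}} = O(h^N)$ for every $N$. Thus for all $h$ below some $h_0$, $\Id + E$ is invertible on the closed subspace of functions supported in $\{x \le \bar x_0\}$ by a convergent Neumann series, and $(\Id + E)\inv B$ is the desired genuine left inverse of $A$ on this subspace. The main technical point to verify is that the cutoff $\chi$ is correctly propagated through the symbolic iteration — i.e., that each $b_k$ can genuinely be taken with $x$-support in $\{x \le \bar x_0'\}$, so that $\tilde B$ belongs globally to $\Psiocuhh^{1,1}$ and the successive remainders really lie in the residual class with the claimed $h^{1/2}$ improvements. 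This is a consequence of the diagonal concentration of Schwartz kernels modulo $h^\infty$ residuals and the locality of the leading symbol composition from Section~\ref{sec:semicl-alg}, so no new analytic input is needed beyond what has already been established.
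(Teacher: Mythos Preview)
Your proof is correct and follows essentially the same approach as the paper: construct a parametrix in the region of ellipticity using the symbolic calculus of Section~\ref{sec:semicl-alg}, localize with a cutoff identically $1$ on $\{x\le\bar x_0\}$, and then invert $\Id+E$ by a Neumann series for small $h$. The only cosmetic difference is that you fold the cutoff $\chi$ into the symbolic iteration from the outset (obtaining $BA=\chi\,\Id+E$ with $E$ globally residual), whereas the paper first invokes a microlocal parametrix $B$ with $\Id-BA\in\Psiocuhh^{0,0}$ globally and then sandwiches this error between cutoffs $\phi$ to make $\phi(\Id-BA)\phi$ residual; both routes give the same error class and the same conclusion.
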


\begin{proof}[Proof of Corollary]
Let $O$ be a collar neighborhood of $\pa\overline{M}$ on which $A$ is
elliptic, and let $K=\{x\leq\bar x_0\}\subset O$. Let $\phi$ be a
cutoff function, identically $1$ on $K$, supported in $O$. Let $O'$ be
open
with $\overline {O'}\subset O$ and $\supp\phi\subset O'$.
Then ellipticity gives us that there is an operator $B \in
h^{-1}\Psiocuhh^{1,1}$, such that the errors $E_1 = \Id - A \circ B$
and $E_2 = \Id - B \circ A$, while globally only satisfy that $E_1,
E_2 \in \Psiocuhh^{0,0}$, but locally on $O'$ these errors are
residual, and thus $\phi E_i \phi \in h^\infty\Psiocuhh^{-\infty,
  -\infty}$, $i=1,2$. Now, $\phi BA\phi=\phi^2+\phi E_2\phi$, and for
$v$ supported in $K$, $\phi v=v$, so
$$
\phi BA v=v+\phi E_2\phi v=(\Id+\phi E_2\phi)v.
$$
Now, $\phi E_2\phi$ is $O(h^\infty)$ as a bounded operator on any
weighted Sobolev space, so for $h$ sufficiently small $\Id+\phi
E_2\phi$ is invertible, and hence
$$
v=(\Id+\phi E_2\phi)^{-1}\phi BAv.
$$
This completes the proof of the stated left invertibility.
\end{proof}

In view if the definition of $A$, taking a sufficiently small $h$, this immediately implies:

\begin{crl}\label{cor:main-conic-support}
  There is a collar neighborhood of the boundary such that
the (local) geodesic X-ray $I$ is injective on sufficiently fast Gaussian
decaying functions supported in this neighborhood.
\end{crl}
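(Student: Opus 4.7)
The plan is to deduce injectivity of $I$ from the left invertibility of the modified normal operator $A = e^{-\Phi/h} L \tilde\chi I e^{\Phi/h}$ already established in Corollary~\ref{cor:main-conic-op-support}. Concretely, I fix $\bar x_0 > 0$ small enough that Theorem~\ref{ispsdo} gives ellipticity of $A$ in the semiclassical foliation 1-cusp algebra on $\{x \leq \bar x_0\}$, and take the collar neighborhood to be $K = \{x \leq \bar x_0\}$. I then fix $h_0 > 0$ small enough that Corollary~\ref{cor:main-conic-op-support} produces a left parametrix $B \in h^{-1}\Psiocuhh^{1,1}$ satisfying $(\mathrm{Id} + \phi E_2 \phi)^{-1} \phi B A v = v$ for $v$ supported in $K$, and I select any $h \in (0, h_0)$.

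With $h$ fixed, the meaning of ``sufficiently fast Gaussian decay'' is made precise: I require that $f$ is supported in $K$ and that $u := e^{-\Phi/h} f = e^{1/(2hx^2)} f$ lies in some weighted semiclassical foliation 1-cusp Sobolev space $\Hocuhh^{s,r}$ on which $B$ and the error $\phi E_2 \phi$ act boundedly. (Since any $f$ with $|f(z)| \leq C e^{-(1/2h + \delta)/x^2}$ and similar control on 1-cusp derivatives yields $u \in \Hocuhh^{s,r}$ for appropriate $s,r$, this class is non-empty and natural.)

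Now assume $If = 0$ along every geodesic for such an $f$. Then $\tilde\chi I f = 0$ pointwise on $SM$, hence $L \tilde\chi I f = 0$, and so
\[
A u = e^{-\Phi/h} L \tilde\chi I e^{\Phi/h} u = e^{-\Phi/h} L \tilde\chi I f = 0.
\]
Since $u$ is supported in $K$, the left invertibility statement from Corollary~\ref{cor:main-conic-op-support} applies: $u = (\mathrm{Id} + \phi E_2 \phi)^{-1} \phi B A u = 0$. Since $e^{-\Phi/h}$ is strictly positive, this forces $f = 0$, establishing injectivity.

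The main obstacle, and the only nontrivial content beyond invoking Corollary~\ref{cor:main-conic-op-support}, is verifying that the Gaussian conjugation manipulation is valid on the claimed class of $f$: namely that $L \tilde\chi I f$ is well-defined for such $f$, that $u = e^{-\Phi/h} f$ indeed lies in a space on which the parametrix $B$ and the $O(h^\infty)$ error act, and that the identity $A u = e^{-\Phi/h} L \tilde\chi I f$ holds strictly (not merely formally). This reduces to checking that $\tilde\chi I f$, when pushed down by $L$, preserves enough of the Gaussian decay of $f$ to compensate the growing weight $e^{-\Phi/h}$; this follows from the pointwise bound \eqref{eq:exponent-Gaussian-bound} used in the proof of Theorem~\ref{ispsdo}, which shows that along any geodesic leaving the support of $\tilde\chi$, the weight $e^{\Phi(x(\gamma(t)))/h - \Phi(x)/h}$ is integrable and bounded. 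With this justified, the injectivity conclusion is immediate.
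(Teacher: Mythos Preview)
Your proposal is correct and takes essentially the same approach as the paper: the paper's proof is simply the one-line remark ``In view of the definition of $A$, taking a sufficiently small $h$, this immediately implies'' the corollary, and you have spelled out exactly that immediate implication (set $u=e^{-\Phi/h}f$, note $If=0\Rightarrow Au=0$, apply left invertibility). Your additional discussion of the Gaussian-conjugation bookkeeping is a reasonable elaboration of what the paper leaves implicit.
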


\subsection{Artificial boundary}\label{sec:combined-sc-1c}
In order to prove the main result, Theorem~\ref{thm:main-conic}, we simply need to add an artificial
boundary, $x=\bar x_0$. We then work on the domain
$\overline{\Omega}=\{x\leq\bar x_0\}$, which has two disjoint boundary
hypersurfaces, $x=0$ and $x=\bar x_0$. We work with a foliation
semiclassical algebra corresponding to the level sets of $x$ such that
in addition at $x=0$ the algebra is 1-cusp, while at $x=\bar x_0$ it
is scattering. Since the two boundary hypersurfaces are disjoint, this
joint algebra can simply defined by localization. Indeed,  we have
already discussed the semiclassical foliation 1-cusp algebra
$\Psiocuhh$, which gives the localized behavior near $x=0$ (or more
strongly away from $x=\bar x_0$). In addition in
\cite{Vasy:Semiclassical-X-ray} the semiclassical foliation algebra
has been defined; this is the model near $x=\bar x_0$ (and more
strongly away from $x=0$). In both algebras if $\phi,\psi$ are $\CI$
(on the compact underlying manifold) with disjoint support, the Schwartz kernels
$\phi A\psi$, where $A$ is an element of the algebra, are
$\CI$ and rapidly decreasing both in $h$ and at the boundary. Thus,
one can define the joint algebra, $\Psiscocuhh$ by:

\begin{dfn}
The space $\Psiscocuhh^{m,l_1,l_2}$ consists of operators $A$ on
$\dCI(\overline{\Omega})$ such that
\begin{enumerate}
\item
  If $\phi,\psi\in \CI(\overline{\Omega})$ with support disjoint from
  $x=0$, then $\phi A\psi\in\Psischh^{m,l_1}$.
\item
  If $\phi,\psi\in \CI(\overline{\Omega})$ with support disjoint from
  $x=\bar x_0$, then $\phi A\psi\in\Psiocuhh^{m,l_2}$.
\item
  If $\phi,\psi\in \CI(\overline{\Omega})$ with disjoint support then
  $\phi A\psi$ has Schwartz kernel which is $C^\infty$ with rapid
  vanishing in $h$ as well as all boundary hypersurfaces of
  $\overline{\Omega}\times\overline{\Omega}$.
  \end{enumerate}
\end{dfn}

Indeed, notice that if $1=\phi_{\scl}+\phi_0+\phi_{\ocul}$ is a
partition of unity with
\begin{equation*}\begin{aligned}
&\supp\phi_{\scl}\cap\{x=0\}=\emptyset,\ \supp\phi_{\ocul}\cap\{x=\bar
x_0\}=\emptyset,\\
&\supp\phi_{\scl}\cap\supp\phi_{\ocul}=\emptyset,\
\supp\phi_0\subset\{0<x<\bar x_0\}
\end{aligned}\end{equation*}
then any two of
$\phi_{\scl},\phi_0,\phi_{\ocul}$ pairwise satisfy one of these
conditions, so e.g.\ $\phi_0A\phi_{\ocul}\in \Psiocuhh^{m,l_2}$, etc.

It is straightforward to check that
$\Psiscocuhh^{\infty,\infty,\infty}$ is a tri-filtered $*$-algebra,
inheriting the properties of the two individual algebras whose
amalgamation it is.

For the X-ray problem then let both $\tilde\chi$ and $\Phi$ be a combination of all the various forms of $\tilde\chi,\Phi$ for
the ingredients. Concretely let
$$
\Phi=F\circ x,
$$
with $F'>0$, $F(x)=-\frac{1}{2x^2}$, for $x$ near $0$,
$F(x)=\frac{1}{\bar x_0-x}$, for $x$ near $\bar x_0$ (but $x<\bar
x_0$), so our exponential weight is $e^{\Phi/h}$. Also let
$$
\tilde\chi=\chi(x^{1/2}\lambda\sqrt{\Phi'}/(h^{1/2}|\alpha|^{1/2})),
$$
with $\chi$ compactly supported, non-negative, identically $1$ near
$0$. Then for $x$ near $0$,
$$
\tilde\chi=\chi(\lambda/(h^{1/2}x |\alpha|^{1/2}))
$$
as considered in the previous section (with the $|\alpha|^{1/2}$
factor irrelevant here, but showing up in the Gaussian being
approximated for ellipticity), while for $x$ near $\bar x_0$,
$$
\tilde\chi=\chi(x^{1/2}\lambda/(h^{1/2}(\bar x_0-x)|\alpha|^{1/2})),
$$
as in the scattering setting of
\cite{Vasy:Semiclassical-X-ray}, with the extra factor of $x^{1/2}$
really should be considered in the context of
$(x\lambda)/(h^{1/2}(\bar x_0-x)|x\alpha|^{1/2})$, in view of the
definition of $\lambda$ and $\alpha$ here, see
\eqref{eq:x-form-sc-flow}, vs.\ in \cite{Vasy:Semiclassical-X-ray}; in
the latter our leading factor of $x$ in \eqref{eq:x-form-sc-flow} would be incorporated into these.
The modified normal operator is then
$$
A=e^{-\Phi/h}L\tilde\chi I e^{\Phi/h}.
$$
A simple combination of the
pseudodifferential computations of the earlier
sections and \cite{Vasy:Semiclassical-X-ray} shows that this operator
is in $h\Psiscocuhh^{-1,-1,-2}(\overline{\Omega})$ provided that there are no conjugate
points on the boundary within distance $\pi/2$ as well as that
geodesics do not have conjugate points to the point of tangency to an
$x$-level set, with the latter following from the former if $\bar x_0$
is sufficiently small. Further, for suitable $\tilde\chi$,
given by approximating a Gaussian $e^{-|.|^2/2}$ on $\RR$ in Schwartz functions by
compactly supported functions $\chi$, the same combination yields ellipticity.
This proves the main theorem:

\begin{thm}\label{thm:main-conic-ops}
On a sufficiently small collar neighborhood of infinity, specified by a level set of $x$ as the artificial boundary, on an asymptotically conic
manifold with no conjugate points within distance $\pi/2$,
the modified normal operator
$e^{-\Phi/h} L\tilde\chi I
  e^{\Phi/h}\in h\Psiscocuhh^{-1,-1,-2}(\overline{\Omega})$ is elliptic in the sense of the standard (differential),
  the 1-cusp (at infinity) and scattering (at the artificial boundary) boundary as well as the semiclassical principal symbols. In
  particular, it is an invertible operator for $h$ sufficiently small.
\end{thm}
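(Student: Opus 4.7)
My plan is to verify the theorem by checking the three defining conditions of $\Psiscocuhh$ separately, then establishing ellipticity in each of the four symbolic senses (differential, 1-cusp at $x=0$, scattering at $x=\bar x_0$, and semiclassical), and finally invoking the standard parametrix plus small-$h$ argument to upgrade ellipticity to invertibility.

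First, for membership in $h\Psiscocuhh^{-1,-1,-2}(\overline{\Omega})$, I would use the partition of unity $1 = \phi_{\scl} + \phi_0 + \phi_{\ocul}$ discussed after the definition to reduce to three localized computations. Near $x = 0$ (on $\supp\phi_{\ocul} \cup \supp\phi_0$), the chosen $\tilde\chi$ and $\Phi$ reduce exactly to the forms used in Section~\ref{sec:X-ray-inv}, so Theorem~\ref{ispsdo} applies verbatim and places the localized operator in $h\Psiocuhh^{-1,-1}$. Near $x = \bar x_0$ (on $\supp\phi_{\scl} \cup \supp\phi_0$), $\tilde\chi$ and $\Phi$ reduce to the forms used in \cite{Vasy:Semiclassical-X-ray}, up to the rescalings involving $x^{1/2}$ and $|\alpha|^{1/2}$ that the excerpt explicitly identifies; this places the localized operator in $h\Psischh^{-1,-2}$. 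Finally, the off-diagonal piece (where $\phi,\psi$ have disjoint supports) comes from the fact that only geodesics of uniformly positive parameter length contribute, and on these the Gaussian weight $e^{(\Phi(\gamma(t)) - \Phi(z))/h}$ provides $O(e^{-c/h})$ decay as well as rapid decay at all boundary hypersurfaces.

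Next, I would check ellipticity in each of the four senses. The differential principal symbol (fiber infinity for bounded $x$) and the semiclassical symbol at finite fiber points both follow from the stationary phase computations at the end of Section~\ref{sec:X-ray-inv}, now carried out with the global $\tilde\chi$ and $\Phi$; the point is that the critical set and Hessian computations are local in $x$ and go through unchanged in each region. The 1-cusp symbol at $x = 0$ is precisely the ellipticity statement in Theorem~\ref{ispsdo}, and the scattering symbol at $x = \bar x_0$ is the corresponding statement in \cite{Vasy:Semiclassical-X-ray}. As in Section~\ref{sec:X-ray-inv}, positivity is established first for a Gaussian choice of $\tilde\chi$ (where the integrals can be computed in closed form) and then transferred to a compactly supported $\chi$ by Schwartz approximation, using that $\Phi'$ is strictly positive and that $\alpha < 0$ where the foliation is strictly concave from the sublevel side. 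Once all four principal symbols are verified to be elliptic, the algebraic properties inherited from the two constituent algebras (bi/tri-filtered $*$-algebra with symbolic composition and parametrix construction) furnish a $B \in h^{-1}\Psiscocuhh^{1,1,2}(\overline{\Omega})$ with $AB - \Id, BA - \Id \in h^\infty \Psiscocuhh^{-\infty,-\infty,-\infty}(\overline{\Omega})$. Since residual elements in the algebra are uniformly bounded on any fixed weighted Sobolev space and carry the prefactor $h^\infty$, their operator norms are $O(h^N)$ for every $N$, so for $h$ sufficiently small the error is $<1/2$ in norm and $A$ is invertible by Neumann series.

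The main obstacle I anticipate is bookkeeping rather than a genuine new difficulty: matching the conventions between Section~\ref{sec:X-ray-inv} and \cite{Vasy:Semiclassical-X-ray}, in particular the fact that the factor of $x$ built into our definition of $\lambda$ via the b-decomposition $\lambda x\pa_x + \omega\pa_y$ differs from the scattering convention used in that earlier paper, and propagates through the scaling of $\tilde\chi$ near $\bar x_0$; this is what forces the $x^{1/2}$ factor in the scattering regime of the chosen $\tilde\chi$. A secondary point that needs care is that the no-conjugate-points hypothesis must be verified both at the level of the $r$-parameterized flow from Section~\ref{sec:geodesic-structure} (governing the $x=0$ analysis) and at the level of the tangent-to-$\{x=\bar x_0\}$ geodesics (governing the $x=\bar x_0$ analysis); the excerpt states that for $\bar x_0$ small the latter is implied by the former, which is the geometric reason the artificial boundary can be introduced at a fixed height rather than as an asymptotic parameter.
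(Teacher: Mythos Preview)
Your proposal is correct and follows essentially the same approach as the paper: localize via the partition of unity defining $\Psiscocuhh$, invoke Theorem~\ref{ispsdo} near $x=0$ and the results of \cite{Vasy:Semiclassical-X-ray} near $x=\bar x_0$, verify the off-diagonal residual behavior, establish ellipticity via the Gaussian approximation, and conclude invertibility by parametrix plus Neumann series. The paper's own argument is in fact just the terse paragraph immediately preceding the theorem statement, and you have correctly identified and fleshed out its constituents, including the convention-matching for $\tilde\chi$ near $\bar x_0$ and the reduction of the second no-conjugate-points condition to the first for $\bar x_0$ small.
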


As a consequence, we can determine functions from their X-ray
transform {\em without} a support condition.

\begin{crl}\label{cor:main-conic}
The geodesic X-ray $I$, restricted to geodesics that stay in
$x\leq\bar x_0$, is injective on the restrictions to $x\leq\bar x_0$ of sufficiently fast Gaussian
decaying functions.
\end{crl}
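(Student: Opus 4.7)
The plan is to derive the corollary by combining the hypothesis $If=0$ with the invertibility of the modified normal operator $A = e^{-\Phi/h} L\tilde\chi I e^{\Phi/h}$ established in Theorem~\ref{thm:main-conic-ops}. Suppose $f$ is the restriction to $\overline{\Omega} = \{x \leq \bar x_0\}$ of a function with sufficiently fast Gaussian decay as $x\to 0$, and assume $If(\gamma)=0$ for every geodesic $\gamma$ contained in $\overline{\Omega}$. Set $u = e^{-\Phi/h} f$. Since $\Phi = -1/(2x^2)$ near $x=0$, the multiplier $e^{-\Phi/h}=e^{1/(2hx^2)}$ is a growing Gaussian there; since $\Phi=1/(\bar x_0-x)$ near $x=\bar x_0$, $e^{-\Phi/h}$ is smoothly decreasing to $0$ at the artificial boundary. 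Thus, provided $h$ is chosen small enough relative to the prescribed Gaussian decay rate of $f$ near $x=0$, the function $u$ lies in a weighted semiclassical Sobolev space adapted to the combined 1-cusp/scattering algebra on $\overline{\Omega}$, with no issue at $x=\bar x_0$.

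The next step is to show $Au=0$. The localizer $\tilde\chi$ is designed so that the geodesic segments contributing to $L\tilde\chi I$ are nearly tangent to the level sets of $x$. Near $x=\bar x_0$, the concavity of the foliation from the sublevel sets (Section~\ref{sec:geodesic-structure}) forces these near-tangent geodesics into $\{x\leq \bar x_0\}$; near $x=0$, the strict cutoff on $\lambda/(h^{1/2}x)$ coupled with the convex structure of the asymptotically conic geodesics from \eqref{eq:conic-bichar} likewise restricts the integration to geodesic segments lying inside $\overline{\Omega}$. Consequently every geodesic that $L\tilde\chi$ sees is a geodesic contained in $\overline{\Omega}$, on which $If$ vanishes by hypothesis; thus $L\tilde\chi If=0$, and
$$
Au \;=\; e^{-\Phi/h} L\tilde\chi I(e^{\Phi/h}u) \;=\; e^{-\Phi/h}\, L\tilde\chi If \;=\; 0.
$$

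By Theorem~\ref{thm:main-conic-ops}, $A$ is invertible on this weighted Sobolev space for $h$ sufficiently small, so $u=0$ and hence $f=0$. The main technical point to verify carefully is the matching of the Gaussian decay hypothesis on $f$ with the domain of $A$: the decay of $f$ near $x=0$ must dominate the exponential growth $e^{1/(2hx^2)}$ of the conjugation factor, which is exactly what ``sufficiently fast Gaussian decay'' buys once $h$ is taken smaller than both the invertibility threshold from Theorem~\ref{thm:main-conic-ops} and the reciprocal of the decay constant of $f$. A secondary, more bookkeeping-style point is to check that $\tilde\chi$ indeed only captures geodesics contained in $\overline{\Omega}$, which follows from the explicit form of $\tilde\chi$ in Section~\ref{sec:combined-sc-1c} together with the concavity argument above.
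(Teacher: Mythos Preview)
Your argument is correct and is precisely what the paper has in mind; the paper does not spell out a proof of this corollary, treating it as an immediate consequence of Theorem~\ref{thm:main-conic-ops}, and you have supplied the natural details.

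One small point to clean up: the direction of the dependence between $h$ and the Gaussian decay rate is stated backwards at the end. Near $x=0$ the conjugating factor is $e^{-\Phi/h}=e^{1/(2hx^2)}$, which grows \emph{faster} as $h\to 0$. So if $f\in e^{-C/x^2}L^2$, the requirement for $u=e^{-\Phi/h}f$ to land in the relevant Sobolev space is $1/(2h)\leq C$, i.e.\ $h\geq 1/(2C)$, not $h$ smaller than the reciprocal of the decay constant. The correct order of quantifiers is: first fix $h_0>0$ from Theorem~\ref{thm:main-conic-ops} (and small enough that the $\tilde\chi$-localized geodesics stay in $\overline{\Omega}$), then ``sufficiently fast Gaussian decay'' means $C>1/(2h_0)$, so that one may choose $h\in(1/(2C),h_0)$. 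With this adjustment your proof goes through as written.
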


As exmpained after Theorem~\ref{thm:main-conic}, this proves Theorem~\ref{thm:main-conic}.

\bibliographystyle{plain}
\bibliography{paperbib}

\end{document}